\title{Stable amalgamation over a predicate and the Gaifman property}
\author{Saharon Shelah\footnote{Publication no. 322b on Shelah's list of publications. Shelah thanks the Israel Science Foundation
   (Grants 1053/11, 1838/19), and the
   European Research Council (Grant 338821)
   for partial support of this research. Shelah is grateful to Craig Falls for funding typing services that were used during the work on the paper.}  and  Alexander Usvyatsov\footnote{Research partially supported by Marie Sklodowska Curie CIG 321915 "ModStabBan". Usvyatsov thanks the Austrian Science Foundation (FWF), projects P33895 and  P33420 for their partial support at different stages of this research.
    }}
\newcommand{\Addresses}{{
  \bigskip
  \footnotesize

 Saharon Shelah, \textsc{Mathematics Department,
Hebrew University of Jerusalem,
91904 Givat Ram, Israel}
 
 \medskip

Alexander Usvyatsov, \textsc{Institut f\"{u}r Diskrete Mathematik und Geometrie,
TU Wien,
1040 Vienna, Austria}

\medskip

}}
\newtheorem{theorem}{Theorem}[section]
\newtheorem{definition}[theorem]{Definition}
\newtheorem{example}[theorem]{Example}
\newtheorem{lem}[theorem]{Lemma}
\newtheorem{obs}[theorem]{Observation}
\newtheorem{co}[theorem]{Corollary}
\newtheorem{hyp}[theorem]{Hypothesis}
\newtheorem{remark}[theorem]{Remark}
\newtheorem{no}[theorem]{Notation}
\newtheorem{con}[theorem]{Conjecture}
\newtheorem{ft}[theorem]{Fact}
\renewenvironment{proof}{\noindent {\em Proof:}}{\hspace*{1cm}
        \hspace*{\fill}$\rule{1.2ex}{1.4ex}$\medskip} 
\newenvironment{re}{\begin{remark}\rm}{\end{remark}}
\newenvironment{de}{\begin{definition}\rm}{\end{definition}}
\newenvironment{fact}{\begin{ft}\rm}{\end{ft}}
\newcommand{\red}[1]{{\color{black}{#1}}}
\newcommand{\blue}[1]{{\color{black}{#1}}}
\newcommand{\gray}[1]{{\color{black}{#1}}}
\begin{document}
\baselineskip 24pt

\date{}

\maketitle

\abstract{We consider the following property of a first order theory T with a distinguished unary predicate P: every model of the theory of P occurs as the P-part of some model of T. We call this property the Gaifman property. Gaifman conjectured that if T is relatively categorical over P, then it has the Gaifman property. We propose a generalized version of this conjecture: if  T fails the Gaifman property, then it exhibits non-structure over P, i.e., has many non-isomorphic models over P in many cardinalities. We address this conjecture for countable theories. Motivated by ideas from  Classification Theory, we separate this conjecture into two parts:  1) stability over P (a structure property of theories) implies the Gaifman property, and 2) instability over P implies non-structure. In this paper prove the first part of this conjecture. In fact, we prove a stronger statement: an appropriate version of stability implies higher stable amalgamation properties.}

\section{Introduction}


In \cite{gaifman}, Gaifman has conjectured that, if a countable theory $T$ is categorical over a unary predicate $P$, then every model of the theory of $P$ ``occurs'' as the $P$-part of some model of $T$. We will refer to the latter property of theories
as the \emph{Gaifman property} over $P$; that is, we say that $T$ has the Gaifman  property over $P$ (or just \emph{$T$ has the Gaifman property}, when $P$ is clear from the context) when every model of the theory of $P$ is the $P$-part of some model of $T$. Gaifman \cite{Ga} proved this conjecture in case $T$ is \emph{rigid} over $P$ (see also \cite{benedikt-pred} for a recent ``effective'' version of this result). Hodges et al (e.g., \cite{Hod-cat1, Hod-cat2, Hod-cat3}) investigated Gaifman's conjecture and related questions for specific types of theories (an abelian group with a predicate picking out a subgroup, a pair of linear orders). 

The first author has proven an ``absolute'' version of this conjecture in \cite{Sh234}. Specifically, it follows from results there that if $T$ is \emph{absolutely categorical} over $P$ (the categoricity is preserved in forcing extensions), then $T$ has the Gaifman property over $P$. As far as we know, the general statement is still open. 

In this paper we formulate and address a much stronger version of Gaifman's Conjecture (which we refer to as the ``Generalized Gaifman's Conjecture''):

\begin{con}\label{con:gaifman1}
	Let $T$ be a countable complete theory, $P$ a distinguished unary predicate in its vocabulary. Assume that $T$ fails the Gaifman property. That is, assume that there exists a model $N$ of the theory of $P$ such that for no $M \models T$ is it the case that $P^M = N$. Then for every regular cardinal $\lam$ big enough, and every $\mu \ge \lam$, $T$ has $2^\lam$ models of cardinality $\mu$, which are non-isomorphic over $P$.  
\end{con}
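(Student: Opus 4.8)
The plan is to prove the conjecture through a structure/non-structure dichotomy, with \emph{stability over $P$} as the dividing line. First I would fix a notion of stability over $P$ that counts, for $M\models T$, the complete types $\mathrm{tp}(\bar a/P^M)$ realised by finite tuples $\bar a$ from $M$ (equivalently, in elementary extensions): say $T$ is \emph{$\kappa$-stable over $P$} if $|S^{<\omega}(P^M)|\le\kappa$ whenever $|P^M|\le\kappa$, and \emph{stable over $P$} if this holds for a tail of $\kappa$. The conjecture is then the composite of two implications: (Part~1) stability over $P$ implies the Gaifman property, and (Part~2) instability over $P$ implies non-structure over $P$. Since the hypothesis is that $T$ \emph{fails} the Gaifman property, the contrapositive of Part~1 yields that $T$ is unstable over $P$, and Part~2 then delivers the desired $2^\lambda$ models of size $\mu$ pairwise non-isomorphic over $P$. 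Part~1 is exactly the structure theorem established in this paper (stability over $P$ produces a stable amalgamation system over $P$, even higher $n$-dimensional amalgamation, from which every $N\models\mathrm{Th}(P)$ is realised as a $P$-part), so I would simply invoke it; the substance of the conjecture lies in Part~2, sketched below.

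For Part~2 I would first convert instability over $P$ into a combinatorial handle. By the standard local-character characterisation, failure of $\kappa$-stability over $P$ for unboundedly many $\kappa$ yields an \emph{order property over $P$}: a formula $\varphi(\bar x,\bar y)$ together with, for every $n$, tuples $\bar a_0,\dots,\bar a_{n-1}$ and parameters $\bar b_0,\dots,\bar b_{n-1}$ \emph{lying in $P$} such that $\varphi(\bar a_i,\bar b_j)$ holds iff $i<j$. The crucial feature is that the parameters witnessing the order are anchored in $P$: this is precisely what makes the instability be \emph{over $P$}, and it is what will make the resulting invariant stable under isomorphisms that fix $P$.

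Now the coding. I would fix, once and for all, a single $N\models\mathrm{Th}(P)$ of size $\lambda$ that is rich enough (say $\lambda$-saturated as a model of $\mathrm{Th}(P)$) to contain the parameters $\bar b_j$ and all the $\varphi$-configurations required. For each linear order $I$ of cardinality $\mu$ I would build, by compactness and the order property, a model $M_I\models T$ with $P^{M_I}=N$ \emph{exactly} and an $N$-indiscernible sequence $(\bar a_i:i\in I)$ whose $\varphi$-type over $N$ reads off the position of $i$ in $I$; the non-$P$ part thus codes $I$ over the fixed $P$-part $N$, and $\mu\ge\lambda$ is absorbed by taking $|I|=\mu$. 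The key invariance point is that, since $P^{M_I}=N$ is fixed, any isomorphism over $P$ is the identity on $N$ and must preserve every relation $\varphi(-,\bar b)$ with $\bar b\in N$; it therefore preserves the ordering that the $\varphi$-types over $N$ impose on the coded sequence. Hence the linear order recovered from $M_I$ (as the quotient of the indiscernible sequence by equality of $\varphi$-type over $N$) is an invariant of $M_I$ up to isomorphism over $P$ --- and note that ``over $P$'' is the \emph{weaker} isomorphism relation, so it distinguishes at least as many classes as plain isomorphism. It then remains to choose a family $\{I_S:S\subseteq\lambda\}$ of $2^\lambda$ linear orders of size $\mu$ that are pairwise inequivalent in the strong sense detected by this recovery --- for the unstable case one uses Shelah's unembeddable linear orders, or equivalently codes subsets of $\lambda$ into trees ${}^{\lambda>}2$ and applies the club-guessing/black-box machinery --- so that $M_{I_S}\cong_P M_{I_{S'}}$ forces $S=S'$, with $\lambda$ taken regular and above the relevant Hanf number so that the order property survives to the needed lengths.

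The hard part, and the reason Part~2 is left conjectural here while Part~1 is proved, is the passage from ``many linear orders'' to ``$2^\lambda$ models invariantly distinguished over $P$.'' Two points are delicate. First, one must guarantee the existence of $M_I$ with $P^{M_I}$ \emph{equal} to $N$ rather than merely containing it: the coded non-$P$ elements must be adjoined without enlarging the $P$-part, a relative-satisfiability question that has to be handled through the same relative framework used for Part~1, and is exactly the place where failure of Gaifman could in principle obstruct the construction. Second, and most seriously, the recovered invariant is only a \emph{linear order}, so turning an abundance of pairwise-inequivalent orders into the sharp count $2^\lambda$ of models non-isomorphic over $P$ demands the full non-structure combinatorics together with a proof that the recovery map is injective enough on the chosen family. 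Controlling this recovery uniformly across all $\mu\ge\lambda$, and ensuring the rigidity of the fixed $P$-part interacts correctly with the isomorphism-over-$P$ relation, is the main obstacle I expect to absorb the bulk of the work.
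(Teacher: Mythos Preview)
The statement you are attempting to prove is \emph{Conjecture}~\ref{con:gaifman1}: the paper does not prove it, so there is no ``paper's own proof'' to compare against. The paper explicitly presents this as open, proves only the structure half under a specific hypothesis, and restates the non-structure half as a conjecture at the very end. Your proposal is therefore not a proof but a programme, and it should be assessed as such.

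Your two-part decomposition matches the paper's strategy, but your choice of dividing line does not, and this creates a genuine gap between the two halves. You take ``stability over $P$'' to mean a single type-counting condition on $S^{<\omega}(P^M)$; this is essentially what the paper calls stability of \emph{models} over $P$ (Hypothesis~\ref{asm:2}), i.e.\ the case $n=1$. What the paper actually proves in Part~1 (Theorem~\ref{thm:main-intro2}) requires the much stronger hypothesis of \emph{$n$-stability for all $n<\omega$}: the union of every good $\mathcal{P}^-(n)$-system must be a stable set. Taking contrapositives, failure of Gaifman only yields ``$n$-unstable for \emph{some} $n$'', not the failure of your single stability notion. Your Part~2 sketch --- an order property with parameters in $P$ and Ehrenfeucht--Mostowski coding of linear orders --- is, at best, the $n=1$ case of what would be needed; it says nothing about the situation where models are stable over $P$ but some higher $\mathcal{P}^-(n)$-system is unstable. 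The paper is explicit that it views $n$-stability as giving rise to infinitely many distinct dividing lines, and its final conjectures are phrased accordingly (``$m$-stable for all $m<n$ but $n$-unstable''). So even granting everything you wrote, your two parts do not meet in the middle.

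There is a second, independent gap in your Part~2 construction that you partly flag but understate. You need to build $M_I\models T$ with $P^{M_I}=N$ \emph{exactly}, but the standing hypothesis is precisely that $T$ \emph{fails} the Gaifman property: there exist $N\models T^P$ that are not the $P$-part of any model. You cannot simply choose $N$ to be $\lambda$-saturated and appeal to compactness; compactness will happily enlarge the $P$-part. Controlling $P^{M_I}=N$ while simultaneously realising an arbitrary order-indiscernible sequence outside $P$ is a relative-existence problem of exactly the kind the paper's machinery (complete sets, $S_*$-types, local constructibility) is built to address --- and that machinery is only shown to work on the \emph{stable} side. On the unstable side you have no such tool, and your sketch offers none.
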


This conjecture states that, if a countable theory $T$ fails the Gaifman property, then it exhibits ``non-structure over $P$''. In other words, we believe that a much weaker version of ``structure'' for $T$ over $P$ (than categoricity) is enough for the Gaifman property. 

\smallskip

In the present paper, we break the above conjecture into two parts, and prove one of them. The motivation for our approach (and indeed for the above statement of the conjecture) comes from \emph{Classification theory}, a study of dichotomies in model theory, developed by the first author (e.g., \cite{Sh:c}). Broadly speaking, classification theory is a programme of search for \emph{dividing lines}. A dividing line is a property of first order theories (or, more generally, of classes of models), such that theories that have this property (the ones falling on the ``structure'' side of the dividing line) have various ``positive structure'' properties, whereas theories that fail the property in questions (fall on the ``non-structure'' side of the dividing line) exhibit ``non-structure''. The intuition is such dividing lines separate, in some sense, between ``classifiable'' theories (whose models can be described using a ``reasonable'' collection of invariants, e.g., dimensions), and ``non-classifiable'' ones, where such classification is impossible. 

One of the most fundamental and best studied dividing lines is \emph{stability}, introduced (first in classical model theory) by the first author, generalizing the work and ideas of Morley \cite{Mor}. If (a countable theory) $T$ is unstable, it has the maximal number of non-isomorphic models (and even $\aleph_1$-saturated models) in any uncountable cardinality. On the other hand, if $T$ is stable, it has a very well-behaved notion of independence, that leads to good notions of dimension, and many other positive structural properties. These classical results appear in \cite{Sh:c}.

It is our hypothesis that the appropriate notion of stability also plays a central role in the study of model theory over a predicate. However, we believe that, unlike in the classical (first order) context, stability over a predicate gives rise to more than one (indeed, infinitely many) dividing lines. We will make this idea of  ``$\omega$ levels of classification (introduced by the first author in \cite{Sh234}) of  ``$n$-stability over $P$'' for $n<\om$ more precise in the next section.

One goal of this work is to investigate \emph{structure} consequences for theories that fall on the positive side of all these dividing lines. In particular, we  prove the following:

\begin{theorem}
 \label{thm:main-intro}
 Let $T$ be a countable complete theory, $P$ a distinguished unary predicate in its vocabulary, such that $P$ is ``very stably embedded'' (by which we mean that $P$ is stably embedded, and every subset of $P$ definable in $T$, is definable already in $T^P$, the theory of $P$). Assume that $T$ is ``$n$-stable'' for all $n<\om$. Then $T$ has the Gaifman property. 
\end{theorem}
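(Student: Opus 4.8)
\emph{Proposed approach.} Fix a model $N\models T^P$; the goal is to produce $M\models T$ with $P^M=N$. The plan is to build $M$ as the union of an increasing elementary chain $\langle M_i : i\le\alpha\rangle$ of models of $T$ whose $P$-parts form a continuous increasing elementary chain of models of $T^P$ climbing from a small canonical one up to $N$. Two general observations make the bookkeeping painless: first, if $M_i\preceq M_j$ then $P^{M_i}\preceq P^{M_j}$ as models of $T^P$ (relativise to $P$: for a tuple $\bar a$ from $P^{M_i}$ and a formula $\varphi$ in the language of $T^P$, $P^{M_i}\models\varphi(\bar a)$ iff $M_i\models\varphi^{P}(\bar a)$ iff $M_j\models\varphi^{P}(\bar a)$ iff $P^{M_j}\models\varphi(\bar a)$); and second, at a limit the $P$-part of $\bigcup_j M_j$ is $\bigcup_j P^{M_j}$. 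So all of the content is in the successor step of the chain and in making the successor steps cohere.

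The construction has two ingredients. The first is a starting model: I would show that the prime (atomic) model $N_0$ of $T^P$ is realized as a $P$-part, say $N_0=P^{M_0}$ for some $M_0\models T$ --- this should follow from very stable embeddedness together with low-level stability over $P$, since $N_0$, being atomic, is the ``cheapest'' model of $T^P$ and $T$ cannot, by very stable embeddedness, impose extra definable structure on it --- and then, since $N_0$ embeds elementarily into $N$, arrange $P^{M_0}=N_0\preceq N$. The second and crucial ingredient is a one-step extension lemma. Using prime (constructible) extensions in $T^P$, present $N$ over $N_0$ as a continuous increasing elementary chain $N_0\preceq N_1\preceq\cdots\preceq N_\alpha=N$ with each $N_{i+1}$ prime over $N_i$ together with one new element; the lemma asserts that whenever $M\models T$ has $P^M=N_i$, there is a stable extension $M'\succeq M$ over $P$ with $P^{M'}=N_{i+1}$. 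Granting the lemma, build $\langle M_i : i\le\alpha\rangle$ by recursion --- successors by the lemma, limits by unions (continuity of the $N_j$'s keeps $P^{M_i}=N_i$) --- so that $M:=M_\alpha$ has $P^M=N$, as required.

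The hard part, and the point at which the full hypothesis ``$n$-stable over $P$ for all $n<\om$'' is genuinely used, is the one-step lemma together with the coherence needed to pass through limit ordinals. One must (a) produce a ``free'' extension of $M$ realizing exactly the prescribed larger $P$-part and nothing more --- here ``very stable embeddedness'', and in particular that every $T$-definable subset of $P$ is already $T^P$-definable, is essential, as it rules out $T$ imposing hidden constraints that force unwanted elements into $P^{M'}$ --- and (b) arrange that the free extensions chosen at successive stages are mutually compatible, so that their unions at limit stages remain models of $T$ with the correct $P$-parts. Unlike the classical first-order stable case, where $2$-amalgamation over models comes for free, in the over-$P$ setting guaranteeing both the existence of a well-behaved free extension and this compatibility requires the whole tower of finite ($n$-dimensional) stable amalgamation properties; deriving these higher stable amalgamation properties from $n$-stability over $P$ for all $n$ is exactly the main technical work of the paper, which this theorem then invokes.
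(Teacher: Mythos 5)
There is a genuine gap, in two places. First, your filtration of $N$ is not available in general: you propose to start from the prime (atomic) model $N_0$ of $T^P$ and to present $N$ over $N_0$ as a chain in which each $N_{i+1}$ is prime over $N_i$ together with one new element. But $T^P$ is an arbitrary countable complete theory here; it need not have an atomic model, let alone prime models over subsets, so neither the base point nor the one-element prime steps exist. The paper avoids this entirely: the chain decomposition is by cardinality via L\"owenheim--Skolem (Theorem \ref{small}), the base case is the \emph{countable} case, and there the model over a countable stable complete set is built element by element using \emph{locally isolated} types (Lemmas \ref{lem:li} and \ref{lem:count-li}). Local isolation over a complete set automatically puts the type in $S_*$ (Lemma \ref{lem:isolated_star}), which is what guarantees ``nothing new falls into $P$''; this is a consequence of stability over $P$ of the relevant set, not of atomicity in $T^P$ nor of very stable embeddedness ``ruling out hidden constraints'' as you suggest.

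Second, and more seriously, the entire content of the theorem is deferred rather than proven: your one-step extension lemma (a model $M$ with $P^M=N_i$ extends to $M'$ with $P^{M'}=N_{i+1}$) is exactly $2$-existence, and you close by saying that producing it, together with the coherence needed at limits, ``is exactly the main technical work of the paper, which this theorem then invokes.'' Since the statement you are proving \emph{is} the culmination of that work, this is circular as a proof. What the paper actually does (Theorem \ref{2.5}) is a simultaneous induction on the cardinality $\lambda$ of a good $\cP^-(n)$-system, for all $n$ at once: a $\cP^-(n)$-system of size $\lambda$ is filtered into a continuous chain of smaller good subsystems; consecutive members of the chain assemble into a good $\cP^-(n+1)$-system (clause (e) of Theorem \ref{small}); and the successor step is carried out by applying the induction hypothesis to that $(n+1)$-dimensional system at smaller cardinality, with $n$-stability for all $n$ entering through clause (vi) of Definition \ref{system} to make the countable base case (local isolation) available. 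Your intuition that the whole tower of higher amalgamation is needed, and that properties transfer from $(n+1)$-systems in $\lambda$ to $n$-systems in $\lambda^+$, matches the paper's own motivating discussion; but the proposal supplies no argument for why $n$-stability yields $n$-existence, which is the theorem's actual content.
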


The next section we discuss the  setting of this paper, the notion of $n$-stability mentioned in the statement of the Theorem above, our general approach, and the main technical tools.  At the end, we will be able to state a stronger version of Theorem \ref{thm:main-intro} that is actually proven in this paper, and explain the connections with previous work. 

\section{Background: the existence property and higher amalgamation}

\subsection{Background and setting}

From now on, $T$ will be a (fixed) complete first order theory, and $P$ a distinguished unary predicate in the vocabulary of $T$. For simplicity, we assume that the vocabulary of $T$ has no function symbols, and that $T$ implies that $P$ is infinite.

Let $\mathcal{C} $ be the monster model of $T$. From now on, we assume that all models of $T$ are elementary submodels of $\mathcal{C}$, and all sets are subsets of $\mathcal{C}$. \medskip

For a set $A$, we denote $P^A = A \cap P^\cC$. We also denote by $\cC^P$ the substructure of $\cC$, which (as a set) equals $P^\cC$, and we let $T^P$ be the theory of $\cC^P$. 

Throughout the paper, we make the following ``structure'' assumptions:


\begin{hyp}
 \label{hyp:1} (\underline{Hypothesis 2}).
 
\begin{enumerate}
 \item $P$ is stably embedded.
 \item Every definable subset of $P^\cC$ is already definable in $T^P$. 
\end{enumerate}
\end{hyp}


We may assume these without loss of generality while investigating theories that do not exhibit ``non-structure''. See a detailed discussion on this in \cite{ShUs322a}. The proofs of the relevant non-structure results appear in \cite{PiSh130}. Because of clause (ii) of Hypothesis \ref{hyp:1} above, we may also assume that $T$ has been Morley-ized (hence has quantifier elimination, even down to the level of predicates). Again, see \cite{PiSh130} or \cite{ShUs322a} for a more detailed discussion.  

\smallskip 

Clearly, $\cC^P$ can be viewed as the monster model of $T^P$. 

When no confusion should arise, we will write $P$ for $P^\cC$. Also, for a set $A$, we will often denote by $A$ both the set and the substructure of $\cC$ with universe $A$. So for example, when we write that $A\cap P^\cC$ is $\lam$-saturated, or just that $A\cap P$ is $\lam$-saturated, we mean that the substructure of $\cC$ with universe $P^A$ is a $\lam$-saturated model of the appropriate theory (if $A \cap P \prec P$, which will be the case in this paper, then the appropriate theory is $T^P$). 

For two sets $A, A'$, we will write $A \equiv A'$ if $A$ and $A'$ are (universes of) elementarily equivalent substructures of $\cC$. 

\subsection{Basic definitions}

We recall some basic concepts from \cite{PiSh130}, \cite{ShUs322a}, and \cite{Us24}. 

\begin{de}\label{de:existence}
\begin{enumerate}
\item 
	We say that a set $A$ has the \emph{existence property over $P$}, or simply the \emph{existence property} if there exists $M \models T$ such that $A \subseteq M$ and $P^M = P^A$. 
\item
	We say that $T$ has the \emph{Gaifman property} if every $N \models T^P$ has the existence property. 
\end{enumerate}
\end{de}

In order for a set $A$ to have the existence property, $P^A$ should be ``suitable'' for being the $P$-part of a model of $T$. For example, $P^A$ should obviously be itself a model of $T^P$; moreover, $P^A$ has to be ``closed enough''. 


This closedness condition is summarized in the following definition.


\begin{de}\label{6}\label{dfn:complete}
$A\subseteq {\cal C}$ is called {\it complete} if for every
formula $\psi(\bar x,\bar y)$ and $\bar b\subseteq A, \models
(\exists\bar x\in P)\psi(\bar x,\bar b)$ implies $(\exists
\bar a\subseteq P\cap A)\models \psi(\bar a,\bar b)$. 
\end{de}

%
%
%


The following is clear:

\begin{obs}\label{obs:complete}
If $M\prec {\cal C}$ and $P^M\subseteq A\subseteq M$, then
$A$ is complete.
\end{obs}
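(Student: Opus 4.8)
The plan is to unwind the definition of completeness and use nothing beyond the elementarity of $M$ inside $\cC$. Fix a formula $\psi(\bar x,\bar y)$ and a tuple $\bar b\subseteq A$; since $A\subseteq M$ we have $\bar b\subseteq M$. Assume $\cC\models(\exists\bar x\in P)\psi(\bar x,\bar b)$. This is a first-order assertion about the tuple $\bar b$, so by $M\prec\cC$ it holds in $M$ as well, and therefore there is a tuple $\bar a\subseteq P^M$ with $M\models\psi(\bar a,\bar b)$, hence $\cC\models\psi(\bar a,\bar b)$ again by elementarity.

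It remains to observe that $\bar a$ lies in $P\cap A$ as required. Indeed, $P^M=P^\cC\cap M\subseteq P^\cC$, and by hypothesis $P^M\subseteq A$, so $P^M\subseteq P^\cC\cap A=P^A$; in particular $\bar a\subseteq P\cap A$. This exhibits the witness demanded by Definition \ref{dfn:complete}, so $A$ is complete. There is no real obstacle here: the only point to be careful about is that the existential quantifier ranging over $P$ is itself expressible in the first-order language (true since $P$ is a predicate symbol), so that the transfer between $M$ and $\cC$ is legitimate in both directions.
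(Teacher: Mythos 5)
Your proof is correct and is exactly the routine elementarity argument the paper has in mind (the paper states this observation without proof, calling it clear). The witness $\bar a\subseteq P^M\subseteq A$ obtained by transferring the bounded existential between $M$ and $\cC$ is precisely what Definition \ref{dfn:complete} requires.
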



This confirms that in order for a set $A$ to have the existence property, it has to be complete. In this paper we are interested in the converse.

The following useful characterization offers another understanding of the notion of completeness (see Observation 4.2 in \cite{ShUs322a}):

\begin{fact}
\label{obs:complete_characterization}
A set $A$ is complete if and only if for every $\bar a\subseteq A$ and
$\psi(\bar x,\bar y)$  the $\psi$-type $tp_\psi (\bar a/P^{\cal C})$ is definable
over $A\cap P^{\cal C}$ and $A\cap P^{\cal C}\prec P^{\cal C}$.
\end{fact}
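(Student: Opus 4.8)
The plan is to prove the two implications of the biconditional separately, each time reducing everything to Definition~\ref{dfn:complete} together with the stable embeddedness of $P$ (Hypothesis~\ref{hyp:1}(i)). I will also use the Morley-ization convention, which, in the presence of Hypothesis~\ref{hyp:1}(ii), says that any formula in the vocabulary of $T$ all of whose variables are restricted to range over $P^{\cC}$ and all of whose parameters lie in $P^{\cC}$ is $T^P$-equivalent to a $T^P$-formula over the \emph{same} parameters.

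\textbf{The implication $(\Rightarrow)$ (completeness gives the characterization).} Assume $A$ is complete. That $A\cap P^{\cC}\prec P^{\cC}$ I would get from the Tarski--Vaught test: if $\varphi(x,\bar b)$ is a $T^P$-formula with $\bar b\subseteq A\cap P^{\cC}$ and $P^{\cC}\models(\exists x)\varphi(x,\bar b)$, then relativizing the quantifiers of $\varphi$ to $P$ rewrites this as $\models(\exists x\in P)\varphi'(x,\bar b)$ with $\varphi'$ a formula in the vocabulary of $T$ and $\bar b\subseteq A$, so Definition~\ref{dfn:complete} yields a witness in $P\cap A$; applying Definition~\ref{dfn:complete} to the formula ``$x=x$'' also gives $P\cap A\neq\emptyset$, so (there being no function symbols) $A\cap P^{\cC}$ is a nonempty elementary substructure of $P^{\cC}$. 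For the type-definability clause, fix $\bar a\subseteq A$ and $\psi(\bar x,\bar y)$. By stable embeddedness the set $\{\bar c\subseteq P^{\cC}:\ \models\psi(\bar a,\bar c)\}$ is defined by a formula $\varphi(\bar y,\bar d)$ with $\bar d\subseteq P^{\cC}$; regarding the last coordinate as a parameter slot, put $\rho(\bar z,\bar a):=(\forall\bar c)\,((\bar c\in P)\to(\psi(\bar a,\bar c)\leftrightarrow\varphi(\bar c,\bar z)))$. The crucial observation is that $\rho(\bar z,\bar a)$ is a formula over parameters $\bar a\subseteq A$, and $\models\rho(\bar d,\bar a)$, so $\models(\exists\bar z\in P)\rho(\bar z,\bar a)$; Definition~\ref{dfn:complete} then supplies $\bar d'\subseteq P\cap A$ with $\models\rho(\bar d',\bar a)$, and $\varphi(\bar y,\bar d')$ is a definition of $tp_\psi(\bar a/P^{\cC})$ over $A\cap P^{\cC}$.

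\textbf{The implication $(\Leftarrow)$ (the characterization gives completeness).} Assume $A\cap P^{\cC}\prec P^{\cC}$ and that for every $\bar a\subseteq A$ and $\psi$ the type $tp_\psi(\bar a/P^{\cC})$ is definable over $A\cap P^{\cC}$. Given $\psi(\bar x,\bar y)$ and $\bar b\subseteq A$ with $\models(\exists\bar x\in P)\psi(\bar x,\bar b)$, I would apply the definability assumption to the tuple $\bar b$ and the formula $\chi(\bar y,\bar x):=\psi(\bar x,\bar y)$, so that the variable destined to range over $P^{\cC}$ now sits in the parameter slot. This produces a formula $\theta(\bar x)$ over $A\cap P^{\cC}$ with $\models\psi(\bar c,\bar b)\iff\models\theta(\bar c)$ for all $\bar c\subseteq P^{\cC}$; by the Morley-ization convention I may take $\theta$ to be a $T^P$-formula, still with parameters in $A\cap P^{\cC}$. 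Since $\models(\exists\bar x\in P)\psi(\bar x,\bar b)$, some $\bar c\subseteq P^{\cC}$ satisfies $\psi(\bar c,\bar b)$ and hence $\theta(\bar c)$, so $P^{\cC}\models(\exists\bar x)\theta(\bar x)$; because $A\cap P^{\cC}\prec P^{\cC}$ there is $\bar a\subseteq P\cap A$ with $P^{\cC}\models\theta(\bar a)$, whence $\models\psi(\bar a,\bar b)$ --- exactly the conclusion of Definition~\ref{dfn:complete}.

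\textbf{The main obstacle.} I expect the only genuinely nonroutine point to be the ``crucial observation'' in the $(\Rightarrow)$ direction: that the set of valid $\psi$-definition parameters for $tp_\psi(\bar a/P^{\cC})$ is a $\cC$-definable subset of $P^{\cC}$ over parameters drawn from $A$, so that it falls precisely within the scope of Definition~\ref{dfn:complete}. A secondary point that must be handled with care throughout is the passage between formulas in the vocabulary of $T$ and $T^P$-formulas \emph{with the same parameters} --- this is exactly what the Morley-ization convention together with Hypothesis~\ref{hyp:1}(ii) delivers, and it is what makes it legitimate, in the $(\Leftarrow)$ direction, to apply the elementarity $A\cap P^{\cC}\prec P^{\cC}$ to the formula $\theta$.
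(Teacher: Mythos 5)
This Fact is imported without proof --- the paper quotes it as Observation 4.2 of \cite{ShUs322a} --- so there is no in-paper argument to measure yours against. On its own terms your proof is correct and is the natural one: the forward direction is exactly the mechanism behind Observation \ref{obs:defcomplete} (pull the uniform definition $\Psi_\psi(\bar y,\bar d)$ from stable embeddedness, observe that ``$\bar z$ is a valid definition parameter for $tp_\psi(\bar a/P^{\cal C})$'' is itself a formula over $\bar a$ with $\bar z$ ranging over $P$, and use completeness to move $\bar d$ into $A\cap P^{\cal C}$), plus a Tarski--Vaught argument for $A\cap P^{\cal C}\prec P^{\cal C}$; the converse reverses the roles of subject and parameter variables and then uses elementarity of $A\cap P^{\cal C}$ to find the witness. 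You correctly isolate the one delicate point: a $\cC$-definable subset of $P^{\cal C}$ with parameters in $A\cap P^{\cal C}$ must be replaced by a $T^P$-formula over the \emph{same} parameters before $A\cap P^{\cal C}\prec P^{\cal C}$ can be applied (quantifiers in a $T$-formula range over all of ${\cal C}$, not just $P^{\cal C}$). That step is not free from Hypothesis \ref{hyp:1}(ii) alone --- (ii) only asserts definability in $T^P$ over \emph{some} parameters --- but it is precisely what the paper's Morley-ization convention (justified via \cite{PiSh130}, \cite{ShUs322a}) packages up, so invoking that convention, as you do, is legitimate in this setting.
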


Now we recall the definitions of relevant types and stability. Note that these notions are only defined over complete sets.

\begin{de}\label{6.5}\label{dfn:startypes}

Let $A$ be a complete set. 

\begin{itemize}

\item[(i)]  Denote $$S_*(A)=\{tp(\bar c/A):P\cap (A\cup \bar c)=P\cap A {\rm\ and}\ A\cup \bar
c\ {\rm is\ complete}\}$$
\item[(ii)] $A$ is \emph{stable over $P$}, or simply \emph{stable}, if for all $A^\prime$
with $A^\prime\equiv A$, we have $|S_*(A^\prime)|\leq |A^\prime|^{|T|}$.
\end{itemize}
\end{de}

\begin{remark}
\begin{enumerate}
\item Even though ``stability over $P$'' is a more appropriate and accurate name for our notion of stability of a set (and the term ``stable set'' exists in literature, and has a different meaning), since we  have only one notion of stability in this article (stability over $P$), we will sometimes omit ``over P'' and simply write ``stable''. 
\item We will refer to types in  $S_*(A)$ as \emph{complete types over $A$ which are weakly orthogonal to $P$}. 
\end{enumerate}

\end{remark}

\subsection{Finite independent systems of models and higher amalgamation properties}

The main technical tool in this paper is so-called ``good independent systems of models'', or just \emph{good systems}.  Informally, such a system corresponds to a collection of models of $T$ and $T^P$ that are ``independent from each other''.  A formal definition (which is somewhat long and quite technical) appears in section 5. The main idea is that, given a partial ordered set $I$, an $I$-system is a collection of sets $A_u$ for $u \in I$, satisfying a collection of conditions (some very natural, some a bit more technical). In our case, we will always assume that $I \subseteq \cP(n)$ for some $n$, where $\cP(n) = \cP(\set{0, \ldots  n-1}) = \set{u \colon u \subseteq \set{0, \ldots, n-1}}$. This will allow us to distinguish between elements of the system that are models of $T$ and those that are models of $T^P$. Specifically, we will (quite arbitrarily, but this turns out to be a convenient choice) assume that if $0 \notin u$, then $A_u \models T^P$, otherwise $A_u \models T$. Moreover, we assume that $P^{A_u} = A_{u \setminus\{0\}}$. 

We will be most interested in good $\cP(n)$-systems (i.e., $I = \cP(n)$) and $\cP^{-}(n)$-systems, where $\cP^{-}(n)=\cP(n)\setminus \set{n}$. For simplicity of notation, in this section, we shall refer to the former as an $n$-system, and the latter as an  $n^-$-system (we avoid the use of this shortcut later in the paper). 

Note that:

\begin{itemize}
\item
A $1$-system consists of $A_{\emptyset}$ and $A_{\set{0}}$, where $A_{\set{0}} \models T$ and $A_\emptyset = P^{A_{\set{0}}}$ 
\item
A $1^-$-system consists only of $A_\emptyset \models T^P$ 
\item
A $2^-$-system consists of $A_{\emptyset}, A_{\set{1}} \models T^P$ and $A_{\set{0}}$ with $A_\emptyset = P^{A_{\set{0}}}$ 

\end{itemize}

Additional requirements on a $2^-$-system demand, for example, that $A_\emptyset \prec A_{\{1\}}$, and that $A_{\{0\}}$ is ``independent from $A_{\{1\}}$ over $A_\emptyset$ (in this specific case this just means non-forking independence: for every $\a \in A_{\{0\}}$, the type $\tp(\a/A_{\{1\}})$ is definable over $A_\emptyset$, and is ``automatic'', since $P$ is stably embedded, and $A_{\{0\}}$ is a model, hence a complete set). in a $2$-system, one also has a model $A_2 = A_{\set{0,1}} \models T$, which contains all the above sets, and, indeed, $A_{\set{0}} \prec A_2$, and $P^{A_2} = A_{\set{1}}$.

In the case of $n=3$, the requirements are already more technical, and we will not discuss them here (see section 5 for a general definition). 

Sometimes one wants to discuss a particular type of system, where, for example, the model $A_n$ is always atomic, constructible, or saturated over the ``smaller'' sets. For example, in \cite{Sh234}, the first author studied ``u.l.a.'' systems, i.e., systems in which $A_n$ is \emph{uniformly locally atomic} (u.l.a.) over the union of the appropriate $n^-$-system (see subsection \ref{sub:previous}). 

The property of $n$-stability mentioned in Theorem \ref{thm:main-intro}, corresponds to the fact that every $n^-$-system (more precisely, the union of every $n^{-}$-system) is a complete set stable over $P$. We say that $T$ has $n$-existence if every $n^-$-system can be ``completed'' to an $n$-system. What we really prove in the Main Theorem in section 6 is that $n$-stability for all $n$ implies $n$-existence for all $n$. Moreover, the model $A_n$ can, in this case, be taken to be locally constructible over the union of the $n^-$-system. 

In particular, we obtain the following:

\begin{theorem}
 \label{thm:main-intro2}
 Let $T$ be a countable complete theory, $P$ a distinguished unary predicate in its vocabulary, such that $P$ is ``very stably embedded'' (as in Theorem \ref{thm:main-intro})). Assume that $T$ is ``$n$-stable'' over $P$ for all $n<\om$. Then the union of every $n^-$-system has the existence property. 
\end{theorem}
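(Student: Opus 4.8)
The plan is to prove Theorem~\ref{thm:main-intro2} by induction on $n$, showing simultaneously that $n$-stability for all $n$ implies $n$-existence for all $n$, and that the witnessing model $A_n$ can be taken to be locally constructible over the union of the given $n^-$-system. The base cases $n=1,2$ are essentially already recorded in the discussion above: a $1^-$-system is just a model $A_\emptyset \models T^P$, and its completion to a $1$-system amounts exactly to the existence property for that model; a $2^-$-system is completed by non-forking amalgamation over $P$, which is available because $P$ is stably embedded. So the real content is the inductive step.

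For the inductive step, suppose we are given an $n^-$-system $\bar A = \langle A_u : u \in \cP^-(n)\rangle$ and we want to produce the top model $A_n = A_{\set{0,\dots,n-1}} \models T$ with $A_{\set{0}} \prec A_n$, $P^{A_n} = A_{\set{1,\dots,n-1}}$ extending the rest of the system compatibly. First I would use the inductive hypothesis, applied to the various sub-$(n-1)$-systems sitting inside $\bar A$, together with Hypothesis~\ref{hyp:1}, to arrange that the $P$-part $A_{\set{1,\dots,n-1}}$ is already a sufficiently saturated model of $T^P$ (or to replace the system by one in which it is — here one has to be careful to do this without destroying independence, which is where the "very stably embedded" assumption earns its keep, since it lets us control definable subsets of $P$ purely inside $T^P$). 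Next, by $n$-stability, the union $B := \bigcup_{u \in \cP^-(n)} A_u$ is a complete set which is stable over $P$; completeness means, by Fact~\ref{obs:complete_characterization}, that every $\psi$-type over $P^\cC$ realized in $B$ is definable over $P^B = A_{\set{1,\dots,n-1}}$, and that $P^B \prec P^\cC$. The task is then to build a model $A_n$ with $B \subseteq A_n$ and $P^{A_n} = P^B$, i.e., to give $B$ the existence property, and moreover to do so by a local construction.

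The construction of $A_n$ over $B$ is the technical heart. I would build $A_n = \bigcup_{i<\alpha} b_i$ as an increasing continuous chain, adding elements $b_i$ one at a time so that $\tp(b_i / B \cup \set{b_j : j<i})$ is isolated (locally, i.e., by a single formula, since $T$ is Morley-ized) and, crucially, so that the $P$-part is never enlarged: each new element is chosen to realize a complete type weakly orthogonal to $P$ in the sense of Definition~\ref{dfn:startypes}, so it contributes no new points of $P^\cC$. Stability over $P$ (of $B$, and inductively of all the intermediate sets, which requires checking that the property of being a complete set stable over $P$ is preserved along the construction) guarantees that $S_*$ of each intermediate set has size controlled by its cardinality, which is what makes the local construction go through: there are enough isolated types to exhaust a model while keeping $P$ fixed, exactly as in the classical theory of constructible/atomic models, but relativized to $S_*$. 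One must also verify at the end that $A_n$ together with $\bar A$ forms a genuine $n$-system — that all the independence and ``$\prec$'' requirements of the (section~5) definition hold — which follows because isolated types over $B$ are, in particular, not forking over the relevant sub-systems.

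The main obstacle, I expect, is not the construction of $A_n$ per se but the bookkeeping needed to keep stability over $P$ (and completeness) invariant throughout the induction and throughout the internal chain construction — in particular, showing that when one restricts an $n^-$-system to an $(n-1)$-subsystem, or enlarges the $P$-part to make it saturated, the resulting configuration is still a good system to which the inductive hypothesis and $n$-stability apply. A secondary difficulty is making sure the local construction genuinely produces a \emph{model} of $T$ (all existential formulas over $A_n$ with no $P$-quantifiers get witnessed) rather than merely a complete set; this is where one needs that the types being added range over \emph{all} of $S_*$ of the current set and not just a cofinal family, and where the hypothesis that $T$ has no function symbols and is Morley-ized simplifies the closure conditions.
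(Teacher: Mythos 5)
Your overall target (reduce to: the union $B$ of the system is a complete set stable over $P$, then build a locally constructible model over $B$ without enlarging $P$) matches the paper, and your local mechanism (adding realizations of locally isolated types in $S_*$, via Lemmas \ref{lem:li} and \ref{lem:isolated_star}) is the right tool at the countable level. But the global induction is wrong, and the gap sits exactly where you wave at ``bookkeeping''. First, your base cases are not available: completing a $1^-$-system to a $1$-system \emph{is} the Gaifman property for that model of $T^P$ --- the hardest instance of the theorem, not something already recorded --- and completing a $2^-$-system already requires the existence property for the complete set $A_{\set{0}}\cup A_{\set{1}}$, not merely nonforking amalgamation; so an induction on $n$ has nothing to stand on. Second, and more seriously, your transfinite chain with $\tp(b_i/B\cup\set{b_j:j<i})$ locally isolated cannot be continued past stage $\omega$ when $B$ is uncountable: to find the next locally isolated type in $S_*$ you need the current set to be \emph{stable} over $P$ (Lemma \ref{lem:li}), and the paper only knows that stability survives adding a \emph{finite} tuple (Lemma \ref{lem:fin-stable}). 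Stability of $B\cup\set{b_j:j<i}$ for infinite $i$ is not a consequence of $n$-stability of good systems --- that would essentially be the ``full stability'' hypothesis of \cite{Us24}, which this paper is deliberately avoiding.

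The missing idea is the paper's induction on the \emph{cardinality} $\lambda$ of the system, carried out simultaneously for all $n$ (Theorem \ref{2.5}). One filters the given $\cP^-(n)$-system as an increasing continuous chain of good subsystems $\langle A^s_\alpha : s\in I\rangle$ of smaller size, and the key structural fact (Theorem \ref{45}, clause (e)) is that two consecutive levels $\alpha<\alpha+1$ of this filtration assemble into a good $\cP^-(n+1)$-system. The top model is then built as a continuous chain $A^{\set{0,\dots,n-1}}_\alpha$ in which each successor step is an application of the induction hypothesis to that $(n+1)^-$-system of \emph{smaller} cardinality, with Fact \ref{47} ensuring that types over the small approximations imply the types over the full union. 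Only the countable case is handled by your direct $\omega$-construction (Lemma \ref{lem:count-li}), where every intermediate set is $B$ plus a finite tuple and Lemma \ref{lem:fin-stable} applies. This trade-off (lower cardinality against higher $n$) is precisely why the theorem needs $n$-stability for \emph{all} $n$ even to conclude $1$-existence; your proposal never invokes $m$-stability for any $m>n$, which is a sign the argument cannot close.
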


So Theorem \ref{thm:main-intro}, as stated in the introduction, is a particular case of the Theorem \ref{thm:main-intro2} where (in the conclusion) $n$ is taken to be equal to $1$. 


The property of $n$-existence can also be interpreted as an appropriate  ``stable $n$-amalgamation'' property: every $n^{-}$-system of models can be amalgamated (i.e., embedded into one big model) without increasing the $P$-part. 

\subsection{Why systems?}

One could ask: why bother with $n$-systems, if all one is interested is the Gaifman property (the case $n=1$)? There are several potential answers to this question.

 First, we do not know a general proof of the Gaifman property that does not go through a proof of a stronger result (the existence property for a larger class of complete sets). 
 
 Second, $n$-systems are interesting objects in their own right, and $n$-existence, which (as we have mentioned above) in a way corresponds to a higher amalgamation property, can be quite useful, both in theoretical model theoretic arguments, and in applications to particular theories (in the same way as \emph{excellence} in abstract elementary classes can be used in particular examples to obtain interesting consequence, as is done in e.g. \cite{Kirby2013}). Problems related to  ``higher amalgamation'' ($n$-existence) have generated much interesting research, with connections to e.g., algebraic topology (\cite{hrushovski2024groupoidsimaginariesinternalcovers, Goodrick_Kolesnikov_2010, goodrick2011amalgamationfunctorshomologygroups, Goodrick_Kim_Kolesnikov_2013}, to only name a few).

Third, we believe that $n$-stability is a ``true'' dividing line for all $n<\om$ in the following sense: we believe that one should be able to prove, assuming that $T$ is $n$-stable (over $P$), but $n+1$-unstable, that $T$ exhibits non-structure over $P$ (and this will hopefully be addressed in subsequent work). 

Finally, systems are a convenient and useful tool. Specifically, working with $n$-systems allows proofs by induction on $\lambda$, the cardinality of the system. 

We would like to (informally) illustrate this on the following ``baby'' example. 

Let's say we are interested in the Gaifman property (which is essentially equivalent to the problem of $1$-existence). That is, given a model $N \models T^P$, we ask whether there is $M\models T$ with $P^M = N$. In the language of systems, given a $1^-$-system, we ask whether it can be extended to a $1$-system. 

Let's say that we somehow already know that $T$ has $1$-existence and $2$-existence in cardinality $\lam$; and let us  deduce by an inductive argument that it has $1$-existence in $\lambda^+$.  Given a model $N \models T^P$ of cardinality $\lam^+$, we write it as an increasing union $N_i$ of models of $T^P$ of cardinality $\lam$. By $1$-existence in $\lambda$, there is $M_0 \models T$ of cardinality $\lambda$ whose $P$-part is $N_0$. By 2-existence in $\lambda$, there is $M_1 \models T$ of cardinality $\lambda$, containing $M_0$ and $N_1$, whose $P$-part is $N_1$. Continuing by induction, we obtain an increasing chain $M_i \models T$ whose $P$-part is $N_i$, and their union $M$ will be the desired model of $T$ of cardinality $\lambda^+$, whose $P$-part is $N$. 

In fact, this is a general phenomenon that makes systems so incredibly useful: they allow inductive transfer of properties between  cardinalities. It is very often the case that a property for $n+1$-systems in $\lambda$ implies the same property for $n$-systems in $\lambda^+$. This exactly is the case for $n$-existence. 
The proof of the Main Theorem (Theorem \ref{thm:main-intro2}) uses a version of an inductive argument described above (for all $n<\om$ at once), and the definition of a system, that may appear quite technical at first (and second) glance, is designed to make arguments of this type work.

\subsection{Previous work}\label{sub:previous}

In \cite{Sh234}, the first author has established a similar result under the assumption that $T$ has ``absolutely no two-cardinal models'' (a condition that the second author has called \emph{nulldimensionality} in \cite{Us24}, since it is equivalent to: every type over a model of $T$ which is ``weakly orthogonal'' to $P$ is algebraic; i.e., there are no ``free'' dimensions that are orthogonal to $P$). This is a very strong hypothesis, that does not hold in most of the interesting examples, such as vector spaces over a field, $ACFA_0$, $ECF$, etc (nulldimensionality is a natural assumption, however, if one is interested in investigating theories that are \emph{categorical over $P$}; in particular, if one's goal is to address the original Gaifman's Conjecture).  Under these circumstances, the first author could work with  a special kind of independent systems of models, in which every model $A_n$ is uniformly locally atomic (u.l.a.) over the union $\bigcup_{u \subset n} A_u$. In the general case (where $T$ is not nulldimensional), this notion of a system is too strong. In section 5 below, we develop a  theory of \emph{general} systems, that we use in order to prove an analogous result without the assumption of nulldimensionality.

In \cite{Us24}, the second author proves the \emph{full existence} property (all complete sets have the existence property) assuming \emph{full stability over $P$} (all complete sets are stable).It is also observed there that some natural examples (such as $ACFA_0$) are fully stable. However, it does not seem likely that this result brings us closer to a generalized version of Gaifman's Conjecture, since we do not know how to approach the ``non-structure'' side of full stability, or, indeed, whether the negation of full stability implies any kind of non-structure. On the other hand, we believe that techniques similar to those used by the first author in \cite{Sh234} in order to establish non-structure assuming instability of u.l.a. $n$-systems, can be strengthened and generalized to the more general notion of a system defined here. We therefore believe that the results in this paper are indeed the first step towards the proof of Conjecture \ref{con:gaifman1}. 

Finally, many particular cases of the  Gaifman property (and, more generally, the existence property) have been proven by numerous authors. Let us just give a few examples. Hodges (partly in collaboration with Yakovlev) \cite{Hod-cat1,Hod-cat2,Hod-cat3} has established the Gaifman property for certain classes of abelian groups (with a predicate for a distinguished subgroup) and pairs of linear orders. It follows from the work of Lachlan that the (full) existence property is always true for a countable stable theory $T$ (the second author's work \cite{Us24} mentioned 	above generalizes this result). Afshordel \cite{MR3343525} proved that every pseudo-finite field occurs as the fixed field of some difference closed field. Hence (any completion of) the theory $ACFA$ (algebraically closed fields with a generic automorphism $\sigma$) has the Gaifman property, where $P$ is interpreted as the fixed field of $\sigma$. Kirby and Zilber \cite{KiZil} proved the existence property for the class of exponentially closed fields (where $P$ is the kernel of the exponent function), assuming that $P$ is $\aleph_0$-saturated.

\section{Basic facts}
It is our intention to make this paper relatively self content and accessible to any expert in model theory. We will therefore begin by recalling some basic properties of the notions defined in the previous section (mostly from \cite{ShUs322a} and \cite{Us24}).

First, we restate the assumption that $P$ is stably embedded in a more uniform way (this is standard, but see Observations 4.1 in \cite{Us24}).

\begin{obs}\label{obs:uniformdef}
%
 There are $\langle \Psi_\psi(\y,\z) :\psi(\bar
x,\bar y)\in L(T)\rangle $ such that for all
$\bar a\subseteq \cC$, $tp_\psi (\bar a/P^\cC)$ is definable by
$\Psi _\psi (\bar y,\bar c)$ for some $\bar c\subseteq P^\cC$. 

In other words, for every $\psi(\x,\y)$ and $\a \in \fC$ there exists $\c \in P^\cC$ such that $\cC \models \forall \y \psi(\a,\y) \longleftrightarrow \Psi(\y,\c)$.

\end{obs}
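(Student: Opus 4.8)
The plan is to run the standard compactness argument that promotes the pointwise definability supplied by stable embeddedness to uniform definability by a single formula $\Psi_\psi$ for each $\psi$. Fix $\psi(\bar x,\bar y)$. By the definition of stable embeddedness, for every $\bar a\subseteq\cC$ the subset $\{\bar b\in P^\cC : \cC\models\psi(\bar a,\bar b)\}$ is definable with parameters from $P^\cC$; that is, there are a formula $\theta(\bar y,\bar z)$ (with $\bar z$ of some finite length depending on $\bar a$) and $\bar c\in P^\cC$ with $\psi(\bar a,\bar b)\iff\theta(\bar b,\bar c)$ for all $\bar b\in P^\cC$. Since $|L(T)|=\aleph_0$, there are only countably many candidate formulas $\theta(\bar y,\bar z)$, and the goal is to extract finitely many that suffice uniformly in $\bar a$.

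First I would, for each formula $\theta(\bar y,\bar z)$ (with $\bar z$ of its fixed length), consider
\[
X_\theta:=\Big\{\bar a : \cC\models\exists\bar z\big[\textstyle\bigwedge_i P(z_i)\ \wedge\ \forall\bar y\big(\textstyle\bigwedge_j P(y_j)\to(\psi(\bar a,\bar y)\leftrightarrow\theta(\bar y,\bar z))\big)\big]\Big\}.
\]
Because $P$ is a predicate of the language, $X_\theta$ is $\emptyset$-definable, and the previous paragraph says precisely that $\bigcup_\theta X_\theta=\cC^{|\bar x|}$. Passing to the Stone space $S_{|\bar x|}(\emptyset)$, the clopen sets corresponding to the $X_\theta$ cover it, so by compactness finitely many $\theta_1,\dots,\theta_k$ already give $\cC^{|\bar x|}=X_{\theta_1}\cup\cdots\cup X_{\theta_k}$; thus every $\bar a$ has its $\psi$-type over $P^\cC$ defined by one of $\theta_1,\dots,\theta_k$ with suitable parameters in $P^\cC$.

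Then I would package $\theta_1,\dots,\theta_k$ into one formula using flag parameters. Writing $\bar z=(\bar z_1,\dots,\bar z_k,v,w_1,\dots,w_k)$ with $\bar z_i$ of the length appropriate to $\theta_i$, set
\[
\Psi_\psi(\bar y,\bar z):=\bigvee_{i=1}^k\big(v=w_i\ \wedge\ \theta_i(\bar y,\bar z_i)\big).
\]
Given $\bar a$, pick some $i$ with $\bar a\in X_{\theta_i}$ and $\bar c_i\in P^\cC$ witnessing it; since $T$ implies $P$ is infinite, pick pairwise distinct $w_1,\dots,w_k\in P^\cC$, set $v:=w_i$, and choose $\bar c_j\in P^\cC$ arbitrarily for $j\neq i$. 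Then for $\bar b\in P^\cC$ the only active disjunct of $\Psi_\psi(\bar b,\bar c)$ is the $i$-th, which holds iff $\theta_i(\bar b,\bar c_i)$ holds iff $\psi(\bar a,\bar b)$; so $\Psi_\psi(\bar y,\bar c)$ defines $tp_\psi(\bar a/P^\cC)$, with $\bar c=(\bar c_1,\dots,\bar c_k,v,w_1,\dots,w_k)\subseteq P^\cC$. (If one wants the equivalence literally for all $\bar y$ rather than for $\bar y\in P^\cC$, this is only a reading of the statement in terms of the $\psi$-type over $P^\cC$, which is all that is used later; one can also conjoin $\bigwedge_j P(y_j)$.)

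There is no serious obstacle here — the result is the well-known equivalence between stable embeddedness and uniform definability — and the final repackaging is pure bookkeeping. The one point that genuinely needs to be noticed is that ``the formula $\theta$ (with some parameters from $P^\cC$) defines $\psi(\bar a,\cdot)$ on $P^\cC$'' is itself an $\emptyset$-definable condition on $\bar a$; this is exactly what makes the covering $\bigcup_\theta X_\theta=\cC^{|\bar x|}$ amenable to a compactness argument, and everything else follows.
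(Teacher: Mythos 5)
Your proof is correct: the compactness argument on the Stone space over $\emptyset$ (using that ``$\theta$ defines $\psi(\bar a,\cdot)$ on $P^\cC$ via some parameters in $P^\cC$'' is an $\emptyset$-definable condition on $\bar a$), followed by packaging the finitely many surviving formulas into one $\Psi_\psi$ with flag parameters from the infinite set $P^\cC$, is exactly the standard route, and your remark that the ``for all $\bar y$'' clause is to be read relative to $P^\cC$ is the right reading. The paper itself offers no proof here, deferring to the cited reference as standard, so there is nothing to contrast with.
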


Now we recall a few basic consequences of completeness. 

\begin{obs} \label{obs:defcomplete}(Observation 3.2 in \cite{Us24}) For any complete $A$ and
for all
$\bar a\subseteq A$, $tp_\psi (\bar a/P\cap A)$ is definable by
$\Psi _\psi (\bar y,\bar c)$ for some $\bar c\subseteq A\cap P$
(where $\Psi _\psi (\bar y,\bar z)$ is as in Observation \ref{obs:uniformdef}).
\end{obs}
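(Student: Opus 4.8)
The plan is to extract the conclusion from the uniform definability schema of Observation~\ref{obs:uniformdef} and the characterization of completeness in Fact~\ref{obs:complete_characterization}, by a quantifier-switching argument that exploits the elementarity $A\cap P\prec P^\cC$. Fix $\psi(\bar x,\bar y)\in L(T)$ and $\bar a\subseteq A$ (a finite tuple matching $\bar x$, so that $\Psi_\psi(\bar y,\bar z)$ from Observation~\ref{obs:uniformdef} has the right arity).

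First I would write down the two definitions of $tp_\psi(\bar a/P^\cC)$ that are already available. By Observation~\ref{obs:uniformdef} there is $\bar c_0\subseteq P^\cC$ with $\cC\models\psi(\bar a,\bar b)\leftrightarrow\Psi_\psi(\bar b,\bar c_0)$ for all $\bar b\subseteq P^\cC$. Since $A$ is complete, Fact~\ref{obs:complete_characterization} gives both $A\cap P\prec P^\cC$ and some formula $\chi(\bar y,\bar d)$ with $\bar d\subseteq A\cap P$ such that $\cC\models\psi(\bar a,\bar b)\leftrightarrow\chi(\bar b,\bar d)$ for all $\bar b\subseteq P^\cC$. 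Now both $\Psi_\psi(\bar y,\bar z)$ (restricted to $\bar y,\bar z$ ranging over $P$) and $\chi(\bar y,\bar d)$ define subsets of powers of $P^\cC$ with parameters from $P^\cC$; hence, by clause~(ii) of Hypothesis~\ref{hyp:1} together with the fact that $T$ has been Morley-ized, I may replace each of them by an equivalent $L(T^P)$-formula and work inside $\cC^P$.

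Comparing the two definitions then yields $\cC^P\models\forall\bar y\,[\chi(\bar y,\bar d)\leftrightarrow\Psi_\psi(\bar y,\bar c_0)]$, so the $L(T^P)$-sentence $\exists\bar z\,\forall\bar y\,[\chi(\bar y,\bar d)\leftrightarrow\Psi_\psi(\bar y,\bar z)]$, whose only parameters are from $\bar d\subseteq A\cap P$, holds in $\cC^P$. Since $A\cap P\prec\cC^P$, it holds in $A\cap P$, yielding $\bar c\subseteq A\cap P$ with $A\cap P\models\forall\bar y\,[\chi(\bar y,\bar d)\leftrightarrow\Psi_\psi(\bar y,\bar c)]$, and then, by elementarity again, $\cC^P\models\forall\bar y\,[\chi(\bar y,\bar d)\leftrightarrow\Psi_\psi(\bar y,\bar c)]$. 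Chaining the equivalences, $\cC\models\psi(\bar a,\bar b)\leftrightarrow\chi(\bar b,\bar d)\leftrightarrow\Psi_\psi(\bar b,\bar c)$ for every $\bar b\subseteq P^\cC$, so $\Psi_\psi(\bar y,\bar c)$ defines $tp_\psi(\bar a/P^\cC)$ with $\bar c\subseteq A\cap P$, as required. The only delicate point is the passage to $L(T^P)$: we must be entitled to apply the elementarity $A\cap P\prec P^\cC$ to formulas that a priori mention the ambient structure $\cC$, and this is precisely what Hypothesis~\ref{hyp:1}(ii) (plus Morley-ization) guarantees. Everything else is bookkeeping with the definability schemata.
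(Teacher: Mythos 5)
The paper itself offers no proof of this Observation --- it is quoted verbatim from \cite{Us24} --- so your argument can only be measured against the route the surrounding material makes available. Your proof is correct, but it is a genuinely different and noticeably heavier route than the one the statement invites. You combine the global definition $\Psi_\psi(\bar y,\bar c_0)$ from Observation~\ref{obs:uniformdef} with the definition over $A\cap P$ supplied by Fact~\ref{obs:complete_characterization}, transport the assertion ``these two formulas cut out the same subset of $P$'' into $L(T^P)$ via Hypothesis~\ref{hyp:1}(ii) and Morley-ization, and then use $A\cap P\prec P^{\cal C}$ to relocate the witness into $A\cap P$; along the way you in fact obtain the stronger conclusion that $\Psi_\psi(\bar y,\bar c)$ defines the full type $tp_\psi(\bar a/P^{\cal C})$. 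All of this works, and the one delicate point you flag (applying elementarity of $A\cap P$ to traces of $L(T)$-formulas) is exactly what the standing conventions are meant to license --- though a careful reader would want one more word on why the trace of $\Psi_\psi$ on $P$ is $L(T^P)$-definable \emph{without parameters outside $A\cap P$} (stable embeddedness plus $\emptyset$-invariance, or the paper's Morley-ization convention, settles this). The shorter intended argument applies Definition~\ref{dfn:complete} directly: by Observation~\ref{obs:uniformdef} the relativized statement $(\exists\bar z\subseteq P)(\forall\bar y\subseteq P)\left[\psi(\bar a,\bar y)\leftrightarrow\Psi_\psi(\bar y,\bar z)\right]$ holds in ${\cal C}$ with parameter $\bar a\subseteq A$, so completeness of $A$ immediately yields a witness $\bar c\subseteq P\cap A$, and $\Psi_\psi(\bar y,\bar c)$ then defines $tp_\psi(\bar a/P^{\cal C})$, a fortiori $tp_\psi(\bar a/P\cap A)$. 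That one-step argument avoids both the passage to $L(T^P)$ and the appeal to Fact~\ref{obs:complete_characterization}, which is itself essentially a repackaging of the fact being proved; what your longer route buys is only an explicit view of where ``very stably embedded'' enters, at the cost of the extra justification noted above.
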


\begin{fact}\label{fct:typeoverP}(Corollary 4.7 in \cite{ShUs322a})
	Let $A$ be a complete set, $p(x)$ a (partial) type over $A$ with $P(x) \subseteq p$. Then $p$ is equivalent 
	to a $T^P$-type $p'$ over $P^A$ with $|p'|=|p|$. 
	
	In particular, if $p$ is finite, then it realized in $P^A$. Similarly, if $|p|<\lam$ and $P^A$ is $\lam$-compact.
\end{fact}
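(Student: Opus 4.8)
The plan is to rewrite $p$ one formula at a time: first push all parameters into $P^A$ using completeness of $A$, then replace each $L(T)$-formula by an $L(T^P)$-formula using clause (ii) of Hypothesis \ref{hyp:1}. Without loss of generality $x$ is a single variable (tuples are handled identically), and recall that $P(x) \in p$.

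\emph{Rewriting one formula.} Fix $\varphi(x, \bar y) \in L(T)$ and $\bar a \subseteq A$ with $\varphi(x, \bar a) \in p$. Applying Fact \ref{obs:complete_characterization} to $\psi(\bar y, x) := \varphi(x, \bar y)$ (so that $x$, which ranges over $P$, is the variable being typed over $P^\cC$, and $\bar a$ is the parameter block), completeness of $A$ gives that $tp_\psi(\bar a / P^\cC)$ is definable over $A \cap P^\cC = P^A$: there are $\theta_\varphi(x, \bar z) \in L(T)$ and $\bar c_\varphi \subseteq P^A$ with $\cC \models \varphi(b, \bar a) \leftrightarrow \theta_\varphi(b, \bar c_\varphi)$ for all $b \in P^\cC$. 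Next, the parameter-free $L(T)$-formula $\theta_\varphi(x, \bar z) \wedge P(x) \wedge \bigwedge_i P(z_i)$ defines a subset of $(P^\cC)^{1+|\bar z|}$ that is $\emptyset$-definable in $\cC$; by Hypothesis \ref{hyp:1}(ii) — equivalently, once $T$ has been Morley-ized, every predicate of $T$ restricted to $P$ is $\emptyset$-definable in $T^P$ — this set is also $\emptyset$-definable in $\cC^P$, say by $\chi_\varphi(x, \bar z) \in L(T^P)$. Put $\theta'_\varphi(x) := \chi_\varphi(x, \bar c_\varphi)$; this is an $L(T^P)$-formula over $P^A$, and for every $b \in P^\cC$ we have $\cC^P \models \theta'_\varphi(b)$ if and only if $\cC \models \varphi(b, \bar a)$.

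\emph{Assembling $p'$ and deducing the consequences.} Let $p'(x) = \set{\theta'_\varphi(x) : \varphi(x, \bar a) \in p} \cup \set{x = x}$, a partial $L(T^P)$-type over $P^A$ with $|p'| \le |p|$ (pad with copies of $x = x$, or use that $p$ is infinite, to arrange $|p'| = |p|$). By the previous step, a $P$-tuple $b$ realizes $p$ in $\cC$ if and only if it realizes $p'$ in $\cC^P$; thus $p$ and $p'$ are equivalent in the stated sense, and $p'$ is consistent, since $p$ is and $P(x) \in p$ forces realizations of $p$ into $P^\cC$. Completeness of $A$ also gives $P^A \prec P^\cC$ (again Fact \ref{obs:complete_characterization}). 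Hence if $p$ is finite, the $L(T^P)$-sentence $\exists x \bigwedge p'(x)$ holds in $P^\cC$, so in $P^A$ by elementarity, and $p$ is realized in $P^A$; and if $|p| < \lam$ while $P^A$ is $\lam$-compact, then $p'$, being a consistent $L(T^P)$-type over $P^A$ of size $< \lam$, is realized directly in $P^A$, and hence so is $p$.

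The only real content is the single-formula step, and there it is the bookkeeping of parameters: stable embeddedness together with completeness of $A$ is exactly what keeps the $T$-definition of ``$\varphi(x,\bar a)$ holds, for $x \in P$'' over $P^A$, while clause (ii) of Hypothesis \ref{hyp:1} is precisely what is needed to pass from that $L(T)$-formula to an $L(T^P)$-formula without reintroducing parameters from outside $P^A$. The one thing to be careful about is to apply the completeness/definability fact with $x$ (and not $\bar a$) in the role of the variable ranging over $P$; everything else is routine compactness and elementarity.
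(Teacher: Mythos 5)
Your argument is correct, and it is the natural proof of this statement: the paper itself does not prove Fact \ref{fct:typeoverP} but imports it from \cite{ShUs322a}, and your route --- using the definability characterization of completeness (Fact \ref{obs:complete_characterization}) to push parameters into $P^A$, then clause (ii) of Hypothesis \ref{hyp:1} (in its Morley-ized, parameter-free form) to replace each $L(T)$-formula by an $L(T^P)$-formula, and finally $P^A \prec P^{\cal C}$ together with elementarity or $\lambda$-compactness for the realization claims --- is exactly the intended one. The one point you rightly flag, that the passage from an $\emptyset$-definable trace on $P$ to an $L(T^P)$-formula must not reintroduce parameters outside $P^A$, is indeed covered by the paper's convention that $T$ is Morley-ized down to the level of predicates.
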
 

\begin{lem}(Lemma 4.12 in  \cite{ShUs322a}) (\underline{The Small Type Extension Lemma})\label{extension}\label{9}\label{le:typeextension} If $A \prec \cal C$ is saturated (or just $A\cap P$ is $|A|$-compact) and $p(\bar x)$ is an $L(T)$-type over $A$
of cardinality $<|A|$,  then there is some $\red{p^*}(\bar x)\in S_*(A)$
extending $p$.
\end{lem}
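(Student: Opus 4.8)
The plan is to realize $p$ step by step, at each stage controlling both the $P$-part and completeness. Recall what is needed for $p^* \in S_*(A)$: we need $P \cap (A \cup \bar c) = P \cap A$ (the realizing tuple adds nothing new to $P$), and $A \cup \bar c$ must be complete. Since $p$ itself is already a consistent type over $A$, the main work is to \emph{extend} it, if necessary, to force completeness, while never being forced to add an element of $P$.

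First I would reduce to the case where $\bar x$ is a single variable $x$, handling a tuple by a straightforward induction on its length (realizing one coordinate at a time over the previous ones, using that $|A|$ is not increased and $A\cap P$ stays $|A|$-compact). For a single variable, I would build an increasing continuous chain of types $\langle p_i : i < |A|\rangle$ with $p_0 = p$, each $p_i$ of cardinality $<|A|$, so that the union $p^* = \bigcup_i p_i$ lies in $S_*(A)$. At successor stages I enumerate the ``completeness tasks'': pairs $(\psi(x,\bar y,\bar z), \bar b)$ with $\bar b \subseteq A$, such that $p_i(x) \vdash (\exists \bar z \in P)\psi(x,\bar y, \bar z)$ is consistent — more precisely, whenever $p_i \cup \{\psi(x,\bar y,\bar b,\bar z)\}$ forces $(\exists \bar z' \in P)\,\theta(\bar z', \ldots)$ over the would-be set $A \cup \{x\}$, I must add to $p_{i+1}$ an instruction that a witness already lies in $P\cap A$. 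Equivalently, and more cleanly, the point is this: for any realization $c$ of $p_i$, and any $\psi(\bar x', \bar y')$ and parameters from $A \cup \{c\}$, if $\models (\exists \bar x' \in P)\psi(\bar x', \bar d)$ with $\bar d \subseteq A\cup\{c\}$, I want to be able to extend $p_i$ so that some witness from $P\cap A$ is named. Since there are at most $|A|$ such tasks and we have $|A|$ stages, a bookkeeping argument handles all of them in the limit.

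The one genuine obstacle is: when I try to handle a single completeness task, am I forced to put a new element of $P$ into the type, or to make the type inconsistent, or to blow up its cardinality? Here I would use Fact \ref{fct:typeoverP} together with the $|A|$-compactness of $P\cap A$. The key observation is that ``adding $c$ realizing $p_i$'' only creates demands of the form: a certain $T$-definable (with parameters in $A$, and the ``place-holder'' $c$) subset of $P^{\cC}$ is nonempty; by stable embeddedness (Observation \ref{obs:uniformdef}) and the fact that the relevant $\psi$-type of $Ac$ over $P$ is controlled by finitely many parameters, such a demand is captured by a $T^P$-type over $P\cap A$ of cardinality $\le |p_i| + |T| < |A|$; by $|A|$-compactness it is realized in $P \cap A$, and I simply add to $p_{i+1}$ the formula naming that witness. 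This does not enlarge the $P$-part (the witness is already in $A$), adds $<|A|$ formulas, and keeps consistency (the witness genuinely works). Iterating and taking unions, $p^*$ is consistent, has the same $P$-part as $A$ (nothing new in $P$ was ever required of the realizing element — note $P(x) \notin p$, so $x \notin P$), and $A \cup \{x\}$ is complete by Fact \ref{obs:complete_characterization}, since every relevant $\psi$-type over $P$ was made definable over $A\cap P$ by construction and $A \cap P \prec P$ was already given. That yields $p^* \in S_*(A)$ extending $p$, as required.

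Two small points I would be careful about. First, I should make sure $A \cap P$ really is $|A|$-compact in the hypothesis ``$A$ is saturated'': if $|A| = \lambda$ and $A$ is $\lambda$-saturated then $P^A \prec P^{\cC}$ and $P^A$ is $\lambda$-saturated, hence $\lambda$-compact, so this reduces to the stated ``or just'' clause. Second, in the single-variable construction I must keep each $p_i$ of size $<|A|$; since $|T|$ is countable (or at any rate $< |A|$, as $A$ is infinite) and we add $<|A|$ formulas at each of $<|A|$ stages when $|A|$ is regular — and for singular $|A|$ one argues stage-by-stage with cofinal bookkeeping — the cardinality bound is maintained, and at limits continuity gives a type of size $<|A|$ as well (or, at the very last step, possibly of size $|A|$, which is still fine since $S_*$ imposes no cardinality bound on its members beyond consistency and the two structural conditions). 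This completes the proof outline; the only genuinely technical part, as indicated, is verifying that each completeness demand is a ``small'' type over $P\cap A$ and hence realized there.
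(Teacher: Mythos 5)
Your proposal is essentially correct, and since this lemma is imported from \cite{ShUs322a} without proof in the present paper, I can only compare it with the standard argument, which is exactly the one you give: transfinite bookkeeping over all ``completeness tasks'' $(\psi(\bar z,\bar y,\bar x),\bar b)$, where each task is discharged by observing that the candidate witnesses form a consistent type $r(\bar z)\supseteq\{P(\bar z)\}$ over $A$ of cardinality $\le |p_i|+|T|<|A|$ (namely $\{(\exists\bar x)(\bigwedge p_i'(\bar x)\wedge\psi(\bar z,\bar b,\bar x)):p_i'\subseteq p_i \text{ finite}\}$), hence realized in $P\cap A$ by Fact \ref{fct:typeoverP}; so one may consistently add either $\neg(\exists\bar z\in P)\psi(\bar z,\bar b,\bar x)$ or $\psi(\bar a,\bar b,\bar x)$ with $\bar a\subseteq P\cap A$. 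Your cardinality bookkeeping is fine (and needs no regularity assumption: at stage $i<|A|$ one has $|p_i|\le |p|+|i|+|T|<|A|$ even for singular $|A|$).

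Two small points to tighten. First, the aside ``$P(x)\notin p$, so $x\notin P$'' is a non sequitur: a type need not contain $P(x)$ to force $x$ into $P$, and $p$ is in any case allowed to contain $P(x)$. But this does not matter, because the requirement $P\cap(A\cup\bar c)=P\cap A$ is already subsumed by your completeness tasks: applying the task machinery to $\psi(z,x):=(z=x_j)$ yields, for each coordinate, either $x_j\notin P$ or $x_j=a$ for some $a\in P\cap A$. You should delete the aside and make this instance of the bookkeeping explicit. Second, the union $\bigcup_i p_i$ need not be a complete type; you should either add ``decide $\varphi(\bar x,\bar a)$'' to the list of tasks or note that any completion of $\bigcup_i p_i$ still lies in $S_*(A)$, since membership in $S_*(A)$ is witnessed by the formulas already placed into $\bigcup_i p_i$ and is therefore inherited by every extension.
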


\begin{co}\label{approx}\label{8}\label{co:complete_exnetdabletomodel}(Proposition 4.13 in \cite{ShUs322a}).
Suppose that $A\cap P$ is $|A|$-compact. Then $A$ is complete if and only if $A$ has the existence property (i.e., there exists an $M\prec {\cal C}$ with $P^M\subseteq A\subseteq M$).
If $|A|=|A|^{<|A|}>|T|$, we can add ``$M$ saturated''.
\end{co}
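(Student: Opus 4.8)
The plan is as follows. The ``only if'' direction is immediate: if $A$ has the existence property, say $A\subseteq M$ with $P^M=P^A$, then $P^M\subseteq A\subseteq M$, so $A$ is complete by Observation \ref{obs:complete}. For the converse, assume $A$ is complete and $P^A$ is $|A|$-compact; note $A$ is infinite (completeness forces $P^A\prec P^\cC$), and we may assume $|A|\ge|T|$. Set $\lam=|A|$. I would construct $M$ as the union of an increasing continuous chain $\langle A_i:i\le\lam\rangle$ of \emph{complete} sets with $A_0=A$ and with the invariants $P^{A_i}=P^A$ and $|A_i|=\lam$ for every $i$.

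These two invariants are exactly what keeps the Small Type Extension Lemma (Lemma \ref{le:typeextension}) applicable all along the chain, since then $A_i\cap P=P^A$ is $|A|$-compact, hence $|A_i|$-compact, and $A_i$ is complete. At a limit $\delta$ one puts $A_\delta=\bigcup_{i<\delta}A_i$; this has $P$-part $P^A$ and cardinality $\lam$, and it is complete by Fact \ref{obs:complete_characterization}: any finite tuple of $A_\delta$ already lies in some $A_i$, so its $\psi$-type over $P^\cC$ is definable over $A_i\cap P=P^A$, and $P^A\prec P^\cC$. The successor steps carry out a Tarski--Vaught closure, dovetailed over the $\le\lam$-many tasks in the standard way. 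At stage $i+1$, given an existential formula $(\exists x)\varphi(x,\bar b)$ with $\bar b\subseteq A_i$ true in $\cC$, one applies Lemma \ref{le:typeextension} to the one-formula type $\{\varphi(x,\bar b)\}$ over $A_i$ to obtain some $p^*\in S_*(A_i)$ extending it; realizing $p^*$ by an element $c$ and setting $A_{i+1}=A_i\cup\{c\}$ keeps the set complete and does not enlarge the $P$-part (by the definition of $S_*$), while providing the required witness. (Formulas $(\exists x)(P(x)\wedge\chi(x,\bar b))$ need no special treatment: completeness of $A_i$ already supplies a witness in $P^A\subseteq A_i$, which is precisely what the lemma returns in that case.) Since every finite tuple of $M:=A_\lam$ lies in some $A_i$, Tarski--Vaught gives $M\prec\cC$; and $P^M=P^A\subseteq A\subseteq M$, so $A$ has the existence property.

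For the last clause, assume in addition $\lam=\lam^{<\lam}>|T|$; then $\lam$ is regular, $2^{<\lam}\le\lam$, and any set of size $\le\lam$ has at most $\lam$ types over its subsets of size $<\lam$, so I would also add to the construction the task of realizing each such type. When handed a type $q$ over some $B\subseteq A_i$ with $|B|<\lam$ (so $|q|<\lam=|A_i|$, using $|T|<\lam$), apply Lemma \ref{le:typeextension} to $q$ to get $p^*\in S_*(A_i)$ extending it, realize it by $c$, and set $A_{i+1}=A_i\cup\{c\}$ as before; then $c$ realizes $q$ (and if $q\vdash P(x)$ then necessarily $c\in P^A\subseteq A_i$, i.e.\ $q$ was already realized, which is fine). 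Since $\lam$ is regular, every subset of $M$ of size $<\lam$ lies in some $A_i$, so $M$ is $\lam$-saturated, i.e.\ saturated of cardinality $\lam$.

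The proof is mostly bookkeeping once Lemma \ref{le:typeextension} is in hand; the one place needing genuine care is maintaining the invariant $|A_i|=\lam$ throughout, since this is what makes the lemma applicable at every stage (the arithmetic hypothesis $\lam^{<\lam}=\lam>|T|$ in the last clause serves, as usual, only to keep the number of saturation tasks down to $\lam$, to ensure $|q|<\lam$, and to give the regularity of $\lam$). I do not expect any further serious obstacle.
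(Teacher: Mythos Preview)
The paper does not give its own proof of this corollary: it is quoted verbatim as Proposition~4.13 of \cite{ShUs322a} among the ``basic facts'' recalled in Section~3, with no proof supplied here. So there is nothing in the present paper to compare your argument against.

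Your argument is the natural one and is essentially correct: a Henkin-style chain of complete sets, using the Small Type Extension Lemma (Lemma~\ref{le:typeextension}) at each successor step to produce a witness via a type in $S_*(A_i)$, so that completeness and the $P$-part are preserved; Fact~\ref{obs:complete_characterization} handles limits, and the saturated variant just adds realizing small types to the task list. This is precisely the proof one expects to find in \cite{ShUs322a}.

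One small point worth flagging: your ``we may assume $|A|\ge|T|$'' is asserted without justification, and it is not entirely innocent. If $|A|<|T|$ there are $|T|>\lambda$ formulas to witness, so one cannot finish in $\lambda$ stages while keeping $|A_i|=\lambda$; but allowing $|A_i|>|A|$ means $P^{A_i}=P^A$ is no longer $|A_i|$-compact, and Lemma~\ref{le:typeextension} \emph{as stated} ceases to apply. In the applications of this paper (countable $T$, or the ``moreover'' clause with $|A|>|T|$) this never arises, so the issue is cosmetic; but strictly speaking you should either record $|A|\ge|T|$ as an extra hypothesis or observe that for a single-formula $p$ the proof of Lemma~\ref{le:typeextension} requires much less compactness than its statement.
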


\begin{fact}\label{fct:complete_char}(see Theorem 4.7 in \cite{Us24})
Let $A$ be a set. Then the following are equivalent:
\begin{enumerate}
\item $A$ is complete
%

%

\item
$P^A \elem P^\cC$, and:

For every finite type $p_0(\x)$ over $A$ (not necessarily realized in $A$) and a formula $\psi(\x,\b,\y)$  over $A$ , there is  $\d \subseteq P^A$ such that  the following is a (finite) type over $A$:
\[
p_0(\x) \cup \left\{  (\forall \y \subseteq P) \left[ \psi(\x,\b,\y) \longleftrightarrow \Psi_{\psi}(\y,\d) \right]  \colon 
	i<k \right\}
%
\]
where  $\Psi_\psi(\y,\z)$ is the defining formula for $\psi = \psi(\x\x',\y)$.

\item For every finite type $p_0(\x)$ over $A$ (not necessarily realized in $A$) and every formula $\psi(\x,\b,\z)$ over $A$, there is  $\d \subseteq P^A$ such that the following is a (finite) type over $A$:

\[
	p_0(\x) \cup \left\{  \left[(\exists \z \subseteq P)  \psi(\x,\b,\z)\right] \longrightarrow \psi(\x,\b,\d)   \right\}
\]

\end{enumerate}

\end{fact}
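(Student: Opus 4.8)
The plan is to prove the cycle of implications $(1)\Rightarrow(2)\Rightarrow(3)\Rightarrow(1)$; the only step requiring an idea is $(1)\Rightarrow(2)$, while the other two are direct unwindings of the definitions. (Throughout, the displayed set in clause~(2) is read as consisting of a single formula; the finitely-many version follows by the same argument applied to a conjunction.) For $(1)\Rightarrow(2)$, assume $A$ is complete and fix a finite type $p_0(\x)$ over $A$, a formula $\psi(\x,\x',\y)$, and parameters $\b\subseteq A$. As $p_0$ is finite it is equivalent to a single $L(T)$-formula over $A$, so there is a realization $\a^*\models p_0$ in $\cC$; by stable embeddedness (Observation~\ref{obs:uniformdef}), applied to the tuple $\a^*\b$ and the formula $\psi(\x\x',\y)$, there is $\c^*\subseteq P^\cC$ with $\models\psi(\a^*,\b,\bar e)\leftrightarrow\Psi_\psi(\bar e,\c^*)$ for every $\bar e\subseteq P^\cC$. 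The crucial observation is that the set
\[
	D=\set{\z \colon (\exists\x)\big(p_0(\x)\wedge(\forall\y\subseteq P)(\psi(\x,\b,\y)\leftrightarrow\Psi_\psi(\y,\z))\big)}
\]
is definable by a single $L(T)$-formula with parameters in $A$ (using that $p_0$ is equivalent to one formula over $A$ and that $\Psi_\psi$ is parameter-free), and that $\c^*\in D$, so $\models(\exists\z\subseteq P)(\z\in D)$. Applying the bare definition of completeness (Definition~\ref{dfn:complete}) to the defining formula of $D$ yields $\d\subseteq P\cap A=P^A$ with $\d\in D$, which unwinds to precisely the assertion that $p_0(\x)\cup\set{(\forall\y\subseteq P)(\psi(\x,\b,\y)\leftrightarrow\Psi_\psi(\y,\d))}$ is consistent, i.e., a finite type over $A$. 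Finally, since $A$ is complete, $P^A\elem P^\cC$ (by Fact~\ref{obs:complete_characterization}, or directly from Definition~\ref{dfn:complete} by a Tarski--Vaught argument, recalling that $T$ has no function symbols), so clause~(2) holds; note that no saturation hypothesis on $A$ is used.

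For $(2)\Rightarrow(3)$: given $p_0(\x)$ and $\psi(\x,\b,\z)$ over $A$, apply clause~(2) with $\z$ in the role of the $P$-tuple to get $\bar c\subseteq P^A$ and $\a\models p_0$ with $(\forall\z\subseteq P)(\psi(\a,\b,\z)\leftrightarrow\Psi_\psi(\z,\bar c))$; if $\models\neg(\exists\z\subseteq P)\psi(\a,\b,\z)$, then any $\d\subseteq P^A$ works, as $\a$ then satisfies the implication in (3) vacuously; otherwise $\models(\exists\z\subseteq P)\Psi_\psi(\z,\bar c)$, and since $\bar c\subseteq P^A\elem P^\cC$ there is $\d\subseteq P^A$ with $\Psi_\psi(\d,\bar c)$, whence $\psi(\a,\b,\d)$. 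In either case $\a$ realizes the type displayed in (3). For $(3)\Rightarrow(1)$: apply clause~(3) to the trivial one-variable type $p_0$ and, for an arbitrary formula $\chi(\z)$ over $P^A$ (relativized to $P$), to the formula $\psi(\x,\b,\z):=\chi(\z)$; this produces $\d\subseteq P^A$ with $\models(\exists\z\subseteq P)\chi(\z)\to\chi(\d)$, which is the Tarski--Vaught criterion, so $P^A\elem P^\cC$. Next, given $\psi(\bar x,\bar y)$ and $\b\subseteq A$ with $\models(\exists\bar x\subseteq P)\psi(\bar x,\b)$, apply clause~(3) to the trivial $p_0$ and to $\psi(\x,\b,\bar z):=\psi(\bar z,\b)$ to get $\d\subseteq P^A$ with $\models(\exists\bar z\subseteq P)\psi(\bar z,\b)\to\psi(\d,\b)$, hence $\models\psi(\d,\b)$, which is exactly the defining clause of completeness.

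The one genuine point is the identification, in $(1)\Rightarrow(2)$, of the collection $D$ of $\Psi_\psi$-codes of the $\psi$-types over $P^\cC$ of realizations of the fixed finite type $p_0$ as an $A$-definable subset of $P^\cC$: once this is noticed, completeness, which a priori constrains only tuples genuinely inside $A$, can be invoked for $D$ as a whole to pull such a code into $P^A$. Everything else is bookkeeping: tracking which parameters lie in $P^A$ rather than merely in $A$, and using $P^A\elem P^\cC$ to push existential quantifiers over $P$ down into $P^A$.
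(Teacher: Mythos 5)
The paper does not prove this fact; it only cites Theorem 4.7 of \cite{Us24}, so there is no in-paper argument to compare against. Your proof is correct and is the natural one: the essential point, correctly identified, is that in $(1)\Rightarrow(2)$ the set $D$ of $\Psi_\psi$-codes of $\psi$-types of realizations of $p_0$ is a single $A$-definable subset of $P^{\cC}$, nonempty by stable embeddedness, so the bare definition of completeness pulls a code into $P^A$; the remaining implications and the handling of the (mistyped) finite index $i<k$ via a conjunction are routine and handled properly.
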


%


Finally, we recall that, although stability over $P$ is not a property of a specific set $A$, but rather of its theory as a substructure, it is enough to consider one saturated $A' \equiv A$:

\begin{fact} (Corollary 5.5 in \cite{ShUs322a})\label{fct:satstable}
	In Definition \ref{6}(iv), it is not necessary to consider all $A' \equiv A$. 
	More specifically, a complete set $A$ is stable if and only if $|S_*(A')| \le |A'|^{|T|}$ for some $A' \equiv A$ saturated, $|A'|>|T|$. 
	
	Moreover, it is enough that for $A'$ as above, $|S_*(A')| < 2^{|A'|}$.
\end{fact}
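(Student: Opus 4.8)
\textbf{Proof proposal for Fact \ref{fct:satstable}.}

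The plan is to reduce from the definition (which quantifies over all $A' \equiv A$) to a single saturated model by a standard compactness-plus-counting argument, and then to see that the weaker hypothesis $|S_*(A')| < 2^{|A'|}$ suffices via the usual instability-produces-many-types dichotomy (as in the classical theory of stability, adapted to types in $S_*$). First I would fix a saturated $A' \equiv A$ with $|A'| = \mu > |T|$ and $\mu = \mu^{|T|}$ (such $\mu$ exists, e.g.\ under GCH at successors, or just work in a suitable cardinal; for the reduction only the existence of \emph{some} saturated $A'$ of appropriate size is needed). The nontrivial direction is: if $A$ is \emph{not} stable, then for \emph{every} such saturated $A'$ we have $|S_*(A')| \ge 2^{|A'|}$ (in particular $> |A'|^{|T|}$). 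The converse direction (stability of $A$, i.e.\ the bound holding for \emph{all} $A'' \equiv A$, trivially gives the bound for the particular saturated $A'$) is immediate.

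So suppose $A$ is unstable: there is some $A'' \equiv A$ with $|S_*(A'')| > |A''|^{|T|}$. The key step is a tree construction. Working inside the monster $\cC$ (and using that $S_*$-types are controlled by $\psi$-types over $P^\cC$ that are definable over the $P$-part, via Fact \ref{obs:complete_characterization} / Observation \ref{obs:defcomplete}), one extracts from the failure of the bound a formula $\psi(\bar x, \bar y)$ and, by the Erd\H{o}s--Rado / counting machinery familiar from the order property, builds a binary tree $\langle \bar b_\eta : \eta \in {}^{<\omega}\!2 \rangle$ of parameters together with witnessing partitions such that the $2^{\aleph_0}$ branches give rise to $2^{\aleph_0}$ pairwise-distinct $\psi$-types; iterating this inside a set of size $|A'|$ and using saturation of $A'$ to realize all the relevant small types (this is exactly where the Small Type Extension Lemma, Lemma \ref{le:typeextension}, is used: each branch-type is a small type over $A'$ and extends to a member of $S_*(A')$), one embeds a full binary tree of height $|A'|$, yielding $2^{|A'|}$ members of $S_*(A')$. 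The point that these lie in $S_*(A')$ rather than just $S(A')$ is guaranteed because the construction only ever adds elements realizing formulas over $P$ in a way compatible with completeness — precisely the content of Lemma \ref{le:typeextension} — so $P^{A' \cup \bar c} = P^{A'}$ and $A' \cup \bar c$ stays complete.

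The main obstacle I anticipate is the bookkeeping in the tree construction: one must ensure that distinct branches genuinely yield distinct elements of $S_*(A')$ and not merely distinct $\psi$-types over $P^\cC$ (which a priori need not restrict to distinct types over $A'$), and one must verify at each stage that the partial types being realized are small enough for Lemma \ref{le:typeextension} to apply and that completeness is preserved along the way. This is the technical heart; the rest is routine cardinal arithmetic ($2^{|A'|} > |A'|^{|T|}$ when $|A'| > |T|$, so the two phrasings of the conclusion indeed agree for the saturated witness), together with invoking that any two saturated models of the same theory of the same cardinality are isomorphic, so the choice of $A'$ among saturated models of a fixed size is immaterial.
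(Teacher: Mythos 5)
First, a point of order: the paper does not prove Fact \ref{fct:satstable} at all --- it is quoted verbatim as Corollary 5.5 of \cite{ShUs322a} --- so there is no in-paper proof to measure your argument against, and it has to stand on its own. As an outline it has the right shape: argue the contrapositive, transfer a witness of instability from an arbitrary $A''\equiv A$ to the saturated $A'$, and use saturation together with the Small Type Extension Lemma \ref{le:typeextension} to make the constructed types land in $S_*(A')$ rather than merely $S(A')$. But the decisive step is asserted rather than proved, and as stated it does not follow from the machinery you invoke.

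Concretely, the entire content of the ``moreover'' clause is that instability forces $|S_*(A')|=2^{|A'|}$, and your justification is ``build a binary tree \dots\ iterating this \dots\ one embeds a full binary tree of height $|A'|$.'' The standard consequences of $|S(B)|>|B|^{|T|}$ that you appeal to are (i) a binary tree of formulas of height $\omega$, which yields $2^{\aleph_0}$ types over a \emph{countable} parameter set, and (ii) the order property for some $\psi$, which over a saturated model of size $\lam$ yields at least $\mathrm{ded}(\lam)$ many $\psi$-types via cuts. Neither gives a tree of height $|A'|$: a height-$\omega$ tree does not ``iterate'' into a height-$\lam$ tree, and $\mathrm{ded}(\lam)$ is consistently smaller than $2^\lam$, so reaching $2^{|A'|}$ is precisely the non-routine part, not ``routine cardinal arithmetic.'' (It could be rescued if you argued that $|A'|^{<|A'|}=|A'|$ for the saturated $A'$, whence $\mathrm{ded}(|A'|)=2^{|A'|}$, but you do not address this, nor do you verify that distinct cuts yield distinct elements of $S_*(A')$ and not just distinct $\psi$-types.) A second gap: you never explain how a counting failure at an arbitrary $A''\equiv A$ --- where $S_*$ is cut out by a completeness condition, not by consistency of formulas --- is converted into a \emph{finitary} configuration preserved under elementary equivalence and hence re-creatable inside $A'$. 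For ordinary types this is the step ``$|S(B)|>|B|^{|T|}$ implies some $\varphi$ has the order property''; its $S_*$-analogue needs an argument, presumably via Fact \ref{obs:complete_characterization} and the definability of $\psi$-types over $P$. Until these two steps are supplied the proposal is a plan rather than a proof; the actual argument is the one in Corollary 5.5 and the surrounding section of \cite{ShUs322a}.
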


\section{Local Constructibility}

\subsection{Isolation and atomicity}

Let us recall the definitions of the notions of isolation relevant for the discussion in this article. 

\begin{de}
\begin{enumerate}
\item A (partial) type $p$ over a set $A$ is called \emph{locally isolated} (l.i.) if for every formula $\ph(x,y)$ there exists a formula $\theta_\ph(x,a_\ph) \in p$ such that $\theta_\ph(x,a_\ph) \vdash p\rest\ph$. We say that $p$ is locally isolated (l.i.) over $B \subseteq A$ if for every $\ph(x,y)$, its isolating formula $\theta_\ph$ is over $B$ (so $a_\ph \in B$). 

\item A (partial) type $p$ over a set $A$ is called \emph{$\lam$-isolated} if there exists a subset $r \subseteq p$ with $|r|<\lam$ such that $r \equiv p$. We say that $p$ is $\lam$-isolated over $B \subseteq A$ if $r$ as above is a partial type over $B$.

\end{enumerate}
\end{de}

\begin{re}
\begin{enumerate}
\item So $p$ is locally isolated if for every formula $\ph$, the restriction of $p$ to a $\ph$-type is implied by a single formula in $p$ (which does not itself have to be a $\ph$-formula). 
\item A type $p$ is isolated iff it is $\aleph_0$-isolated. 
\end{enumerate}
 
\end{re}

\smallskip

The following Lemma (that follows from Fact \ref{fct:complete_char}) with play an important role in our constructions. It states that a locally isolated type over a complete set is \emph{always} weakly orthogonal to $P$.

\begin{lem}\label{lem:isolated_star}(Lemma 6.3 in \cite{Us24})
 Let $B$ be a complete set, $p \in S(B)$ locally isolated. Then $p \in S_*(B)$. 
\end{lem}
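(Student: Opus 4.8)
\textbf{Proof plan for Lemma \ref{lem:isolated_star}.}

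The goal is to show that a locally isolated complete type $p \in S(B)$ over a complete set $B$ is weakly orthogonal to $P$, i.e.\ lies in $S_*(B)$. Unpacking Definition \ref{6.5}(i), this means two things: first, that realizing $p$ does not enlarge the $P$-part, i.e.\ if $\c \models p$ then $P \cap (B \cup \c) = P \cap B$; and second, that $B \cup \c$ is again complete. Since $p$ is a complete type, $P \cap (B\cup \c) = P\cap B$ will follow once we know $B\cup \c$ is complete — indeed, if some coordinate of $\c$ (or an element definable over $B\c$) landed in $P^\cC \setminus P^B$, completeness of $B\c$ together with Fact \ref{obs:complete_characterization} (which requires $P^{B\c} \prec P^\cC$ and in particular $P^{B\c} = P^B$ once $B$ is complete and nothing new is added) would be violated; more directly, one notes that $p$ contains either the formula $P(x_i)$ or $\neg P(x_i)$ for each coordinate, and the local isolation will pin down the $\ph$-type for $\ph$ the relevant projection, forcing any realization to respect $P^B$. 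So the crux is completeness of $B \cup \c$, which I would verify using the characterization in Fact \ref{fct:complete_char}(3): given a finite type $p_0(\x)$ over $B\c$ and a formula $\psi(\x, \b, \z)$ over $B\c$ (with parameters from $B$ and from $\c$), I must produce $\d \subseteq P^{B\c} = P^B$ such that $p_0(\x) \cup \{ (\exists \z \subseteq P)\,\psi(\x,\b,\z) \to \psi(\x,\b,\d)\}$ is consistent.

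The key step is to absorb the parameters $\c$ into the variables. Write $\psi(\x, \b', \c, \z)$ where $\b'$ is the part of $\b$ from $B$, and consider the formula $\psi'(\x\x', \b', \z) := \psi(\x, \b', \x', \z)$ in the extended variable tuple $\x\x'$, substituting $\x' = \c$. The local isolation of $p$ provides, for the formula $\ph(\x', y)$ coming from $\psi'$ viewed as a formula in $\x'$ over $B \cup \{\x, \z\}$-parameters — more precisely, for each relevant formula — an isolating formula $\theta_\ph(\x', a_\ph) \in p$ with $a_\ph \in B$. The idea is that, because $\tp(\c/B)$ restricted to the appropriate $\ph$-type is isolated by a single formula over $B$, the question of which $\z \subseteq P$ witness $(\exists \z)\psi(\x,\b',\c,\z)$ is ``controlled'' over $B$: one moves to the complete set $B$ (for which the defining datum $\d$ is guaranteed by Fact \ref{fct:complete_char}(3) applied to $B$ and to the formula $\psi'(\x\x',\b',\z)$ with the finite type $p_0'(\x\x')$ obtained from $p_0$ together with the isolating formulas $\theta_\ph(\x',a_\ph)$), finds $\d \subseteq P^B$ there, and then transfers back: the isolating formulas force any realization of $p_0'$ in the $\x'$-coordinates to behave, with respect to $\psi$ and $\z$, exactly as $\c$ does, so the same $\d$ works for $p_0$ and $\psi(\x,\b',\c,\z)$ over $B\c$.

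I expect the main obstacle to be bookkeeping the interaction between the finitely many formulas $\ph$ for which local isolation supplies an isolating formula and the single formula $\psi$ we are trying to handle: one needs to make sure the finite set of isolating formulas $\{\theta_\ph(\x', a_\ph)\}$ that one adjoins to $p_0'$ genuinely captures enough of $\tp(\c/B)$ to guarantee the transfer, i.e.\ that the relevant $\ph$ (built from $\psi$ and $\Psi_\psi$) is among those for which one invoked isolation. Since local isolation is quantified over \emph{all} formulas $\ph(x,y)$, this is available, but the argument must be set up so that the formula whose $\ph$-type needs to be controlled is identified explicitly — it is essentially the formula $(\exists \z \subseteq P)\psi(\x,\b',\x',\z)$ together with $\psi(\x,\b',\x',\d)$ read as formulas in $\x'$ — and then local isolation of $p$ at that formula does the job. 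A secondary, more routine point is checking $P^{B\c} \prec P^\cC$, which is immediate since $B$ complete gives $P^B \prec P^\cC$ and weak orthogonality (once the $P$-part is shown unchanged) gives $P^{B\c} = P^B$. Everything else is a direct application of Fact \ref{fct:complete_char}(3) to the complete set $B$ and a formula with one extra block of variables.
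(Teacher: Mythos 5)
The paper does not actually prove this lemma: it cites it as Lemma 6.3 of \cite{Us24} and only remarks that it ``follows from Fact \ref{fct:complete_char}'', so there is no in-paper argument to compare against line by line. That said, your core mechanism is the right one and is consistent with that hint: replace the parameters $\c$ by fresh variables $\x'$ constrained by an isolating formula $\theta_\ph(\x',a_\ph)\in p$ for the relevant formula, push the witnessing problem down to the complete set $B$, extract $\d\subseteq P^B$ there using completeness of $B$, and use the fact that $\theta_\ph$ \emph{decides} the relevant instance over $B$-parameters to pull the witness back to $\c$. The bookkeeping worry you flag (which formula to isolate) is real but resolvable without circularity: first fix the single formula $\rho(\x',\bar w)$ obtained from $p_0$, $\psi$ and the implication $(\exists\z\subseteq P)\psi\to\psi(\cdot,\bar w)$ by existentially quantifying out $\x$ --- this formula mentions no $\theta$ --- then invoke local isolation at $\rho$, and apply the raw Definition \ref{dfn:complete} of completeness of $B$ to $(\exists\x')\bigl(\theta(\x',a)\wedge\rho(\x',\bar w)\bigr)$. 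This is slightly cleaner than routing through Fact \ref{fct:complete_char}(3) with an enlarged finite type, and the same two-step pattern verifies Definition \ref{dfn:complete} for $B\cup\c$ directly.

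There is one genuine error to repair. Your opening claim that $P\cap(B\cup\c)=P\cap B$ ``will follow once we know $B\cup\c$ is complete'' is false: any model $M\supseteq B$ is complete by Observation \ref{obs:complete}, yet typically $P^M\supsetneq P^B$, and Fact \ref{obs:complete_characterization} does not by itself force $P^{B\c}=P^B$. Your ``more direct'' alternative is the right track but also incomplete as stated: local isolation at $\ph(\x,y):=(x_i=y)$ only decides $x_i=b$ for $b\in B$, which by itself still permits $c_i\in P^\cC\setminus P^B$. You must again combine isolation with completeness of $B$: from $\models(\exists y\in P)(\exists\x)\bigl(\theta(\x,a)\wedge x_i=y\bigr)$ completeness yields $b\in P^B$ with $\models(\exists\x)\bigl(\theta(\x,a)\wedge x_i=b\bigr)$, and then isolation forces $x_i=b\in p$, so the $P$-coordinates of $\c$ lie in $P^B$. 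With that repair the proposal is a correct proof.
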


\begin{lem}\label{lem:li} (Lemma 7.1 in \cite{Us24}
Assume that $T$ is countable. 
\begin{enumerate}
\item 
Let $B$ be a stable set, $p(\bar x)$  be
a finite $m$-type over $B$, $\psi(\x,\y)$ a formula. Then there is
a finite $\psi$-type $q(\bar x)$ over $B$ 
such that 
$p(\bar x)\cup q(\bar x)$ is consistent, and it implies a complete $\psi$-type over $B$. 


\item 
Let $B$ be a stable set, $p(\bar x)$  be
a finite $m$-type over $B$. Then there is
$q(\bar x)$ such that $|q(\bar x)|\leq |T| = \aleph_0$,
$p(\bar x)\cup q(\bar x)$ consistent, and there is $r\in S_*(B)$ such
that $r$ is locally isolated, and $p(\bar x)\cup q(\bar x)\equiv r(\bar x)$.

In particular,  $r(\bar x)$ is
$\aleph_1$-isolated.

\end{enumerate}

\end{lem}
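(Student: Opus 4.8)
\textbf{Proof plan for Lemma \ref{lem:li}.}

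The plan is to prove part (1) first and then bootstrap to part (2) by iterating over all formulas $\psi$ of the (countable) language. For part (1), the key input is Fact \ref{fct:complete_char}(3): since $B$ is stable, it is in particular complete, so for the finite type $p(\x)$ and a suitable formula (to be chosen below) there is a parameter $\d \subseteq P^B$ making a certain finite extension of $p$ consistent. However, the real engine for part (1) should be a counting argument exploiting stability. First I would fix $\psi(\x,\y)$ and consider the "dual" $\psi$-formulas $\psi(\x,\b)$ with $\b \in B$; the collection of $\psi$-types over $B$ consistent with $p$ forms a closed subset of a Stone space. I want to produce a single finite $\psi$-type $q(\x)$, consistent with $p$, such that $p \cup q$ isolates a complete $\psi$-type. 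The standard way to get this is: if no such $q$ existed, then above $p$ the $\psi$-types would form a perfect (or at least "large") tree, and by a Cantor--Bendixson / splitting argument one would build $2^{|B|}$ many distinct complete types extending $p$ — but one must check these types (or enough of them) land in $S_*(B)$ so as to contradict $|S_*(B')| < 2^{|B'|}$ from Fact \ref{fct:satstable}. To ensure they lie in $S_*(B')$, I would work over a saturated $B' \equiv B$ and use the Small Type Extension Lemma (Lemma \ref{le:typeextension}) to extend each branch-type (which has size $\le |T| = \aleph_0 < |B'|$) to a member of $S_*(B')$; distinctness of the branches gives distinctness of these $S_*$-types, yielding the contradiction. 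This shows the splitting tree must be bounded, i.e. some finite $q$ isolates a complete $\psi$-type consistent with $p$ — and the same conclusion transfers back to $B$ itself since it is about a finite amount of data.

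For part (2), I would enumerate $L(T) = \set{\psi_n(\x,\y_n) : n < \om}$ (possible since $T$ is countable) and build an increasing chain of finite types $p = p_0 \subseteq p_1 \subseteq \ldots$, where at stage $n$ I apply part (1) with the current finite type $p_n$ and the formula $\psi_n$ to obtain a finite $\psi_n$-type $q_n$ such that $p_n \cup q_n$ is consistent and implies a complete $\psi_n$-type over $B$; set $p_{n+1} = p_n \cup q_n$. At each stage I also need to record the isolating formula witnessing local isolation for $\psi_n$ — this is exactly what "implies a complete $\psi_n$-type" delivers, so the accumulated $q := \bigcup_{n} q_n$ has $|q| \le \aleph_0$ and $p \cup q$ is locally isolated (over $B$): for every $\psi$, its restriction is decided by finitely many formulas already present at the relevant finite stage, hence implied by a single conjunction, which is again a formula over $B$. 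Consistency of $p \cup q$ follows since it is an increasing union of consistent finite types. Then let $r = \tp(\bar c / B)$ for any realization $\bar c$ of $p \cup q$ in $\cC$; by construction $r$ is locally isolated, and $p \cup q \equiv r$ (the $q_n$'s were chosen so that $p \cup q$ already determines every $\psi$-type, i.e. every formula's truth value). Finally, $r \in S_*(B)$ is immediate from Lemma \ref{lem:isolated_star} (locally isolated types over a complete set are weakly orthogonal to $P$), and $r$ being $\aleph_1$-isolated follows because $p \cup q$ is a subtype of size $\le \aleph_0$ that is equivalent to $r$.

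I expect the main obstacle to be the counting/tree argument in part (1): one has to be careful that the "splitting tree" of $\psi$-types really produces genuinely distinct members of $S_*(B')$ and not merely distinct partial types, and that the size bookkeeping ($\aleph_0 < |B'|$, so Lemma \ref{le:typeextension} applies; $2^{|B'|}$ branches versus the stability bound $|S_*(B')| < 2^{|B'|}$) goes through cleanly over a single saturated model as licensed by Fact \ref{fct:satstable}. A secondary subtlety is the transfer back from the saturated $B'$ to an arbitrary stable $B$: since the statement of part (1) is first-order-ish in $B$ (it asserts existence of a finite $q$ with certain finitary consistency and implication properties), and stability is a property of the elementary-equivalence class, this transfer should be routine — but it needs to be stated, since $S_*$ itself is not invariant under $\equiv$ in an obvious way and one is implicitly using that "the obstruction is finitary."
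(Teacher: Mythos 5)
First, note that the paper itself does not prove this lemma: it is quoted verbatim as Lemma 7.1 of \cite{Us24}, so there is no in-paper proof to compare against. Judged on its own terms, your plan for part (2) is correct and complete modulo part (1): enumerating the countably many $\psi_n$, iterating part (1), taking $r$ to be the unique complete type implied by $p\cup\bigcup_n q_n$, and invoking Lemma \ref{lem:isolated_star} for $r\in S_*(B)$ all go through, and $\aleph_1$-isolation is immediate from $|p\cup q|\le\aleph_0$.

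The genuine gap is in the counting step of part (1). If no finite $\psi$-type $q$ consistent with $p$ decides the $\psi$-type over $B$, the splitting/Cantor--Bendixson argument you describe produces a binary tree of height $\om$ and hence only $2^{\aleph_0}$ distinct complete $\psi$-types (and, after applying Lemma \ref{le:typeextension}, only $2^{\aleph_0}$ members of $S_*(B')$). This does \emph{not} contradict stability: the bound from Definition \ref{dfn:startypes} is $|S_*(B')|\le |B'|^{|T|}=|B'|^{\aleph_0}\ge 2^{\aleph_0}$, and the sharper criterion of Fact \ref{fct:satstable} requires you to exhibit $2^{|B'|}$ types over a saturated $B'$ with $|B'|>\aleph_0$. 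To get that many types you must pass from the height-$\om$ tree to a tree of height $\lambda=|B'|$ (say $\lambda=\lambda^{<\lambda}$): the finitary splitting hypothesis gives consistent finite binary trees of every finite height, compactness then gives a consistent tree indexed by ${}^{<\lambda}2$ with all $2^\lambda$ paths consistent with $p$, and saturation of $B'$ lets you realize its $2^{<\lambda}=\lambda$ parameters inside $B'$; only then does the extension of each path to a member of $S_*(B')$ via Lemma \ref{le:typeextension} yield $2^{\lambda}\ge 2^{|B'|}$ types and the desired contradiction. You assert ``$2^{|B|}$ many branches'' but supply no mechanism for exceeding $2^{\aleph_0}$; this compactness-plus-saturation step (equivalently, the local rank formalism $R(p,\psi,2)=\infty\Rightarrow$ binary tree property of arbitrary height) is the actual content of part (1) and must be made explicit. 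Your remaining concerns (transfer of the finitary statement between $B$ and a saturated $B'\equiv B$, and distinctness of the extended $S_*$-types) are handled correctly.
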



\begin{de}\label{dfn:lprimary}
Let $N$ be a model, $P^N \subseteq B \subseteq N$.
\begin{enumerate}
\item
	We say that the sequence $\d = \{d_i:i<\alpha\} \subseteq N$ is a \emph{local construction} over $B$ in $N$ if for all $i<\al$, the type $tp\{d_i/B\cup\{d_j:j<i\})$ is locally isolated.

\item
	We say that a set $C \subseteq N$ is $N$ is \emph{ locally constructible} (l.c.) over $B$ in $N$ if 
	there is a local construction $\d$ over $B$ in $N$. 
	
\end{enumerate}
\end{de}

\begin{de}\label{dfn:primary}
Let $N$ be a model, $P^N \subseteq B \subseteq N$.
\begin{enumerate}
\item
	We say that the sequence $\d = \{d_i:i<\alpha\} \subseteq N$ is a $\lam$-construction over $B$ in $N$ if for all $i<\al$, the type $tp\{d_i/B\cup\{d_j:j<i\})$ is $\lambda$-isolated.

\item
	We say that a set $C \subseteq N$ is $N$ is \emph{$\lam$-constructible} over $B$ in $N$ if 
	there is a $\lam$-construction $\d$ over $B$ in $N$. 
	
	In particular, we say that $N$ is
	\emph{$\lam$-constructible} over $B$ if 
there is a construction $N=B\cup\{d_i:i<\al\}$ such that for all $i<\lam$ the type 
$tp\{d_i/B\cup\{d_j:j<i\})$ is $\lambda$-isolated. 
\item 
	We say that a model $N$ is \emph{$\lambda$-primary} over $B$ if it is $\lam$-constructible and $\lam$-saturated. 
\end{enumerate}
\end{de}

\begin{de}\label{dfn:prime}
Let $N$ be a model, $P^N \subseteq B \subseteq N$.
\begin{enumerate}
\item 

	We say that $N$ is \emph{locally atomic} (l.a.) over $B$ if 
for every $\bar d\subseteq N$, $tp(\bar d,B)$ is
locally isolated over $B$. 

\item 

	We say that $N$ is \emph{$\lam$-atomic} over $B$ if 
for every $\bar d\subseteq N$, $tp(\bar d,B)$ is
$\lambda$-isolated over some $B_{\bar d}\subseteq B, |B_{\bar
d}|<\lambda$.
%
%
\end{enumerate}
\end{de}

\begin{obs}\label{obs:localimpliestplus}
	Let $A$ be a set, and $p\in S(A)$ locally isolated. Then $p$ is $|T|^+$-isolated. 
\end{obs}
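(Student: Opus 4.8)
The plan is to produce the required small sub-type of $p$ directly from the local isolation data, taking one isolating formula per $L(T)$-formula. First I would fix, using the hypothesis that $p$ is locally isolated, for each $L(T)$-formula $\varphi(\bar x,\bar y)$ (where $\bar x$ is the tuple of variables of $p$ and $\bar y$ a finite tuple of variables) a formula $\theta_\varphi(\bar x,a_\varphi)\in p$ with $\theta_\varphi(\bar x,a_\varphi)\vdash p\restriction\varphi$. I then set $r:=\{\theta_\varphi(\bar x,a_\varphi):\varphi(\bar x,\bar y)\in L(T)\}$, so that $r\subseteq p$ by construction.

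The second step is the cardinality estimate: the set of $L(T)$-formulas $\varphi(\bar x,\bar y)$, with $\bar y$ ranging over finite tuples of variables, has size $|T|$, hence $|r|\le|T|<|T|^+$.

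The third step is to check that $r$ is equivalent to $p$, i.e. $r\vdash p$ (the reverse implication being immediate since $r\subseteq p$). Let $\chi(\bar x)\in p$ be arbitrary. Since $\chi$ is a formula over $A$ in the variables $\bar x$, it has the form $\varphi(\bar x,\bar a)$ for some $L(T)$-formula $\varphi(\bar x,\bar y)$ and some $\bar a\in A$; in particular $\chi\in p\restriction\varphi$, so $\theta_\varphi(\bar x,a_\varphi)\vdash\chi$, and since $\theta_\varphi(\bar x,a_\varphi)\in r$ we get $r\vdash\chi$. As $\chi$ was arbitrary, $r\vdash p$, so $r\equiv p$ and $p$ is $|T|^+$-isolated — in fact $|T|^+$-isolated over the subset $\{a_\varphi:\varphi(\bar x,\bar y)\in L(T)\}$ of $A$.

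There is no genuine obstacle here; this is a routine observation. The only point requiring a moment's attention is the bookkeeping in the third step: every formula of $p$ is a $\varphi$-formula for one of the $|T|$-many $L(T)$-formulas $\varphi(\bar x,\bar y)$, so that selecting a single formula from each $\varphi$-type of $p$ already pins down all of $p$.
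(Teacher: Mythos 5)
Your proof is correct and is exactly the routine argument the paper has in mind (the Observation is stated without proof there): one isolating formula $\theta_\varphi(\bar x,a_\varphi)$ per $L(T)$-formula $\varphi(\bar x,\bar y)$ yields a subset $r\subseteq p$ of size at most $|T|$ with $r\vdash p$, since every formula of $p$ lies in $p\restriction\varphi$ for some such $\varphi$. Nothing further is needed.
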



\subsection{Locally constructible existence property}

\begin{de}
 Let $A$ be a complete set. We say that $A$ has the \emph{locally constructible (l.c.) existence property} if there exists $M \models T$ with $P^M \subseteq A \subseteq M$ such that $M$ is locally constructible over $A$. 
\end{de}

\begin{lem}
 \label{lem:fin-stable}
 Let $A$ be a stable set, $\a$ a finite tuple. Then the set $A\a$ is also stable. 
 \end{lem}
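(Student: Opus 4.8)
\textbf{Proof proposal for Lemma \ref{lem:fin-stable}.}

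The plan is to use the characterization of stability from Fact \ref{fct:satstable}: it suffices to exhibit \emph{one} saturated model $B' \equiv A\a$ of cardinality $\lambda > |T|$ for which $|S_*(B')| \le \lambda^{|T|}$ (or even just $< 2^\lambda$). First I would fix a saturated $A' \equiv A$ of cardinality $\lambda = \lambda^{<\lambda} > |T|$, which exists and is stable by Fact \ref{fct:satstable}, and realize a copy of $\tp(\a/A)$ inside $\cC$ over $A'$, say by $\a'$; then $A'\a' \equiv A\a$ since $\tp(\a/A)$ determines $\tp(\a'/A')$ up to the automorphism carrying $A$ to $A'$. The set $A'\a'$ is again complete (by Lemma \ref{lem:fin-stable}'s hypothesis together with Lemma \ref{lem:fin-stable} being what we are proving — here I would instead invoke that $A'$ complete and $\a'$ finite implies $A'\a'$ complete, a standard consequence of Fact \ref{fct:complete_char}, or cite the analogous completeness-preservation fact), and has the same $P$-part as $A'$ since $\a'$ is finite and a finite type over a complete set with $P(x)$ is realized inside $P^{A'}$ — but $\a'$ is not in $P$, so more precisely $P^{A'\a'} = P^{A'}$ automatically as $\a'$ contributes no new $P$-elements. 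Finally one passes to a saturated $B' \succeq A'\a'$; but I must be careful that $B'$ is saturated \emph{over $P$} in the relevant sense, so instead I would take $B'$ saturated extending $A'$ with $\a' \in B'$ and $P^{B'} = P^{A'}$, available by completeness of $A'$ and Corollary \ref{co:complete_exnetdabletomodel} / saturation, noting $A'\a' \equiv A\a$ still when we restrict.

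The key estimate: I claim $S_*(A'\a')$ injects into $S_*(A')$ — more precisely, there is a bound $|S_*(A'\a')| \le |S_*(A')| \cdot 2^{|T|} \le \lambda^{|T|}$. The idea is that a type $\tp(\c/A'\a') \in S_*(A'\a')$ is determined by the pair $\big(\tp(\c\a'/A'), \text{a finite amount of data}\big)$: since $\a'$ is a \emph{finite} tuple, $\tp(\c/A'\a')$ is recovered from $\tp(\c\a'/A')$, and the latter lies in $S_*(A')$ \emph{provided} $\c\a'$ adds no new $P$-points over $A'$ and $A'\c\a'$ is complete. The first condition is fine since $\a'$ is finite and $\c$ adds no $P$-points over $A'\a'$ (by the definition of $S_*$), so together they add none over $A'$. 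The completeness of $A'\c\a'$ follows from completeness of $A'\a'\c$ which is part of membership in $S_*(A'\a')$. Hence $\c \mapsto \tp(\c\a'/A')$ is a well-defined map $S_*(A'\a') \to S_*(A')$, and it is at most $2^{|\a'|\cdot|T|} = 2^{|T|}$-to-one (in fact injective once we also record which coordinates are $\a'$), since two types over $A'\a'$ with the same union-type over $A'$ and the same tuple $\a'$ coincide. Therefore $|S_*(A'\a')| \le |S_*(A')| \cdot \aleph_0 \le \lambda^{|T|}$, and stability of $A\a$ follows from Fact \ref{fct:satstable}.

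The main obstacle I anticipate is \textbf{bookkeeping around the $P$-part and completeness} rather than the cardinal arithmetic, which is essentially trivial once the injection is set up. Specifically: (1) verifying that $A'\a'$ is itself complete (so that $S_*(A'\a')$ is even defined) — this should follow from Fact \ref{fct:complete_char} since adjoining a finite tuple to a complete set with the same $P$-part preserves the defining-formula condition, but it requires a short argument; (2) making sure that when we pass to the saturated representative we do not enlarge $P^{A'}$, so that "stability of $A$" genuinely transfers — this is where Corollary \ref{co:complete_exnetdabletomodel} and the hypothesis that $A$ (hence $A'$) is complete with $P^{A'}$ suitably compact get used; and (3) confirming that the map $\tp(\c/A'\a') \mapsto \tp(\c\a'/A')$ lands in $S_*(A')$ and not merely in $S(A')$, i.e. that weak orthogonality to $P$ is preserved under "absorbing" the finite tuple $\a'$ into the base — which again reduces to the observation that $\c\a'$ contributes no $P$-elements over $A'$ and the relevant unions are complete. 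Once these three points are nailed down, the lemma is immediate.
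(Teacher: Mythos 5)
Your core counting argument is exactly the paper's: reduce to a single saturated representative via Fact \ref{fct:satstable}, send $\tp(\c/A'\a')\in S_*(A'\a')$ to $\tp(\a'\c/A')\in S_*(A')$, observe that this map is injective (your ``$2^{|T|}$-to-one'' hedge is unnecessary --- a formula over $A'\a'$ about $\c$ is literally a formula over $A'$ about the concatenated tuple, so equal concatenated types force equal original types), and invoke stability of $A$ to bound the target by $|A'|^{|T|}$. So the heart of the proof matches the paper's.

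The genuine weak spot is your construction of the saturated copy of $A\a$. You fix a saturated $A'\equiv A$ and propose to ``realize a copy of $\tp(\a/A)$ over $A'$'', justified ``up to the automorphism carrying $A$ to $A'$'' --- but $A'\equiv A$ is mere elementary equivalence of substructures and supplies no such map, so there is no canonical way to transfer a type with parameters in $A$ to one with parameters in $A'$. Your subsequent patch (pass to a saturated $B'\supseteq A'$ containing $\a'$) makes matters worse: that $B'$ is no longer of the form $A'\a'$, and there is no reason it should be elementarily equivalent to $A\a$. The paper sidesteps all of this with one clean device: inside a model $M\supseteq A\a$, expand the language by unary predicates $Q$ for $A$ and $Q_1$ for $B=A\a$, take $M'$ saturated in the expanded language, and read off $A'=Q^{M'}$, $B'=Q_1^{M'}$, $\a'=B'\setminus A'$ (still finite). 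This simultaneously hands you a saturated $B'\equiv B$, an $A'\equiv A$, and the controlled partition, with all the bookkeeping about the $P$-part and completeness that you (rightly) flagged as the obstacle inherited automatically from elementary equivalence in the expanded language. Replace your transfer step with this trick and the rest of your argument goes through as written.
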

\begin{proof}
 Let $B = A\a$. By Fact \ref{fct:satstable}, it is enough to verify stability for $B' \equiv B$ saturated. Let $M$ be a model containing $B$. Expand the language by an additional unary predicates $Q$ for $A$ and $Q_1$ for $B$. Let $M'$ be saturated in the expanded language, and let $B' = Q_1^{M'}$, considered as a substructure of $M'$ in the original language. Clearly $B' \equiv B$, and is saturated. Similarly, $A'  = Q^{M'}$ is elementarily equivalent to $A$ (in the original language). Let $\a' = B'\setminus A'$, so it is still finite (of the same length as $\a$, which we will call $n$). Every type $\tp(\b/B') \in S^m_*(B')$ gives rise to a type of a tuple $\tp(\a\b/A') \in S^{n+m}_*(A')$, and two different types $\tp(\b_1/A')$, $\tp(\b_2/A')$ give rise to two different such types over $A'$. By stability of $A$, there are $\le |A'|^{|T|}$ types in $S^{n+m}_*(A')$; hence the same is the case for $S^m_*(B')$.
\end{proof}

\begin{lem}
 \label{lem:count-li}
 Assume that $T$ is countable, and let $A$ be a countable stable set. Then $A$ has the locally constructible existence property. 
\end{lem}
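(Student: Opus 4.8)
The plan is to construct the desired model $M \models T$ with $P^M \subseteq A \subseteq M$ as a locally constructible extension of $A$, built up in $\omega$ steps. We start with the complete countable stable set $A = A_0$. Since $A$ is complete, by Corollary \ref{co:complete_exnetdabletomodel} (using that $P^A$, being countable, is trivially $|A|$-compact in the relevant sense — or, more carefully, after first extending $A$ inside $P^{\cC}$ to make $P^A$ an $\aleph_0$-saturated model of $T^P$, which does not change the theory and keeps everything countable) there is \emph{some} model $M$ with $P^M \subseteq A \subseteq M$; the issue is to produce one that is moreover locally constructible over $A$. To do this, I will enumerate a candidate ``skeleton'' of such an $M$ and use the key combinatorial input, Lemma \ref{lem:li}(2): over any stable set $B$, any finite type can be extended to a locally isolated type in $S_*(B)$ by adding only countably many formulas. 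The point of landing in $S_*(B)$ is precisely that such a realization does not enlarge the $P$-part.

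The construction proceeds as follows. Build an increasing chain of countable complete stable sets $A = A_0 \subseteq A_1 \subseteq \cdots$ and a local construction $\langle d_i : i < \omega \rangle$ enumerating $\bigcup_n A_n \setminus A$, maintaining: (a) each $A_n$ is complete (so by Lemma \ref{lem:fin-stable} and an $\omega$-iteration, each $A_n$ is stable — note $A_{n+1}$ is $A_n$ together with finitely many new points at each stage, so finite-tuple stability suffices); (b) $P^{A_n} = P^A$ for all $n$; (c) $\tp(d_i / A \cup \{d_j : j < i\})$ is locally isolated. Using a bookkeeping argument, at each stage we handle one ``requirement'' needed to make the final union $M = \bigcup_n A_n$ a model of $T$: given a formula $\varphi(\x, \b)$ over the current set $B = A_n$ with $\cC \models \exists \x\, \varphi(\x,\b)$, we must put a witness into $M$. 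By Lemma \ref{lem:li}(2) applied to the finite type $\{\varphi(\x,\b)\}$ over the stable set $B$, there is a countable $q$ with $\{\varphi(\x,\b)\} \cup q$ consistent and equivalent to some locally isolated $r \in S_*(B)$; realize $r$ by a tuple $\bar d$, set $A_{n+1} = B \bar d$. Because $r \in S_*(B)$, we have $P \cap (B \cup \bar d) = P \cap B = P^A$, so (b) is preserved; by Lemma \ref{lem:isolated_star}-type reasoning together with completeness of $B$, the set $B\bar d$ is complete, preserving (a); and local isolation of $r$ gives (c). A standard enumeration of all pairs (formula, parameter tuple from the set being built) ensures every existential statement with parameters in $M$ is witnessed in $M$, so $M \prec \cC$ by the Tarski--Vaught test, and $P^M = \bigcup_n P^{A_n} = P^A \subseteq A$.

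Finally, $M$ is locally constructible over $A$ by construction: the enumeration $\langle d_i : i < \omega \rangle$ is exactly a local construction over $A$ in $M$ in the sense of Definition \ref{dfn:lprimary}, since each new block is added realizing a locally isolated type over $A$ together with the previously added elements. Hence $A$ has the locally constructible existence property. \textbf{The main obstacle} is the simultaneous maintenance of the invariants: one must be sure that adding a block realizing a locally isolated $\ast$-type keeps the set complete (so that ``stable'', ``$S_*$'', and Lemma \ref{lem:li} remain available at the next stage) and keeps the $P$-part fixed — both of these hinge on the fact (Lemma \ref{lem:isolated_star} and the characterizations in Fact \ref{fct:complete_char}) that locally isolated types over complete sets are weakly orthogonal to $P$, and one should check that completeness is preserved under adding a realization of such a type, not merely that the type omits new $P$-elements. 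The bookkeeping to hit all required existential witnesses while only ever adding countably many formulas per step is routine given countability of $T$ and of each $A_n$.
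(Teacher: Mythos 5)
Your proposal is correct and takes essentially the same route as the paper's proof: enumerate the existential requirements and, at each successor stage, use Lemma \ref{lem:fin-stable} plus Lemma \ref{lem:li}(2) to add a realization of a locally isolated type in $S_*$ of the current stable set, then conclude by Tarski--Vaught. The "main obstacle" you flag (that adding such a realization preserves completeness and the $P$-part) is in fact built into Definition \ref{dfn:startypes}(i), since $\tp(\bar c/B)\in S_*(B)$ by definition requires $B\cup\bar c$ to be complete with $P\cap(B\cup\bar c)=P\cap B$.
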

\begin{proof}
  	Let $\set{\ph_i(x,\a_i)\colon i<\om}$ list all formulae over $A$. Construct an increasing continuous sequence of sets $A_i$ such that:
	\begin{enumerate}	
	\item 
		$A_0 = A$
	\item
		$A_{i+1} = A_i \cup \set{b_i}$
	\item
		$\models \ph_i(b_i, \a_i)$
	\item
		$\tp(b_i/A_i) \in S_*(A_i)$ and is locally isolated
	\end{enumerate}
	This can be done by induction using Lemma \ref{lem:li} for the successor stages, since all $A_i$ are stable by Lemma \ref{lem:fin-stable}. Note that clause (iv) for $b_i, A_i$ implies by induction that all $A_j$ are complete, and $P^{A_j} = P^A$.  

\end{proof}

\section{Consequences of stability of models}

Again, in order to keep the paper relatively self-contained, in this section, we recall some of the main concepts and results from our previous article \cite{ShUs322a}, that were proven under the assumption that every model $M \models T$ is stable over $P$. Since our ultimate goal in this paper is to investigate consequences of $n$-stability for all $n<\om$ (and, as we shall see, stability of models is merely $1$-stability), this is a ``harmless'' assumption for the current investigation. Moreover, in \cite{Sh234}, the first author has shown that the negation of this assumption implies (consistent) non-structure. So even when it comes to approaching Conjecture \ref{con:gaifman1}, at least a ``consistent'', or, rather, ``absolute'' version thereof, this hypothesis is ``justified'' by the non-structure side of the global picture. 

So for the rest of the paper, we assume, in addition to Hypothesis \ref{hyp:1}: 

\begin{hyp}\label{asm:2} (\underline{Hypothesis 2}). 
	Every (equivalently, some) $M\prec \cC$ is stable over $P$ (as in Definition \ref{dfn:startypes}(ii)).
	
	In other words, $|S_*(M)| \le |M|^{|T|}$ for all $M\models T$.
\end{hyp}

All the results stated below have been proven assuming this Hypothesis.

\begin{theorem}\label{4.1}\label{15} (Theorem 7.2 in \cite{ShUs322a}) If $A$ is stable, $|A|\geq2$, then for every $\psi(\bar
x,\bar y)$ there is a quantifier free $\Psi _\psi (\bar y,\bar z)\in
L(T)$ such that whenever $B\equiv A$ and $ p\in S_*(B)$ then 
$p| \psi$ is
defined by $\Psi _\psi (\bar y,\bar d)$ for some $\bar d\subseteq B$,
i.e. $p| \psi=\{\psi(\bar x,\bar a):\bar a\in B, B\models
\Psi_\psi(\bar a,\bar d)\}$.
\end{theorem}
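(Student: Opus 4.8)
The plan is to prove the statement by reducing, via the stability hypothesis, the potentially infinite data of the restriction $p|\psi$ to a uniformly bounded amount of information, and then to extract a single definability schema that works across all $B \equiv A$ simultaneously.

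\textbf{Step 1: Reduce to a saturated model and count $\psi$-types.} First I would fix $\psi(\bar x,\bar y)$ and, using Fact \ref{fct:satstable}, pass to a single saturated $B \equiv A$ with $|B| > |T|$, noting that stability of $A$ gives $|S_*(B)| \le |B|^{|T|}$. The key point is that for $p \in S_*(B)$, the $\psi$-type $p|\psi = \{\psi(\bar x,\bar a)^{t(\bar a)} \colon \bar a \in B\}$ (with $t(\bar a) \in \{0,1\}$) records, for each $\bar a \in B$, whether $\psi(\bar x, \bar a)$ is in $p$. Since $p$ is weakly orthogonal to $P$ and $B$ is complete, one wants $p|\psi$ to be definable over $B \cap P$. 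The heart of the matter is to see that $p|\psi$ is in fact \emph{uniformly} definable: there should be finitely many (or boundedly many, depending on the counting) candidate defining formulae.

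\textbf{Step 2: Extract a uniform defining schema.} The standard stability-theoretic move is this: suppose toward contradiction that for arbitrarily complex formulae $\Psi(\bar y, \bar z)$ one still cannot capture all the $\psi$-types arising from $S_*(B)$. Then, using a tree/coding argument (building a binary tree of instances of $\psi$ distinguished by members of $P$, exploiting Hypothesis \ref{hyp:1} that every definable subset of $P$ comes from $T^P$, and Observation \ref{obs:uniformdef} which already gives \emph{a} defining formula $\Psi_\psi(\bar y, \bar c)$ with $\bar c \subseteq P^\cC$ for each such type), one produces $2^{|B|}$ distinct types in $S_*(B)$, contradicting $|S_*(B)| \le |B|^{|T|} < 2^{|B|}$. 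This forces the defining formulae to be drawn, up to the parameter $\bar d$, from a single formula $\Psi_\psi(\bar y, \bar z)$ — and since $T$ has been Morley-ized (quantifier elimination down to predicates, per the discussion after Hypothesis \ref{hyp:1}), we may take $\Psi_\psi$ quantifier-free. The parameter $\bar d$ can be taken in $B$ itself rather than in $B \cap P$ only when $|B| \ge 2$, which explains that hypothesis (if $|B| = 1$ there may not be room; more precisely, weak orthogonality forces the defining parameter into $P^B$, and we need at least to name it inside $B$).

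\textbf{Step 3: Transfer to arbitrary $B \equiv A$.} Having fixed $\Psi_\psi(\bar y,\bar z)$ for the saturated representative, I would argue that the statement ``for every $p \in S_*(B)$, $p|\psi$ is defined by $\Psi_\psi(\bar y,\bar d)$ for some $\bar d \subseteq B$'' is preserved under elementary equivalence of the substructures $B$. This needs a small argument since $S_*(B)$ is not a first-order-expressible object, but one can either run the same counting argument directly over an arbitrary $B \equiv A$ (using that $A$ stable means \emph{all} $B' \equiv A$ satisfy the cardinality bound, by Definition \ref{dfn:startypes}(ii)), or use a compactness/saturation sandwich: given $B \equiv A$ and $p \in S_*(B)$, embed $B$ into a saturated $B^* \equiv A$, extend $p$ (using that completeness and weak orthogonality behave well under the Small Type Extension Lemma, Lemma \ref{le:typeextension}), get the defining formula $\Psi_\psi(\bar y,\bar d^*)$ over $B^*$, and then descend the parameter $\bar d^*$ to $B$ using that the defining formula for a $\psi$-type over $B$ must have parameters determining a $B$-definable family. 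I expect \textbf{the main obstacle} to be precisely Step 2 — making the tree-coding argument produce genuinely distinct \emph{weakly orthogonal} types (one must stay inside $S_*$, keeping the $P$-part fixed and the sets complete, which is exactly the subtlety that distinguishes this theory from ordinary stability), together with the bookkeeping that lets the parameter be absorbed into a single quantifier-free schema $\Psi_\psi$ uniform in $B$.
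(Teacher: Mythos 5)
You should first be aware that this paper contains no proof of Theorem \ref{4.1}: it is imported verbatim as Theorem 7.2 of \cite{ShUs322a}, and the surrounding section states explicitly that it is only recalling results proven in that earlier article. So there is no in-paper argument to compare yours against, and I can only assess your sketch on its own terms.

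On those terms there are genuine gaps, concentrated exactly where you predict but do not resolve. The central problem is that Step 2 runs two different arguments together. The binary-tree construction is the standard way to show that a \emph{single} non-definable $\psi$-type forces $2^{\lambda}$ many $\psi$-types over some set of size $\lambda$; it does not follow from the failure of \emph{uniform} definability, since every $p|\psi$ could individually be definable while the defining formulas fail to fall under one schema. The classical route is: (a) counting (or a rank bound) gives that each $p|\psi$ is definable; (b) compactness shows that finitely many defining schemata suffice; (c) the hypothesis $|A|\ge 2$ is used to code which of the finitely many schemata applies into extra parameters, merging them into a single $\Psi_\psi$ --- this coding trick, not your ``room to name the parameter inside $B$'' remark, is what $|A|\ge 2$ is for. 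Your sketch supplies none of (a)--(c) and instead asserts a direct tree argument from non-uniformity, which does not work as stated. Moreover, even for (a) the tree must produce $2^{|B|}$ types \emph{in $S_*(B)$}, i.e.\ complete types that keep $B\cup\bar c$ complete and fix the $P$-part; the only extension tool available (Lemma \ref{le:typeextension}) handles partial types of cardinality $<|B|$, whereas a branch of the tree is a full $\psi$-type of cardinality $|B|$, so extending branches into $S_*(B)$ needs a separate, genuinely new argument --- you name this as ``the main obstacle'' and then leave it open. Finally, Step 3 as written is circular: to pull the parameter $\bar d^*\subseteq B^*$ down to $B$ you appeal to the defining formula ``having parameters determining a $B$-definable family,'' which presupposes definability over $B$, the very thing being proved; the standard fix is to make the defining schema local and canonical (a bounded $\psi$-rank, or a positive Boolean combination of instances $\psi(\bar c_i,\bar y)$ with $\bar c_i$ realizing conjugates of $p$), so that its existence transfers to every $B\equiv A$ automatically. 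As it stands the proposal is a reasonable map of where a proof should live, but the load-bearing steps are missing.
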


\begin{theorem}\label{17}\label{stable}(Theorem 7.3 in \cite{ShUs322a}) Let $A$ be complete and $\lambda =\lambda^{<\lambda}$.
The following are equivalent:
\begin{itemize}
\item[(i)]  $A$ is stable.
\item[(ii$)_\lambda$ ] If $A^\prime\equiv A$ is $\lambda$-saturated,
$\lambda =|A^\prime|>|T|$, then over $A^\prime$ there is a
$\lambda$-primary 
model
$M$.
\item[(iii$)_\lambda$] If $A^\prime\equiv A$ is $\lambda$-saturated,
$\lambda >|T|$, then every $m$-type $p$ over $A$, $|p|<\lambda $ can be
extended to a $\lambda$-isolated $q\in S_*(A^\prime)$.
\end{itemize}
\end{theorem}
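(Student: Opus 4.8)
\textbf{Proof proposal for Theorem \ref{stable}.}

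The plan is to prove the cycle of implications (i) $\Rightarrow$ (ii$)_\lambda$ $\Rightarrow$ (iii$)_\lambda$ $\Rightarrow$ (i), which is the standard shape of such equivalences in classical stability theory, adapted to our setting. The role of stability throughout is to guarantee that the various isolated-type extensions we build exist at each step of the construction, via Lemma \ref{lem:li} and its iteration.

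For (i) $\Rightarrow$ (ii$)_\lambda$: fix $A' \equiv A$ with $\lambda = |A'| > |T|$ and $A'$ $\lambda$-saturated (so in particular $P^{A'}$ is $\lambda$-compact). We build the $\lambda$-primary model over $A'$ by a construction of length $\lambda$, enumerating in advance the $<\lambda$ ``demands'' we need to meet: for $\lambda$-saturation, we need to realize all types over $<\lambda$-sized subsets of the set built so far, so we do the usual bookkeeping with a $\lambda^+$-long construction visiting each such demand cofinally. At a successor stage, given the complete set $B$ built so far (which is stable by Lemma \ref{lem:fin-stable} together with an iteration argument, or rather by the fact that $A'$ stable and the added tuples keep it a complete stable set — here one needs the analogue of Lemma \ref{lem:fin-stable} for $<\lambda$-sized increments, which follows from stability and Fact \ref{fct:satstable}), and a type $p$ over a $<\lambda$-sized subset $B_0$ of $B$ that we wish to realize, we apply Lemma \ref{lem:li}(2) to extend $p$ to a locally isolated $r \in S_*(B_0)$, which by Observation \ref{obs:localimpliestplus} is $|T|^+$-isolated, hence $\lambda$-isolated; we realize $r$ and add the realizing tuple. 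By Lemma \ref{lem:isolated_star} (local isolation forces weak orthogonality to $P$) the $P$-part never grows, so completeness and the $P$-part are preserved along the construction, and the union $M$ is a model (by Corollary \ref{approx}, since $P^M = P^{A'}$ is $|M|$-compact and $M$ is complete) which is $\lambda$-constructible and $\lambda$-saturated, i.e., $\lambda$-primary over $A'$.

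For (ii$)_\lambda$ $\Rightarrow$ (iii$)_\lambda$: given $A'$ as in (iii) and an $m$-type $p$ over $A$ with $|p| < \lambda$, first extend $A$ inside $\cC$ to a $\lambda$-saturated $A'' \equiv A$ of size $\lambda^{<\lambda} = \lambda$ (here we use $\lambda = \lambda^{<\lambda}$, WLOG by moving to such an $A''$ — or one argues directly over $A'$), let $M$ be $\lambda$-primary over $A''$ from (ii$)_\lambda$; then $p$ is realized in $M$ by some $\bar c$ (since $M \supseteq A''$ is $\lambda$-saturated and $|p|<\lambda$), and $\tp(\bar c / A'')$ is $\lambda$-isolated over a $<\lambda$-sized subset of $A''$ because $M$ is $\lambda$-constructible (the type of any tuple over the base of a $\lambda$-construction is $\lambda$-isolated over a small subset — this is the standard ``atomic over the construction base'' lemma for $\lambda$-isolation). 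Finally $\tp(\bar c/A'') \in S_*(A'')$: indeed $P^M = P^{A''}$ so $P \cap (A'' \cup \bar c) = P \cap A''$, and $A'' \cup \bar c$ is complete being a subset of the model $M$ with $P^M \subseteq A''\cup\bar c$ (Observation \ref{obs:complete}). For (iii$)_\lambda$ $\Rightarrow$ (i): by Fact \ref{fct:satstable} it suffices to bound $|S_*(A')|$ for one $\lambda$-saturated $A' \equiv A$ of size $\lambda > |T|$; but (iii$)_\lambda$ says every $<\lambda$-type extends to a $\lambda$-isolated member of $S_*(A')$, and in particular every $q \in S_*(A')$ that is itself realized is $\lambda$-isolated, i.e., determined by a subset of size $<\lambda$; counting, $|S_*(A')| \le |A'|^{<\lambda} \cdot 2^{|T|} = \lambda^{<\lambda} = \lambda \le |A'|^{|T|}$ — more carefully, one runs the standard argument that ``every type is $\lambda$-isolated'' plus $\lambda = \lambda^{<\lambda}$ yields $|S_*(A')| \le \lambda$, which is $< 2^{|A'|}$, so Fact \ref{fct:satstable} gives stability.

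The main obstacle I expect is the successor step of the construction in (i) $\Rightarrow$ (ii$)_\lambda$: one must verify that the set built so far remains \emph{complete and stable} after adding $<\lambda$-many tuples, not just finitely many, so that Lemma \ref{lem:li} continues to apply; Lemma \ref{lem:fin-stable} handles only finite increments, and the passage to $<\lambda$-sized (or even just countable) increments requires either a union/chain argument for completeness (each tuple was added by a type in $S_*$, so completeness propagates through the continuous union, as noted at the end of the proof of Lemma \ref{lem:count-li}) together with a separate argument that stability is preserved — presumably by the same expansion-by-predicates trick as in Lemma \ref{lem:fin-stable}, now with a predicate for the $<\lambda$-sized increment, reducing $|S_*|$ of the larger set to $|S_*|$ of a saturated elementary-equivalent copy of $A$. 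Getting this bookkeeping exactly right (ensuring $\lambda$-saturation of $M$ while keeping every initial segment complete, stable, with unchanged $P$-part) is the delicate technical heart; the rest is routine transcription of the classical Morley-style argument into the ``over $P$'' framework.
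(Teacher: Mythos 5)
First, a framing remark: the paper contains no proof of Theorem \ref{stable} --- it is quoted verbatim as Theorem 7.3 of \cite{ShUs322a} --- so your argument can only be judged on its own terms. Its architecture (the cycle (i) $\Rightarrow$ (ii)$_\lambda$ $\Rightarrow$ (iii)$_\lambda$ $\Rightarrow$ (i), a Morley-style construction of the primary model, a counting argument at the end) is the expected one, but there is a genuine hole exactly at the hardest point, and it is structural. At a successor stage of your construction for (i) $\Rightarrow$ (ii)$_\lambda$ you must extend a type $p$ with $|p|<\lambda$ --- in general infinite --- over the set built so far to a $\lambda$-isolated member of $S_*$ of that set. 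Lemma \ref{lem:li} cannot supply this: both parts require $p$ to be \emph{finite} (and $T$ countable, whereas Theorem \ref{stable} carries no countability hypothesis), and the set $B_0$ over which you invoke it is an arbitrary $<\lambda$-sized subset, which need be neither complete, nor stable, nor $\subseteq_t$ the current stage, so neither the lemma's hypotheses nor the transfer of the isolated type to the full current set (via Fact \ref{47} or Lemma \ref{isolatedimplies}) are available. The missing statement --- every consistent type of size $<\lambda$ over a suitable stable set extends to a $\lambda$-isolated type in $S_*$ --- is essentially (iii)$_\lambda$ itself, needed moreover over the non-saturated intermediate sets of the construction; it requires its own argument from stability (for each $\psi$, some extension of $p$ by finitely many formulas decides the $\psi$-type, since otherwise one builds a binary tree of formulas and hence too many types, contradicting Definition \ref{dfn:startypes}(ii); weak orthogonality to $P$ then comes from Lemma \ref{lem:isolated_star} together with the Small Type Extension Lemma \ref{le:typeextension}). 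Deriving (iii)$_\lambda$ afterwards from (ii)$_\lambda$, as you do, therefore leaves the cycle effectively circular at this step. The obstacle you yourself flag (preservation of stability and completeness under $<\lambda$-sized increments) is real but secondary to this one.

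Second, your (iii)$_\lambda$ $\Rightarrow$ (i) misreads (iii)$_\lambda$: it asserts that every small type has \emph{some} $\lambda$-isolated extension in $S_*(A')$ --- a density statement --- not that every $q\in S_*(A')$ is $\lambda$-isolated, so the count $|S_*(A')|\le\lambda^{<\lambda}$ does not follow as written. You would need either to show that instability produces a small type with no $\lambda$-isolated completion in $S_*(A')$ (e.g., a cut in an order-property configuration inside the saturated $A'$), or to route the counting through definability of $\psi$-types as in Theorem \ref{4.1}. The remaining steps --- (ii)$_\lambda$ $\Rightarrow$ (iii)$_\lambda$ via $\lambda$-atomicity of $\lambda$-constructible models, and the preservation of the $P$-part along the construction via Lemma \ref{lem:isolated_star} and Corollary \ref{approx} --- are standard and correctly identified, modulo the glossed-over reduction from an arbitrary $\lambda$-saturated $A'$ to one of cardinality $\lambda$.
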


\smallskip

We now recall some new concepts introduced in \cite{ShUs322a}, and their properties (see section 8 of \cite{ShUs322a}) for details). 

\begin{de}
$A\subseteq _t B$ if for every $\bar a\in A,\bar b\in B$ 
and $\psi\in L(T)$ such that $\models\psi(\bar b,\bar a)$ there is  
some $\bar
b^\prime\subseteq A$ such that $\models \psi(\bar b^\prime,\red{\bar a})$
\end{de}

\begin{de}

Suppose $A$ is stable, $p\in S_*(A)$ and $A\subseteq _t B$.
Then $q\in S(B)$ is a {\it stationarization} of $p$ over $B$ if 
for every $\psi\in L$ there is some definition $\Psi _\psi(\bar y,\bar a
_\psi)$ with $\bar a_\psi\subseteq A$ that defines both $p_\psi$ and $q _\psi$.
\end{de}

\begin{no}\label{no:ind}
\begin{enumerate}
\item
	We write $\a \ind_A B$ if $A$ is stable and $q=\tp(\a/B)$ is a stationarization of $p = \tp(a/A)$ (so in particular $p \in S_*(A)$ and $A \subseteq_t B$). In this case, will also write $q = p|B$.
\item
	We write $C \ind_A B$ if for every $\a \in C$  we have $\a \ind_A B$.
\end{enumerate}
\end{no}

\begin{lem}\label{station}\label{20} (\cite[8.5,8.6,8.7]{ShUs322a}) Assume $A$ is stable, $A\subseteq _t B$ and
$p\in S_*(A)$. Then:
\begin{itemize}
\item[(i)] $p$ has a stationarization $q$ \red{over $B$}.
\item[(ii)] It is unique: We can replace``some $\Psi _\psi(\bar y,\bar a
_\psi)$" by ``every...", so $q$ does not depend on its choice. 
\item[(iii)] If $B$ is complete, $q\in S_*(B)$.
\item[(iv)] If $A\subseteq _t B, A$ stable, $\bar c\subseteq\bar b,tp(\bar b/A)\in
S_*(A)$, then $tp(\bar c/A)\in S_*(A)$ and the stationarization of
$tp(\bar b/A)$ over $B$ includes the stationarization of $tp(\bar c/A)$
over $B$.
\end{itemize}
\end{lem}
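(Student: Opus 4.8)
The plan is to reduce everything to the $\psi$-by-$\psi$ level, using that $p\in S_*(A)$ means the $\psi$-part of $p$ is definable over $A\cap P$ (by completeness of $A$ and Fact \ref{obs:complete_characterization}), and that $A\subseteq_t B$ lets us ``copy'' each such definition to a type over $B$. Concretely, for each $\psi(\x,\y)$ let $\Psi_\psi(\y,\bar a_\psi)$ with $\bar a_\psi\subseteq A$ (in fact $\bar a_\psi\subseteq A\cap P$ when $P(x)\in p$, but in general just $\bar a_\psi\subseteq A$ by Theorem \ref{4.1}) define $p_\psi$. Set $q_\psi=\{\psi(\x,\bar b):\bar b\subseteq B,\ \models\Psi_\psi(\bar b,\bar a_\psi)\}$ and let $q=\bigcup_\psi q_\psi$. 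For (i) the content is that $q$ is a consistent complete type over $B$: finite consistency reduces to finitely many $\psi$'s, and here one uses $A\subseteq_t B$ to pull a realization back into $A$ — given finitely many instances and negated instances from the $q_\psi$'s, the corresponding existential statement (with parameters from $B$) would, by $\subseteq_t$, have a witness over $A$; but over $A$ that configuration is exactly a finite subtype of $p$, which is consistent. Completeness of $q$ is immediate since for each $\bar b\subseteq B$ and each $\psi$, either $\Psi_\psi(\bar b,\bar a_\psi)$ or $\neg\Psi_\psi(\bar b,\bar a_\psi)$ holds.

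For (ii), uniqueness: suppose $\Psi'_\psi(\y,\bar a'_\psi)$ with $\bar a'_\psi\subseteq A$ also defines $p_\psi$. Then $\Psi_\psi(\y,\bar a_\psi)$ and $\Psi'_\psi(\y,\bar a'_\psi)$ agree on all $\y\subseteq A$; I would argue they must agree on all $\y\subseteq B$ as well, again via $A\subseteq_t B$: if some $\bar b\subseteq B$ separated them, the formula ``$\Psi_\psi(\y,\bar a_\psi)\wedge\neg\Psi'_\psi(\y,\bar a'_\psi)$'' (or its dual) is satisfiable with parameters $\bar a_\psi\bar a'_\psi\subseteq A$, hence by $\subseteq_t$ has a witness $\bar b'\subseteq A$, contradicting that both define the same $p_\psi$. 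So the definitions coincide over $B$, hence $q$ does not depend on the choice; in particular any definition works, which is the stated strengthening.

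For (iii): assume $B$ is complete. We must show $q\in S_*(B)$, i.e., $P\cap(B\cup\c)=P\cap B$ for $\c\models q$ and $B\cup\c$ is complete; by Fact \ref{obs:complete_characterization} it suffices to show each $\psi$-type $tp_\psi(\bar b\c/P^\cC)$ is definable over $B\cap P$ and $B\cap P\prec P^\cC$ (the latter holds since $B$ is complete). The key point is that when $P(x)\in q$ the defining parameter $\bar a_\psi$ can be taken in $A\cap P\subseteq B\cap P$ (since $p\in S_*(A)$ gives definability over $A\cap P$ by Fact \ref{obs:complete_characterization}), so $q$ adds no new elements of $P$ and the relevant definitions live over $B\cap P$; combined with completeness of $B$ this gives completeness of $B\cup\c$. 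For (iv): given $\bar c\subseteq\bar b$ with $tp(\bar b/A)\in S_*(A)$, first note $tp(\bar c/A)\in S_*(A)$ because any element of $P$ in the definable closure configuration of $\bar c$ is one for $\bar b$, and completeness of $A\bar c$ follows from completeness of $A\bar b$ (a subtuple of a complete set over $A$ is complete over $A$ — this is a routine consequence of Fact \ref{fct:complete_char}). Then the stationarization is computed $\psi$-by-$\psi$ by the same defining schemes, and restricting from $\bar b$ to $\bar c$ just restricts the relevant formulas, so the stationarization of $tp(\bar b/A)$ over $B$ literally contains that of $tp(\bar c/A)$ over $B$.

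The main obstacle I expect is item (i), specifically verifying that the ``copied'' definitions really do yield a \emph{consistent} type — i.e., that $A\subseteq_t B$ is exactly strong enough to transfer finite consistency from $p$ over $A$ to $q$ over $B$. One has to be careful that the existential statements being pulled back into $A$ use only parameters from $A$ (which they do: the $\bar a_\psi$'s are in $A$), and that Theorem \ref{4.1}'s uniform quantifier-free defining schemes are available so the pulled-back configuration is genuinely a finite fragment of the type $p$ and not something larger; stability of $A$ is used here precisely to have these uniform definitions. Clauses (ii)--(iv) are then comparatively routine bookkeeping on top of (i).
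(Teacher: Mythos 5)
First, a point of reference: this lemma is not proved in the present paper at all --- it is imported verbatim from the companion paper (items 8.5--8.7 of \cite{ShUs322a}), so there is no in-paper argument to compare yours against. Judged on its own, your treatment of (i), (ii) and (iv) is essentially right and is the standard route: transport the defining schemes $\Psi_\psi(\bar y,\bar a_\psi)$ (which exist by Theorem \ref{4.1} because $A$ is stable and $p\in S_*(A)$) to parameters from $B$, and verify finite consistency by pulling a putative inconsistent configuration back into $A$ via $A\subseteq_t B$. Two small things to make explicit: $\subseteq_t$ is applied to the \emph{concatenation} of the finitely many tuples involved (harmless, since that is still a finite tuple of $B$), and completeness of $q$ is not ``immediate'' --- one must check that the schemes for $\psi$ and for $\neg\psi$ remain complementary on tuples from $B$, which is the same pullback argument once more.

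The genuine gap is in (iii). The substantive content of $q\in S_*(B)$ is the completeness of $B\cup\bar c$ for $\bar c\models q$; your justification (that the defining parameter can be taken in $A\cap P$ when $P(x)\in q$) only addresses the easy half, namely $P\cap(B\cup\bar c)=P\cap B$. What actually has to be shown is that for an arbitrary mixed tuple $\bar b_0{}^\frown\bar c$ with $\bar b_0\subseteq B$, witnesses to formulas of the form $(\exists\bar x\subseteq P)\,\psi(\bar x,\bar b_0,\bar c)$ can be found inside $P\cap B$. Completeness of $B$ does not apply directly, because the parameter $\bar c$ lies outside $B$; completeness of $A\cup\bar c$ (which you get from $p\in S_*(A)$) only handles the case $\bar b_0\subseteq A$. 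A correct argument needs three steps: use completeness of $A$ (recall every stable set is complete by Definition \ref{dfn:startypes}) to show that, on tuples from $A$, the defining scheme for the ``$(\exists\bar e\subseteq P)$-of-a-definition'' part of $p$ is equivalent to $(\exists\bar e\subseteq P)$ applied to the scheme for the corresponding part of $p$; transfer that equivalence to all tuples of $B$ via $A\subseteq_t B$; and only then invoke completeness of $B$ to produce a witness $\bar e\subseteq P\cap B$. So (iii) is not ``comparatively routine bookkeeping'' --- it is the one clause that requires a genuine interleaving of completeness of $A$, the $\subseteq_t$ transfer, and completeness of $B$, and your sketch as written does not supply it.
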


\begin{co} 
	Let $q \in S_*(B)$  definable over $A \subseteq_t B$, $A$ a stable set. Then $q$ is the stationarization of $q\rest A$. 
\end{co}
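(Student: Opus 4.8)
The key observation is that almost all the content is in showing $p := q\restriction A \in S_*(A)$; granting this, the conclusion is immediate. Indeed, since $q$ is definable over $A$, for every $\psi(\bar x,\bar y)$ the $\psi$-part $q_\psi$ is defined by some $\Psi_\psi(\bar y,\bar a_\psi)$ with $\bar a_\psi\subseteq A$. Restricting parameters to $A$, the same formula $\Psi_\psi(\bar y,\bar a_\psi)$ defines $p_\psi=(q\restriction A)_\psi$. So, once we know $p\in S_*(A)$ (and recalling $A$ is stable and $A\subseteq_t B$), the pair $(p,q)$ satisfies the definition of "$q$ is a stationarization of $p$ over $B$" essentially verbatim; uniqueness (Lemma \ref{station}(ii)) is not even needed.

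So the task reduces to: fix $\bar c\models q$ and show $P\cap(A\cup\bar c)=P\cap A$ and $A\cup\bar c$ is complete. For the first: since $q\in S_*(B)$ we have $P\cap\bar c\subseteq P\cap B\subseteq B$. Let $c\in P\cap\bar c$; then "$x=c$" lies in $q$ (with parameter $c\in B$), so by definability of $q$ over $A$ the set $\{b\in B: (x{=}b)\in q\}=\{c\}$ is defined, within $B$, by some $E(y,\bar a)$ with $\bar a\subseteq A$. In particular $\models\exists y\,E(y,\bar a)$, a statement over $A$ witnessed by $c\in B$, so $A\subseteq_t B$ yields $c'\in A$ with $\models E(c',\bar a)$; since $c'\in A\subseteq B$ this forces $c'=c$, hence $c\in A$. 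Therefore $P\cap(A\cup\bar c)=(P\cap A)\cup(P\cap\bar c)=P\cap A$, and in particular $(A\cup\bar c)\cap P=A\cap P\prec P^\cC$ because $A$ is complete.

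For completeness of $A\cup\bar c$, I would invoke the characterization Fact \ref{obs:complete_characterization}: having just recorded that $(A\cup\bar c)\cap P\prec P^\cC$, it remains to show that for every finite $\bar a\subseteq A$, $\bar c'\subseteq\bar c$ and every $\psi(\bar x,\bar y)$, the type $tp_\psi(\bar a\bar c'/P^\cC)$ is definable over $A\cap P$. Since $q\in S_*(B)$, the set $B\cup\bar c$ is complete, so by Observation \ref{obs:defcomplete} this $\psi$-type is defined by $\Psi_\psi(\bar y,\bar e)$ for some $\bar e\subseteq B\cap P$. The assertion "the tuple $\bar z$ names such a definition and lies in $P$" is a single formula $\theta(\bar x,\bar z,\bar v)$ (with $\bar v$ the slot for $\bar a$), and $\models\theta(\bar c',\bar e,\bar a)$. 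Viewing $\theta$ as a formula in the variables $\bar x$ (of the length of $\bar c'$) with parameter slots $\bar z,\bar v$, definability of $q$ over $A$ gives a defining formula $\Theta'(\bar z,\bar v,\bar a_\theta)$ with $\bar a_\theta\subseteq A$; substituting $\bar v=\bar a$ produces $\Theta(\bar z):=\Theta'(\bar z,\bar a,\bar a_\theta)$ over $A$ with $\models\Theta(\bar e)$. Then $\models\exists\bar z\,\Theta(\bar z)$ is a statement over $A$ witnessed by $\bar e\subseteq B$, so $A\subseteq_t B$ supplies $\bar e'\subseteq A$ with $\models\Theta(\bar e')$; as $\bar e'\subseteq B$ this means $\theta(\bar x,\bar e',\bar a)\in q$, hence $\models\theta(\bar c',\bar e',\bar a)$, which forces $\bar e'\subseteq P$ (first conjunct of $\theta$), so $\bar e'\subseteq A\cap P$, and tells us $\Psi_\psi(\bar y,\bar e')$ defines $tp_\psi(\bar a\bar c'/P^\cC)$. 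This is exactly what was needed.

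The routine bookkeeping (reducing to finite $\bar a,\bar c'$, handling sub-tuples of $\bar c$, the fact that the defining parameters range over tuples from $B$) I would relegate to a sentence. The only genuinely delicate point — and the one I expect to be the main obstacle — is the completeness step: one must carefully turn "$\bar e$ defines $tp_\psi(\bar a\bar c'/P^\cC)$" into an honest first-order formula about $\bar c'$, recognize that the definition scheme of $q$ (a priori only about instances with parameters in $B$) nonetheless governs this formula once $\bar e,\bar a$ are regarded as parameter slots, and then apply $A\subseteq_t B$ to the resulting existential statement in such a way that the produced witness stays inside $P$. Everything else is a direct unwinding of the definitions.
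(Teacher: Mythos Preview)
Your proof is correct. The paper states this corollary without proof, presumably regarding it as an immediate consequence of Lemma~\ref{station} and the definition of stationarization; you have correctly identified that the only real content is verifying $q\restriction A\in S_*(A)$, and your argument for this---using definability of $q$ over $A$ together with $A\subseteq_t B$ to pull the relevant parameters (first the $P$-coordinates of $\bar c$, then the definition parameters $\bar e$ for $\psi$-types over $P^\cC$) from $B$ down into $A$, via the characterization in Fact~\ref{obs:complete_characterization}---is exactly the natural way to spell this out.
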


\begin{lem}\label{mapextension}(Lemma 8.10 in \cite{ShUs322a})
	Let $A,B,C$ be sets such that $A\subseteq_t B$, $A\subseteq C$, 
	$C\ind_A B$ (see Notation \ref{no:ind}). Let $F$ be an elementary map from $B$ onto $B'$, $G$ be an elementary map from $C$ onto $C'$ such that $F\rest A = G\rest A$. Then $F \cup G$ is elementary.
\end{lem}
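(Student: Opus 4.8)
The plan is to show that $F \cup G$ preserves satisfaction of every formula; by symmetry of ``elementary'' it suffices to show one direction, i.e. that whenever $\cC \models \varphi(\bar b, \bar c)$ with $\bar b \subseteq B$, $\bar c \subseteq C$, then $\cC \models \varphi(F(\bar b), G(\bar c))$. The natural strategy is to pivot through the type of $\bar c$ over $B$: since $C \ind_A B$, the type $\tp(\bar c/B)$ is the stationarization over $B$ of $\tp(\bar c/A) \in S_*(A)$ (using Lemma \ref{station}(iv) to pass from a finite subtuple $\bar c$ of some enumeration of $C$ to $\bar c$ itself, or just observing that finite subtuples of an independent set are independent). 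So $\tp(\bar c/B)$ is definable over $A$: for each $\psi(\bar x, \bar y)$ there is $\Psi_\psi(\bar y, \bar a_\psi)$ with $\bar a_\psi \subseteq A$ such that $\models \psi(\bar c, \bar b')$ iff $\models \Psi_\psi(\bar b', \bar a_\psi)$ for all $\bar b' \subseteq B$.

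First I would fix $\varphi(\bar x, \bar y)$ and assume $\cC \models \varphi(\bar b, \bar c)$. Viewing this as $\psi(\bar c, \bar b)$ for $\psi(\bar y, \bar x) = \varphi(\bar x, \bar y)$, definability of $\tp(\bar c/B)$ over $A$ gives $\cC \models \Psi_\psi(\bar b, \bar a_\psi)$ with $\bar a_\psi \subseteq A$. Now apply the elementary map $F$: since $\bar b, \bar a_\psi \subseteq B$, we get $\cC \models \Psi_\psi(F(\bar b), F(\bar a_\psi)) = \Psi_\psi(F(\bar b), G(\bar a_\psi))$, using $F \rest A = G \rest A$ for the last equality. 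Next I want to conclude $\cC \models \psi(G(\bar c), F(\bar b))$, i.e. $\cC \models \varphi(F(\bar b), G(\bar c))$, and for that I need to know that $\tp(G(\bar c)/B')$ is still defined by $\Psi_\psi(\bar y, G(\bar a_\psi))$ over the image $A' = G(A) = F(A)$. This is where the argument needs care: since $G$ is elementary on $C \supseteq A\bar c$, the type $\tp(\bar c/A)$ is sent to $\tp(G(\bar c)/A')$, so $\tp(\bar c/A) \in S_*(A)$ being defined by $\Psi_\psi(\bar y, \bar a_\psi)$ transfers to $\tp(G(\bar c)/A')$ being defined by $\Psi_\psi(\bar y, G(\bar a_\psi))$. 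Then, provided $A' \subseteq_t B'$ and $A'$ is stable — which hold because $A \subseteq_t B$ and stability are elementary properties preserved by $F$ — the stationarization of $\tp(G(\bar c)/A')$ over $B'$ is unique (Lemma \ref{station}(ii)) and defined by $\Psi_\psi(\bar y, G(\bar a_\psi))$. Evaluating this definition at $F(\bar b) \subseteq B'$ and using the displayed satisfaction $\cC \models \Psi_\psi(F(\bar b), G(\bar a_\psi))$ gives $\psi(G(\bar c), F(\bar b))$ as desired.

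The main obstacle I anticipate is a bookkeeping one rather than a conceptual one: one must be careful that $\bar c$ ranges over \emph{arbitrary finite} tuples from $C$, not just over a fixed enumeration witnessing the local construction / independence — this is handled by Lemma \ref{station}(iv), which says stationarizations are closed under subtuples, together with the definition of $C \ind_A B$ as ``$\bar a \ind_A B$ for every $\bar a \in C$''. A secondary point requiring attention is verifying that the image $C'$ of $C$ is itself independent from $B'$ over $A'$ (so that the roles of $B,C$ and $B',C'$ are genuinely symmetric); but this follows immediately since $G$ carries the relevant types, $\subseteq_t$ relations, and definability schemes to their primed counterparts. Once these transfer-of-structure points are set up, the core of the proof is the single ``pivot through the definition over $A$'' computation sketched above.
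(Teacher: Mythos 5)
A preliminary remark: the paper itself contains no proof of this lemma --- it is imported as Lemma 8.10 of \cite{ShUs322a} --- so there is no in-paper argument to compare yours against, and I can only assess your proposal on its own terms.

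Your central computation is the right mechanism: from $\models\varphi(\bar b,\bar c)$ pass to $\models\Psi_\psi(\bar b,\bar a_\psi)$ with $\bar a_\psi\subseteq A$ via the $A$-definition of the stationarization, push this through $F$ using $F\rest A=G\rest A$, and read the answer back off the defining schema on the primed side. The transfer steps you list ($A'$ stable, $A'\subseteq_t B'$, and $\tp(G\bar c/A')$ being defined by $\Psi_\psi(\bar y,G(\bar a_\psi))$) are all correct. The genuine gap is precisely the point you dismiss as ``secondary'': to conclude $\models\psi(G\bar c,F\bar b)$ from ``the stationarization of $\tp(G\bar c/A')$ over $B'$ contains $\psi(\bar y,F\bar b)$'', you need $G\bar c$ to actually \emph{realize} that stationarization, i.e.\ you need $C'\ind_{A'}B'$. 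This does not ``follow immediately since $G$ carries the relevant types'': $G$ is defined only on $C$, so it carries $\tp(\bar c/A)$ to $\tp(G\bar c/A')$ but says nothing about how $C'$ sits relative to $B'\setminus A'$; the type $\tp(G\bar c/B')$ is some extension of $\tp(G\bar c/A')$, not automatically the definable one.

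Indeed, without that extra hypothesis the statement (as transcribed) fails. Let $T$ be the theory of an infinite set with an infinite, co-infinite predicate $P$ (which satisfies all the standing hypotheses), let $A\prec B$ be models, let $c\notin B$ realize the nonalgebraic non-$P$ type over $A$, and set $C=A\cup\{c\}$, so that $A\subseteq_t B$ and $C\ind_A B$. Take $F=\mathrm{id}_B$ and let $G$ fix $A$ and send $c$ to some $c'\in(B\setminus A)\setminus P$. Both maps are elementary and agree on $A$, yet $F\cup G$ sends the distinct elements $c$ and $c'$ to the same element $c'$, so it is not even injective. The fix is to add the hypothesis $C'\ind_{A'}B'$ (this is certainly how the lemma is meant and used: in the one application in this paper, Corollary \ref{33}(ii), both the domain side and the image side are good systems, so Lemma \ref{32}(ii) supplies the independence on both sides). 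With that hypothesis added, your argument closes: uniqueness of the stationarization, Lemma \ref{station}(ii), identifies $\tp(G\bar c/B')$ with the type defined by $\Psi_\psi(\bar y,G(\bar a_\psi))$, and the pivot computation finishes the proof.
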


\begin{lem}\label{isolatedimplies} (Lemma 8.11 in \cite{ShUs322a})
 Let $A$ be $\lam$-saturated and stable, $A \subseteq_t B$, $N$ a $\lam$-saturated model $\lam$-atomic over $A$ such that 
$N \ind_A B$.
 
 Then $tp(N/A) \vdash tp(N/B)$; so the types $tp(N/A)$ and $tp(B/A)$ are weakly orthogonal.  
\end{lem}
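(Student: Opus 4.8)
The plan is to decode ``$\tp(N/A) \vdash \tp(N/B)$'' and then read it off from the gluing lemma for elementary maps, Lemma \ref{mapextension}. Since $N$ is a model with $P^N \subseteq A$, the partial type $\tp(N/A)$ (in variables indexed by $N$) contains $x_a = a$ for each $a \in A$; hence a realization of it is the image of $N$ under an elementary map $g$ with $g \rest A = \mathrm{id}_A$, and $\tp(N/A) \vdash \tp(N/B)$ is just the assertion that for every such $g$ the map $g \cup \mathrm{id}_B$ is elementary. (One first notes $B \cap N = A$, which is automatic from $N \ind_A B$: if $b\in B\cap N$ then $b\ind_A B$ forces the $(x{=}y)$-definition of $\tp(b/B)$ to be over $A$, whence $b\in\mathrm{dcl}(A)=A$, using $\lam$-saturation of $A$.) So, fixing $g$, I want to apply Lemma \ref{mapextension} with $C := N$, $F := \mathrm{id}_B$, $G := g$: the hypotheses $A \subseteq_t B$ and $A \subseteq N$ are given, $F$ and $G$ are elementary and agree on $A$, so the only thing left to supply is the independence ``$C \ind_A B$'', i.e. $N \ind_A B$ --- which is among our assumptions.

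The catch is that Notation \ref{no:ind} records $N \ind_A B$ \emph{element by element}, whereas elementarity of $g \cup \mathrm{id}_B$ is tested on finite tuples $\bar d \subseteq N$ (together with finite tuples from $B$); so the genuine work is the upgrade: \emph{for every finite $\bar d \subseteq N$, $\tp(\bar d/B)$ is the stationarization of $\tp(\bar d/A)$}, i.e. $\bar d \ind_A B$. This is precisely where the hypotheses ``$N$ is $\lam$-atomic over $A$'' and ``$A$, $N$ are $\lam$-saturated'' earn their keep. First, $\lam$-atomicity gives a small $A_{\bar d} \subseteq A$ over which $\tp(\bar d/A)$ is $\lam$-isolated, say by a sub-type $r$ of size $< \lam$; since $A$ is $\lam$-saturated and stable, $r$ extends to some $q \in S_*(A)$ (by the Small Type Extension Lemma \ref{le:typeextension}, or by Theorem \ref{stable}), and because a $\lam$-isolated sub-type has a unique completion over $A$ we get $q = \tp(\bar d/A)$; hence $\tp(\bar d/A) \in S_*(A)$. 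Then, $A$ being stable with $A \subseteq_t B$, Lemma \ref{station} furnishes a unique stationarization $q^* \in S(B)$ of $\tp(\bar d/A)$, and it remains to identify $q^* = \tp(\bar d/B)$.

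For that identification I would induct on the length of $\bar d$: writing $\bar d = \bar d'd$, induction gives that $\tp(\bar d'/B)$ is the stationarization of $\tp(\bar d'/A)$, while the element-wise hypothesis gives the same for $d$; combining these via the compatibility of stationarizations with restriction (Lemma \ref{station}(iv)) and the Corollary following Lemma \ref{station} (a complete $B$-type definable over a stable $A \subseteq_t B$ is the stationarization of its $A$-restriction), together with the definability of types of tuples (Theorem \ref{4.1}), pins down $\tp(\bar d/B)$ as $q^*$. With $\bar d \ind_A B$ now in hand for all finite $\bar d \subseteq N$, Lemma \ref{mapextension} (applied with $C = A\bar d$, or all at once with $C = N$) yields $\bar d \equiv_B g(\bar d)$; as $\bar d$ was arbitrary, $g \cup \mathrm{id}_B$ is elementary, i.e. $\tp(N/A) \vdash \tp(N/B)$. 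The weak orthogonality of $\tp(N/A)$ and $\tp(B/A)$ then follows formally: carry a realization of $\tp(B/A)$ onto $B$ by an automorphism over $A$, apply $\tp(N/A) \vdash \tp(N/B)$, and pull back.

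I expect the element-to-tuple upgrade (second and third paragraphs) to be the main obstacle: it is the only step that really uses the atomicity/saturation hypotheses together with the definability of types of \emph{tuples}, and one has to track how the definition schemas over the small set $A_{\bar d}$ propagate to a stationarization over $B$. Once that is settled, the rest --- the reduction to a gluing statement and the appeals to Lemma \ref{mapextension} --- is routine.
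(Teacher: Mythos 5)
A preliminary remark: this paper does not prove Lemma \ref{isolatedimplies} at all --- it is imported as Lemma 8.11 of \cite{ShUs322a} --- so there is no in-paper proof to measure yours against; I can only judge the argument on its own terms. Your first paragraph (reduce ``$tp(N/A)\vdash tp(N/B)$'' to gluing an elementary $g$ fixing $A$ with $\mathrm{id}_B$ via Lemma \ref{mapextension}, after checking $B\cap N=A$) is sound. The problem is the ``element-to-tuple upgrade'' in your third paragraph, which is exactly the step you flagged as the main obstacle, and it does not close. Knowing that $tp(\bar d'/B)$ is the stationarization of $tp(\bar d'/A)$ and that $tp(d/B)$ is the stationarization of $tp(d/A)$ does not determine $tp(\bar d'd/B)$, and in particular does not show it is definable over $A$: this is the standard failure of ``pairwise independence implies independence of the pair''. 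Lemma \ref{station}(iv) points the wrong way (the stationarization of the tuple's type \emph{extends} those of the subtuples; the latter do not pin down the former), and the Corollary after Lemma \ref{station} presupposes that $tp(\bar d/B)$ is definable over $A$, which is precisely what you need to prove. So the induction on the length of $\bar d$ is a genuine gap as written.

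Fortunately, the detour is not needed. Throughout this paper ``$\bar a\in C$'' denotes a finite tuple from $C$ (see the definition of $\subseteq_t$), so Notation \ref{no:ind}(2) already quantifies over finite tuples and $N\ind_A B$ \emph{is} the tuple-wise hypothesis; indeed Lemma \ref{mapextension} would be false under the element-wise reading. With that, your first paragraph essentially completes the proof (take $C=N$, $F=\mathrm{id}_B$, $G=g$ in Lemma \ref{mapextension}), and the weak-orthogonality clause follows formally as you say. But note that on this reading your argument never uses the $\lambda$-saturation and $\lambda$-atomicity hypotheses, which is a sign that the intended proof in \cite{ShUs322a} is doing something else: plausibly, $\lambda$-atomicity supplies a subtype $r\subseteq tp(\bar d/A)$ of size $<\lambda$ with $r\equiv tp(\bar d/A)$, and if $r$ were consistent with the negation of an instance, at some $\bar b\in B$, of the defining schema of the stationarization, then $A\subseteq_t B$ together with $\lambda$-saturation of $A$ would let one realize the offending parameter inside $A$, contradicting that the schema defines $tp(\bar d/A)$ over $A$. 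That is the mechanism that actually converts isolation into ``$tp(\bar d/A)\vdash tp(\bar d/B)$'', and it is absent from your write-up.
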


\section{Good Systems}

Finally we are ready to formally define the main tool that is used in our proofs: good systems. 

\begin{no} 

\smallskip

\begin{itemize}

\item ${\cal P}(Y)=\{Z:Z\subseteq Y\}, {\cal P}^-(Y)={\cal
P}(Y)\setminus \{ Y\}$. 
\item We also use the standard set theoretic notation $n = \set{0, 1, \ldots, n-1}$. 
\end{itemize}

\end{no}

\begin{de}\label{system}\label{30}
\begin{itemize}
\item[(1.)] We say that $I$ is {\it weakly nice} if for some $n,
I\subseteq {\cal P}(n), {\cal P}(\{ 1,\dots ,n\})\subseteq I$ and $I$
is hereditary (i.e. $t\subseteq s$ and $s\in I$ implies $t\in I$).
We say $I$ is {\it nice} if in addition it is an initial segment of
${\cal P}(n)$ ordered lexicographically (identifying $s\in {\cal
P}(n)$ with its characteristic function; so $\{ t\in I:0\notin t\}$ is
an initial segment and for every $s\in I$ all subsets of $s$ precede
$s$ with respect to this ordering.) Let $n_I$ be this $n$. If $s$ is
the last element on this list, $0\in s$, then we say $(I,s)$ is {\it nice}.

\item[(2.)] If $I$ is weakly nice, we define by induction on $n_I$
that ${\cal S}=\langle A_s:s\in I\rangle $ is a {\it good system} (or {\it good
$I$-system}) if:
\begin{itemize}
\item[(i)] $I$ is nice,
\item[(ii)] $A_s\cap A_t=A_{s\cap t}$,
\item[(iii)] \begin{itemize}
             \item[(a)] $0\notin s\Rightarrow A_s\prec P^{\cal C}$,
             \item[(b)] $0\in s\Rightarrow A_s\prec {\cal C}$.
             \end{itemize}
\item[(iv)] $A_s\cap P^{\cal C}=A_{s\setminus\{ 0\}}$,
\item[(v)] if $n\geq 2$, then $\langle A_s: s\in {\cal P}(\{1,\dots
,n-2\})\rangle \prec \langle A_{s\cup\{ n-1\}}:s\in {\cal P}(\{ 1,\dots ,n-2\} )\rangle $
and these are good systems,
\item[(vi)] $\bigcup _{t\in {\cal P}^-(s)} A_t$ is stable when $0\in
s\in I$.
\item[(vii)] For any $\phi\in L$ and $\bar b_s\in A_s$ for $s\in I$
such that $\models \phi (\dots ,\bar b_s,\dots )$ there are $\bar
b^\prime _s, s\in I$, satisfying: $\bar b^\prime _s\in A_{s\setminus\{
n-1\}}$ and $\models \phi(\dots ,\bar b^\prime _s,\dots )$,
and, in addition,  if
$n-1\notin s$, then we can choose $\bar b^\prime_s=\bar b_s$.)
\item[(viii)] $\langle A_{s\cup\{ n-1\}}: s\in I_{[n-1]}\rangle $ 
and $\langle A_s:s\in
I^{[n-1]}\rangle $ are both good systems where
     
\begin{itemize}\item[(a)] $I^{[n-1]}=\{ s:n-1\notin s\in I\}$
               \item[(b)] $I_{[n-1]}=\{ s:s\cup\{ n-1\}\in I, (n-1)\notin s\}$.
\end{itemize}
\end{itemize}
\item[3.] We call ${\cal S}$ a {\it weakly good} (w.g.) system if
$(ii)-(iv)$ hold and a {\it medium good} (m.g.) system if $(v),(vi)$ hold
too. We call ${\cal S}$ a {\it presystem} if it has the right form.
A $(\lambda, I)$-system or $\lambda$-system means $|A_s|=\lambda$ for
all $s\in I$, and the system is called {\it stable} if $\bigcup _{s\in
I}A_s$ is a stable set.
\end{itemize}
\end{de}

\begin{lem}\label{31}
\begin{itemize}
\item[(i)] If $\langle A_s:s\in I\rangle \equiv \langle A^\prime _s:s\in I\rangle $ where these
presystems are treated as structures, then one is good if and only if
the other one is good; similarly for w.g., m.g.
\item[(ii)] If $\langle A_s:s\in I\rangle $ is good where $(I,t)$ is nice, then
$\langle A_s:s\in I\setminus\{ t\}\rangle $ is also good.
\item[(iii)] If $0=l_0<l_1<\dots <l_{m-1}<n_I$ and $J=\{ v\subseteq
m:\{ l_i:i\in v\}\in I\} , B_v=A_{\{l_i:i\in v\}}$ and $\langle A_s:s\in I\rangle $
is a good system, then so is $\langle B_v:v\in J\rangle $.
\end{itemize}
\end{lem}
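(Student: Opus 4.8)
The plan is to prove (i), (ii), (iii) by induction on $n_I$, in each case by a clause-by-clause inspection of Definition \ref{system}; the induction feeds on the recursive shape of that definition, since clauses (v) and (viii) exhibit a good $I$-system in terms of good systems with strictly smaller value of $n_I$.

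For (i), the point is that each clause defining ``good $I$-system'' --- and likewise ``w.g.'' and ``m.g.'', which use subcollections of the same clauses --- is either a first-order property of the presystem regarded as a structure, or depends only on its elementary-equivalence type. Indeed: ``$I$ is nice'' is a condition on $I$ alone; clauses (ii), (iii), (iv), (vii), as well as the ``$\prec$'' assertion inside (v), are (schemes of) first-order sentences in $\langle A_s : s\in I\rangle$ equipped with the induced $L(T)$-structure and the sort-membership predicates --- here we use that $I$ is finite (so the unions and the conjunctions over $s\in I$ appearing in the clauses are finite), that $L(T)$ has no function symbols, and that $T,T^P$ have quantifier elimination, so that ``$A_s\prec\cC$'' means ``$A_s\models T$'', ``$A_s\prec P^\cC$'' means ``$A_s\models T^P$'', ``$A_s\prec A_{s\cup\{n-1\}}$'' follows from the inclusion $A_s\subseteq A_{s\cup\{n-1\}}$ (a consequence of clause (ii)), and ``$\models\phi(\dots,\bar b_s,\dots)$'' may be evaluated inside the presystem (a substructure of $\cC$); clause (vi) asserts stability --- and the attendant completeness --- of the finite union $\bigcup_{t\in\cP^-(s)}A_t$, which depends only on the theory of that union and hence, $I$ being finite, only on the elementary-equivalence type of the presystem (for the stability part this is precisely Fact \ref{fct:satstable}); and clauses (v), (viii) refer to sub-presystems obtained by discarding some sorts, which remain pairwise elementarily equivalent whenever the full presystems are and have strictly smaller $n_I$, so the induction hypothesis applies.

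For (ii), write $n = n_I$ and recall that $(I,t)$ nice means $t$ is the lex-last element of $I$ and $0\in t$. The crucial point is that $t$ is then $\subseteq$-maximal in $I$ --- if $t\subsetneq s\in I$, the characteristic function of $s$ is lex-above that of $t$ --- so $I\setminus\{t\}$ is again hereditary and an initial segment and still contains every subset of $n$ avoiding $0$, hence is nice. Since $t$ is $\subseteq$-maximal, $t\notin\cP^-(s)$ for every $s\in I$, so the unions in clause (vi) are unchanged; clauses (ii)--(iv) and (v) involve only the surviving $A_s$ ($0\in t$ forces $t\notin\cP(\{1,\dots,n-2\})$ and that $t$ is not of the form $s\cup\{n-1\}$ with $0\notin s$); clause (vii) for $I\setminus\{t\}$ follows from clause (vii) for $I$ on taking $\bar b_t$ to be the empty tuple. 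For clause (viii): if $n-1\notin t$ then $I_{[n-1]}$ is untouched (good by clause (viii) for $I$) and $(I^{[n-1]},t)$ is nice, so the induction hypothesis for (ii), applied to the good system $\langle A_s:s\in I^{[n-1]}\rangle$ (smaller $n_I$), removes $t$; if $n-1\in t$ the roles of the two sub-presystems are interchanged and one removes $t\setminus\{n-1\}$ from $\langle A_{s\cup\{n-1\}}:s\in I_{[n-1]}\rangle$.

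For (iii), factor the monotone injection $i\mapsto l_i$ into successive maps each omitting a single coordinate, so it suffices to treat $m = n-1$ and $\{l_i : i<m\} = n\setminus\{j\}$ for one $j\in\{1,\dots,n-1\}$. If $j = n-1$ then $l$ is the identity on $\{0,\dots,n-2\}$, $J = I^{[n-1]}$, $B_v = A_v$, and $\langle B_v:v\in J\rangle = \langle A_s:s\in I^{[n-1]}\rangle$ is good by clause (viii) of Definition \ref{system}. If $j < n-1$ then $l(n-2) = n-1$, so the distinguished ``top'' coordinate $n-2$ of $J$ maps to the distinguished ``top'' coordinate $n-1$ of $I$; using this alignment one checks that $J$ is nice ($v\mapsto l[v]$ is injective, lex-monotone and preserves hereditariness, so $J$ is the preimage of an initial segment), verifies clauses (ii)--(iv) directly from injectivity and $l(0) = 0$ (e.g.\ $B_v\cap P^\cC = A_{l[v]}\cap P^\cC = A_{l[v]\setminus\{0\}} = A_{l[v\setminus\{0\}]} = B_{v\setminus\{0\}}$), notes that $\{l[w]:w\in\cP^-(v)\} = \cP^-(l[v])$ so that clause (vi) for $\langle B_v\rangle$ is a special case of clause (vi) for $\langle A_s\rangle$, derives clause (vii) for $\langle B_v\rangle$ from clause (vii) for $\langle A_s\rangle$ (pad with empty tuples on the sorts outside the range of $l$, and use $l[v]\setminus\{n-1\} = l[v\setminus\{n-2\}]$), and obtains clauses (v), (viii) for $\langle B_v\rangle$ by applying the induction hypothesis for (iii) to the good systems supplied by clauses (v), (viii) for $\langle A_s:s\in I\rangle$ (which have $n_I$-value $n-1 < n$), with $l$ suitably restricted. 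I expect the main obstacle to be precisely this last bookkeeping in (iii): through each passage to a sub-presystem --- indexed by $I^{[n-1]}$, $I_{[n-1]}$, $\cP(\{1,\dots,n-2\})$ and the corresponding families of $B$'s --- one must keep track of the identification of the ``top'' coordinate on both sides and verify niceness of all the induced index sets, so that the recursive hypotheses and the clauses of Definition \ref{system} line up correctly.
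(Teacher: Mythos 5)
Your proof is correct and follows essentially the same route as the paper's own (which is far terser, dismissing (ii) as ``straightforward checking'' and (iii) clauses (i)--(v) as ``straightforward''): induction on $n_I$ exploiting the recursive clauses (v) and (viii) of Definition \ref{system}, with clause (vi) handled by the observation that stability (and completeness) depend only on the elementary-equivalence type of the set. The only organizational difference is that in (iii) you first factor $i\mapsto l_i$ into single-coordinate omissions, whereas the paper runs the induction on $n$ over all $m$ and all $l_0<\dots<l_{m-1}$ simultaneously and splits on whether $l_{m-1}=n-1$; the two come to the same thing.
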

\begin{proof}
(i): For clause $(vi)$ of
Definition \ref{system} recall that stability is  a property of the theory of a set. 
For the rest use
induction on $n_I$.

(ii): Straightforward checking.

(iii) Clauses $(i) - (v)$ are straightforward, and we prove clauses
$(vi) - (viii)$ by induction on $n$ (for all possible systems, $m$ and
$l_0<\dots <l_{m-1}$). If $l_{m-1}<n-1$ use the induction hypothesis
applied to $\langle A_s:\red{s}\in I^{[n-1]}\rangle$, 
which is a good system by Definition
\ref{system} $(viii)$.
For $l_{m-1}=n-1$ clauses $(vi) - (viii)$ for $\langle B_v:v\in J\rangle $ follow
from the corresponding clauses for $\langle A_s:S\in I\rangle $.
\end{proof}

\red{
\begin{remark}\label{32.5}
	Note that from clause (vii) of Definition \ref{30} it follows that if $\inseq{A}{s}{I}$ is a good system, then $\bigcup_{s \in I^{[n-1]}}A_s \subseteq_t \bigcup_{s \in I}A_s$. 
\end{remark}
}

\begin{lem}\label{1.4}\label{32}
\begin{itemize}
\item[(i)] Suppose $\langle A_v:v\in I\setminus\{ s\}\rangle $ is good, $(I,s)$ nice, then\\
$\bigcup_{v\in {\cal P}^-(s)} A_v\subseteq _t\bigcup _{v\in I,v\not=
s}A_v$.
\item[(ii)] Further, if $\langle A_v:v\in I\rangle $ is good, $(I,s)$ nice, then for
every $\bar b\in A_s,\\ tp(\bar b/\bigcup _{v\in I,v\not=
s}A_v)$ is definable over $\bigcup_{v\in {\cal P}^-(s)} A_v$. 

\red{Hence, combining with (i) and Lemma \ref{20}, $tp(\bar b/\bigcup _{v\in I,v\not=
s}A_v)$ is the stationarization of  $tp(\bar b/\bigcup_{v\in {\cal P}^-(s)} A_v)$. }
\end{itemize}
\end{lem}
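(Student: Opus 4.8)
The plan is to prove (i) and (ii) together by induction on $n = n_I$, exploiting the recursive structure of good systems encoded in clauses (v) and (viii) of Definition \ref{30}. The base cases ($n \le 1$) are trivial. For the inductive step, I split into two cases according to whether $s$ (the last, maximal element witnessing niceness of $(I,s)$) contains $n-1$ or not.

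First, if $n-1 \notin s$: then $s \in I^{[n-1]} = \{v : n-1 \notin v \in I\}$, and by clause (viii)(a), $\langle A_v : v \in I^{[n-1]}\rangle$ is a good system over an index set with strictly fewer "levels". I would apply the induction hypothesis to it to get both conclusions (i) and (ii) relative to $\bigcup_{v \in I^{[n-1]}, v \ne s} A_v$. Then I must upgrade $\subseteq_t$ and definability past the remaining sets $A_{v}$ with $n-1 \in v$. The key tool here is Remark \ref{32.5}: in the full system, $\bigcup_{v \in I^{[n-1]}} A_v \subseteq_t \bigcup_{v \in I} A_v$, which is exactly clause (vii) repackaged. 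Combining this with transitivity of $\subseteq_t$ (and, for (ii), with the fact that a type definable over a small set stays definable, using Lemma \ref{20}(ii) on uniqueness of stationarizations) should close this case — though one has to be slightly careful that removing $A_s$ from the bigger union does not break the chain, which is where clause (vii)'s clause "if $n-1 \notin s$ we can take $\bar b'_s = \bar b_s$" is used.

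Second, if $n-1 \in s$: this is the case where $s$ is the genuinely top element and $\mathcal{P}^-(s)$ involves $A_{s \setminus \{n-1\}}$. Here I would use clause (viii)(b): $\langle A_{v \cup \{n-1\}} : v \in I_{[n-1]}\rangle$ is a good system, and $s \setminus \{n-1\}$ is its last element (it is nice with respect to this index set — this needs a small niceness bookkeeping check). The induction hypothesis applied to this system, together with clause (v) (the "parallel chain" condition relating the $n-2$-level system with its $\{n-1\}$-shift) and clause (vi) (stability of $\bigcup_{t \in \mathcal{P}^-(s)} A_t$, which makes $\subseteq_t$ and stationarization meaningful in the first place), should give (i); and then (ii) follows because by (i) we have $\bigcup_{v \in \mathcal{P}^-(s)} A_v \subseteq_t \bigcup_{v \ne s} A_v$, so the stationarization of $tp(\bar b / \bigcup_{v \in \mathcal{P}^-(s)} A_v) \in S_*$ exists by Lemma \ref{20}(i), and one checks it agrees with $tp(\bar b / \bigcup_{v \ne s} A_v)$ by verifying the latter is definable over $\bigcup_{v \in \mathcal{P}^-(s)} A_v$ via clause (vii) again. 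The final sentence ("Hence... the stationarization") is then immediate from Lemma \ref{20}.

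The main obstacle I anticipate is purely combinatorial rather than conceptual: tracking exactly which sub-index-sets inherit niceness of the pair $(I', s')$ under the operations $I \mapsto I^{[n-1]}$ and $I \mapsto I_{[n-1]}$, and confirming that the element playing the role of "$s$" in the sub-system is still its maximal/last element so that the induction hypothesis genuinely applies. A secondary delicate point is making sure that in clause (vii), when we reflect the witnesses $\bar b_s$ down to $A_{s \setminus \{n-1\}}$, we do not disturb the $\bar b_v$ for $v$ not containing $n-1$ — but this is exactly guaranteed by the parenthetical in clause (vii), so the "hard part" is really just being disciplined about the indexing rather than proving anything deep.
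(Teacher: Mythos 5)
Your skeleton --- simultaneous induction on $n_I$ with a case split on whether $n-1\in s$ --- is the same as the paper's, and your treatment of the case $n-1\notin s$ (induction hypothesis for $\langle A_v:v\in I^{[n-1]}\rangle$, then Remark \ref{32.5} plus transitivity of $\subseteq_t$ for (i), and reflection of a counterexample to the definition for (ii)) is essentially the argument given there. The gap is in the case $n-1\in s$, which is where the real work happens, and your sketch does not contain the two ideas that make that case go through.

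For (i) with $n-1\in s$: writing $t=s\setminus\{n-1\}$, the set $\bigcup_{v\in{\cal P}^-(s)}A_v$ decomposes as $A_t\cup\bigcup_{u\in{\cal P}^-(t)}A_{u\cup\{n-1\}}$, and $A_t$ simply does not occur in the shifted system $\langle A_{u\cup\{n-1\}}:u\in I_{[n-1]}\rangle$, so ``the induction hypothesis applied to this system'' cannot by itself handle a witness tuple with a coordinate $\bar a\in A_t$. The paper's key move is to first invoke the induction hypothesis for \emph{(ii)} on $(I\setminus\{s\},t)$ to get that $tp(\bar a/\bigcup_{v\ne s,t}A_v)$ is definable over $\bigcup_{v\in{\cal P}^-(t)}A_v$, replace the formula $\phi(\bar a,\bar b,\bar c)$ by its defining formula $\Psi_\phi(\bar b,\bar c,\bar d)$ with $\bar d$ in the lower part, and only then apply part (i) of the induction hypothesis to the shifted system to pull $\bar c$ down. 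This is precisely why (i) and (ii) must be interleaved in the induction; your plan says they are proved ``together'' but never actually uses (ii) inside the proof of (i). For (ii) with $n-1\in s$, your proposed argument is circular: you say one ``checks that the latter is definable over $\bigcup_{v\in{\cal P}^-(s)}A_v$ via clause (vii)'', but that definability is exactly the statement to be proved. What is needed instead is Theorem \ref{4.1} applied to the stable set $\bigcup_{u\in{\cal P}^-(s)}A_u$ (stability coming from clause (vi)) to produce a candidate definition $\psi(\bar y,\bar b)$, followed by a contradiction argument in which a counterexample $\bar c^*$ from the big union is pulled down into $\bigcup_{u\in{\cal P}^-(s)}A_u$ using the already-proved part (i) together with the induction hypothesis for the shifted system, contradicting the choice of $\psi$. (Clause (v), which you invoke in this case, plays no role in the paper's argument here.)
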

\begin{proof} By induction on $n_I=n$ and $|I|$. Let $J=I\setminus \{
s\}$. If $n_I=0$ , the statements are vacuous. Let $n_I=1$,
necessarily $s=\{ 0\}$, ($0\in s$ as $(I,s)$ is nice), so
$I=\{\emptyset ,s\}$; ${\cal P}^-(s)=\{\emptyset\}=\{ v:v\in
I, v\not= s\}$, so $(i)$ holds trivially. As for $(ii)$, using the
same equality, the conclusion holds by Hypothesis \ref{hyp:1} \red{every type over $P^{\cC}$ is definable} as
$A_s\prec {\cal C}, P^{A_s}=A_\emptyset$ by Definition \ref{system}
$(iii),(iv)$.

So let $n_I\geq 2$. We first prove $(i)$. 
$\bigcup _{v\in J^{[n-1]}}A_v\subseteq _t\bigcup _{v\in J} A_v$ by
$(vii)$ of Definition \ref{system} \red{(see Remark \ref{32.5})}. 
By the induction hypothesis
applied to $\langle A_v:v\in J^{[n-1]}\rangle $ (which is a good system by $(viii)$
of the definition) we have $\bigcup _{v\in {\cal P}^-(s)} A_v\subseteq
_t\bigcup _{v\in J^{[n-1]}} A_v$ (as ${\cal P}^-(s)\subseteq J^{[n-1]}$). 
But $\subseteq _t$ is transitive,
so we get the desired conclusion of part $(i)$. 

So assume now $n-1\in s$. 
Let $t:=s\setminus\{ n-1\}$, so $0\in t$ and $t\subseteq t^\prime$
implies $t^\prime\in \{ t,s\}$. As $t$ is the second to last element
in $I$ with respect to lexicographical ordering, $(I\setminus\{
s,t\})$ is nice.

Suppose $\models\phi (\bar a,\bar b,\bar c)$ where $\bar a\in
A_t,\bar b\in \bigcup _{u\in {\cal P}^-(t)}A_{u\cup\{ n-1\}},\bar c\in
\bigcup_{v\in I,v\not= s}A_v$, and we should find $\bar c^*\in \cup
_{u\in {\cal P}^-(s)} A_u$ 
such that $\models \phi (\bar a,\bar b,\bar
c^*)$. Clearly without loss of generality $\bar c\in\bigcup_{v\in
I,v\not= s,t}A_v$. By the induction hypothesis on $(ii)$ applied to
$I\setminus\{ s\}$, $tp(\bar a/\bigcup_{v\in I,v\not= s,t}A_v)$ is
definable over $\bigcup _{v\in {\cal P}^-(t)} A_v$. So we can choose
$\Psi =\Psi_\phi (\bar y,\bar z,\bar d)$ with $\bar d\in \bigcup
_{v\in {\cal P}^-(t)} A_v$ which defines $tp_\phi(\bar a/\bigcup_{v\in
I,v\not= s,t}A_v)$. Hence $\models\Psi(\bar b,\bar c,\bar d)$.

Consider the system $\langle A_{u\cup\{ n-1\}}:u\in I_{[n-1]}
\setminus\{ t\}\rangle $.
(Note: since $I$ is an initial segment of ${\cal P}(n_I)$, $s$ last in
$I$ and $n-1\in s$, we have that $u\cup\{ n-1\}\in I$ whenever $u\in
I$.) This system is good by clause $(viii)$ of Definition \ref{system}
and the previous lemma \red{clause (ii)} and 
$\b\d \in \bigcup _{u \subset t} A_u$). So we can apply part $(i)$
of the induction hypothesis to find $\bar c^*\in \cup _{u\in {\cal
P}^-(t)}A_{u\cup\{ n-1\}}\red{\subseteq_t}$
\blue{$\bigcup_{u\in I_{n-1}, u\neq t} A_u$} so 
that $\models\Psi(\bar b,\bar c^*,\bar d)$, hence by the choice of $\bar
d$, $\models\phi(\bar a,\bar b,\bar c^*)$ as required.

Next we have to prove the induction step for $(ii)$.

%

\red{


First assume that $n-1\notin s$.

$\langle A_v:v\in I^{[n-1]}\rangle $ is a good
system, so by the induction hypothesis $tp(\bar a/\bigcup _{v\in
J^{[n-1]}}A_v)$ is definable over $\bigcup_{v\in {\cal P}^-(s)}A_v$ (note that $\a \in A_s$ and $s \in I^{[n-1]}$ by the assumption).

We claim that these same definitions work for $tp(\bar
a/\bigcup _{v\in J}A_v)$. Indeed, we know that for some $\d\in \bigcup_{v\in {\cal P}^-(s)}A_v$ for all $\b \in \bigcup_{t \in J^{[n-1]}} A_t$ we have $\ph(\a,\b) \iff \Psi_\ph(\b,\d)$. If for some $\b \in A_v \in bigcup _{v\in J}A_v$ the above is not the case, since $\bigcup _{v\in J^{[n-1]}}A_v\subseteq _t\bigcup _{v\in J}A_v$ (see Remark \ref{32.5}), we can find a ``counterexample'' in $\bigcup _{v\in J^{[n-1]}}A_v$,  a contradiction.

}

Now suppose $n-1\in s$. Let $\bar a\in A_s$ and $\phi(\bar x,\bar y)$ be a
formula. By Theorem \ref{4.1}
$tp_\phi(\bar a/\bigcup_{u\in {\cal P}^-(s)}A_u)$ is definable
by some $\psi(\bar y,\bar b)$ where $\bar b\in  \bigcup_{u\in {\cal
P}^-(s)} A_u$
since the domain is a stable set by part $(vi)$ of the definition.
Let $\bar b=\bar b_1\bar b_2$ where $\bar b_1\in A_t,\bar b_2 \in\bigcup
_{u\in {\cal P}^-(t)}A_{u\cup\{ n-1\}}$ where again $t:=s\setminus \{
n-1\}$.
Suppose $\psi(\bar y,\bar b)$ does not define $tp_\phi(\bar a/\bigcup
_{u\in I,u\not= s} A_u)$. Let $\bar c^*$ be a witness to this lack of
definability and $\bar c^*=\bar c^*_1\bar c^*_2,\bar c^*_1\in A_t,\bar
c^*_2\in\bigcup _{u\in I,u\not= s,t}A_{u\cup\{ n-1\}}$.

Hence we have $\models\phi(\bar a,\bar c^*_1,\bar c^*_2)$ if and only if
$\models\neg\psi (\bar c^*_1,\bar c^*_2,\bar b_1,\bar b_2)$.
By part $(viii)$ of the definition $\langle A_{u\cup\{ n-1\}}:u\in I_{[n-1]}\rangle $
is a good system, so by part $(vi)$ applied to the type $tp|{\phi _1}(\bar
a\bar c^*_1/\bigcup_{u\in {\cal P}^-(t)}A_{u\cup\{ n-1\}})$ (where $\phi
_1=\phi (\bar x,\bar y_1,\bar y_2)$) is defined by some $\theta(\bar
y_2,\bar d)$ for some $\bar d\in\bigcup _{u\in {\cal P}^-(t)}A_{u\cup\{
n-1\}}$ and by $(viii)$ of the definition and the induction hypothesis
this definition works for $\bigcup _{u\in I,u\not= s,t}A_u=\bigcup _{u\in
I_{[n-1]}, u\not= t}A_{u\cup\{ n-1\}}$.

As we can replace $\phi$ by $\neg\phi$, without loss of generality we
have $\models\phi (\bar a,\bar c^*_1,\bar c^*_2)\wedge \neg\psi (\bar
c^*_1,\bar c^*_2,\bar b_1,\bar b_2)$. So $\models\theta(\bar c^*_2,\bar
d)$. As we have already proved part $(i)$ for $(I,s)$, we can apply it to
find $\bar c^{**}_2\in\bigcup_{u\in {\cal P}^-(s)}A_u$ satisfying
$\models\theta(\bar c^{**}_2,\bar d)\wedge\neg\psi(\bar c^*_1\bar
c^{**}_2,\bar b_1,\bar b_2)$.
Now by the choice of $\theta(\bar y_2,\bar d)$ this says $\models \phi
(\bar a,\bar c^*_1,\bar c^{**}_2)$.
But $\bar c^*_1,\bar c^*_2\in \bigcup _{v\in {\cal P}^-(s)}A_v$, hence
by the choice of $\psi$ we have $\models\psi(\bar c^*_1,\bar c^*_2,\bar
b)$. But $\bar b=\bar b_1\bar b_2$, so the last statement contradicts
the second conjunct above, and we finish.
\end{proof}

%
%

We can finally conclude that the union of a good system is a complete set:

\begin{co}\label{33}
\begin{itemize}
\item[(i)] If $\langle A_s:s\in I\rangle $ is a good system, then $\bigcup _{s\in
I}A_s$ is complete.
\item[(ii)] If ${\cal S}^l=\langle A^l_t:t\in I\rangle $ are good systems for $l=1,2$,
$(I,s)$ is nice, $F$ is and elementary mapping from $\bigcup _{t\not=
s}A^1_t$ onto $\bigcup _{t\not= s}A_t^2$, $G$ is an elementary mapping
from $A^1_s$ onto $A_s^2$ and $F|\bigcup _{t\subset s}A^1_t=
G| \bigcup_{t\subset s}A^1_t$, then $G\cup F$ is an elementary
mapping.
\end{itemize}
\end{co}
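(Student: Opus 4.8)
The plan is to prove both parts simultaneously, or rather to deduce (i) as an easy consequence of the amalgamation statement (ii), since (ii) is the genuinely useful form and (i) follows by considering a trivial pair of systems. For (i): by Lemma \ref{32}(i) applied with $s$ the last element of $I$ (which has $0 \in s$, since $(I,s)$ is nice), we know $\bigcup_{v \in \cP^-(s)} A_v \subseteq_t \bigcup_{v \neq s} A_v$; together with clause (vii) of Definition \ref{system}, which gives us that tuples in $A_s$ can be ``pushed down'' to $A_{s \setminus \{n-1\}}$, and an induction on $n_I$ (the base cases $n_I \le 1$ being Hypothesis \ref{hyp:1}, exactly as in the proof of Lemma \ref{32}), one reduces completeness of $\bigcup_{s \in I} A_s$ to completeness of the smaller good systems $\langle A_v : v \in I^{[n-1]} \rangle$ and $\langle A_{u \cup \{n-1\}} : u \in I_{[n-1]} \rangle$. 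Concretely: given $\psi(\bar x, \bar y)$ and $\bar b$ from the union with $(\exists \bar x \in P)\psi(\bar x, \bar b)$, one needs $\bar a \subseteq P \cap \bigcup_s A_s$ with $\models \psi(\bar a, \bar b)$; using clause (vii) replace $\bar b$ by a tuple avoiding $A_s \setminus A_{s \setminus \{n-1\}}$-coordinates, apply the induction hypothesis inside the good subsystem indexed by $I^{[n-1]}$, and note $P \cap A_v = A_{v \setminus \{0\}}$ stays inside the union.

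For (ii), the core is an induction on $|I|$ (equivalently, peeling off the last element $s$), with the key algebraic input being Lemma \ref{mapextension} (the map-extension lemma for stationarizations). The setup: $\langle A^l_t : t \in I \rangle$ good for $l = 1,2$, with $(I,s)$ nice. Write $B^l = \bigcup_{t \neq s} A^l_t$ and $C^l = A^l_s$. By Lemma \ref{32}(i), $\bigcup_{t \in \cP^-(s)} A^1_t \subseteq_t B^1$, and by Lemma \ref{32}(ii), for every $\bar b \in A^1_s = C^1$ the type $\tp(\bar b / B^1)$ is the stationarization of $\tp(\bar b / \bigcup_{t \in \cP^-(s)} A^1_t)$ over $B^1$; in particular $C^1 \ind_{A'} B^1$ where $A' := \bigcup_{t \in \cP^-(s)} A^1_t$, which is stable by clause (vi). We also have $A' \subseteq C^1$ (since $\cP^-(s)$ consists of proper subsets of $s$, and $A_t \subseteq A_s$ for $t \subseteq s$ by (ii) of the system definition). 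So the three sets $A' \subseteq_t B^1$, $A' \subseteq C^1$, $C^1 \ind_{A'} B^1$ are exactly the hypotheses of Lemma \ref{mapextension}. The map $F : B^1 \to B^2$ is elementary by hypothesis; the map $G : C^1 \to C^2$ is elementary by hypothesis; and $F \rest A' = G \rest A'$ because $A' \subseteq \bigcup_{t \subset s} A^1_t$ and we are given $F$ and $G$ agree there. Lemma \ref{mapextension} then yields that $F \cup G$ is elementary, which is precisely the conclusion of (ii).

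The main obstacle, as usual in this circle of arguments, is verifying that the hypotheses of Lemma \ref{mapextension} really hold in the form stated — in particular that $A' \subseteq_t B^1$ holds not merely for the good system $\langle A^1_t : t \in I \rangle$ but for the pre-system with $A_s$ removed, and that the independence $C^1 \ind_{A'} B^1$ is the genuine stationarization notion and not just definability of types (one uses the ``Hence'' clause in Lemma \ref{32}(ii) for this, which already combined definability with Lemma \ref{station} to get stationarization). One must also be slightly careful that $B^1$ need not be complete, so one should use the version of the map-extension lemma that does not require completeness of the base extension — which is indeed how Lemma \ref{mapextension} is stated. Finally, for (i) the only subtlety beyond bookkeeping is the inductive descent through the two good subsystems, which is modeled verbatim on the proof of Lemma \ref{32}, so I would simply cite that pattern rather than rewrite it. There is no deeper difficulty: (ii) is a one-line application of \ref{mapextension} once the hypotheses are unpacked, and (i) is the completeness criterion of Definition \ref{dfn:complete} checked inductively.
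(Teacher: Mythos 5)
Your part (ii) is correct and is exactly the paper's proof: the paper disposes of (ii) with the single line ``follows from Lemma \ref{mapextension}'', and your verification of its hypotheses (with $A' = \bigcup_{t\in{\cal P}^-(s)}A^1_t$, stable by clause (vi), $A'\subseteq_t B^1$ by Lemma \ref{32}(i), and $A^1_s\ind_{A'}B^1$ by the ``Hence'' clause of Lemma \ref{32}(ii)) is the intended unpacking.

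Part (i), however, has a genuine gap: the ``push-down'' step goes in the wrong direction. Clause (vii) lets you replace the parameter tuple $\bar b$ by $\bar b'$ with coordinates in $\bigcup_{v\in I^{[n-1]}}A_v$ while preserving $(\exists\bar x\in P)\psi(\bar x,\bar b)$, and the induction hypothesis then produces a witness $\bar a\subseteq P\cap\bigcup_v A_v$ for $\psi(\bar x,\bar b')$ --- but a witness for the modified parameters $\bar b'$ is not a witness for the original $\bar b$, and there is no way back. Completeness is a statement about the \emph{given} parameters, so it does not transfer along $\subseteq_t$ by this kind of substitution. The real difficulty, which your sketch never confronts (and which is why your argument for (i) uses no stability at all --- a red flag, since clause (vi) must enter somewhere), is to control the \emph{joint} type over $P^{\cal C}$ of a tuple mixing elements of the top model $A_s$ with elements of $\bigcup_{t\neq s}A_t$. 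The paper does this by induction on $|I|$, peeling off the last element $s$ (with $0\in s$): it passes to a saturated copy of the system, uses stability of $\bigcup_{t\subset s}A_t$ and Theorem \ref{stable} to build a $\lambda$-primary model $M_s$ over it inside $A_s$, embeds $M_s$ into a model $M$ over the (inductively complete) set $\bigcup_{t\neq s}A_t$ via Lemma \ref{isolatedimplies}, and then transfers the definition of $tp(A_s/M_s)$ to a model $M'$ over $M_s\cup\bigcup_{t\neq s}A_t$ using Lemma \ref{32}. A shorter repair along your lines is available, but it still must route through stationarizations rather than clause (vii): by Lemma \ref{32}(ii) together with Lemma \ref{station}(iii), for each finite $\bar b_s\in A_s$ the type $tp(\bar b_s/\bigcup_{t\neq s}A_t)$ is the stationarization of $tp(\bar b_s/\bigcup_{t\subset s}A_t)$ and hence lies in $S_*(\bigcup_{t\neq s}A_t)$ once that set is known complete by induction; membership in $S_*$ means precisely that $\bigcup_{t\neq s}A_t\cup\bar b_s$ is complete with unchanged $P$-part, and since completeness is a finitary condition this yields completeness of the full union. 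Either way, the step you labelled ``bookkeeping'' is where the actual content of (i) lives.
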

\begin{proof}
By induction on $|I|$; without loss of generality $I\not= {\cal P}(\{
1,\dots , n_I-1\})$ \red{(that is, $I\supset {\cal P}(\{
1,\dots , n_I-1\})$)} since for $I={\cal P}(\{ 1,\dots n_I-1\})$ the claim
follows from condition $(iii)$ (b) of the definition. Let $\lambda
=\lambda ^{<\lambda}>|T|$ and let ${\cal S}^\prime =\langle A^\prime _t:t\in
I\rangle $ be such that:
\begin{itemize}
\item[(a)] $\langle A^\prime _t:t\in I\rangle \equiv \langle A_t:t\in I\rangle $
\item[(b)] ${\cal S}^\prime$ is $\lambda$-saturated
\item[(c)] ${\cal S}^\prime$ has cardinality $\lambda$
\end{itemize}

It suffices to prove the statement for ${\cal S}^\prime$. Let $s\in I$
be the last member of $I$. \red{By our ``wlog'' assumption, $0 \in s$, that is, $(I,s)$ is nice. } By condition $(vi)$ of the definition,
$\bigcup _{t\subset s}A^\prime_t$ is a stable set, so by Theorem
\ref{stable} we can find $M_s, \bigcup _{t\subset s}A^\prime
_t\subseteq M_s\subseteq A^\prime _s, M_s$ 
$\lambda$-primary
over $\bigcup _{t\subset s}A^\prime_t$. For $\bar
a\in M_s, tp(\bar a/\bigcup _{t\subset s}A^\prime_t$ is
$\lambda$-isolated. As $\bigcup_{t\in I, t\not= s}A^\prime_t$ is
complete (by the induction hypothesis) there is a $\lambda$-saturated
model $M,P^M\subseteq _{t\in I,t\not= s}A^\prime_t\subseteq M$. So for
$\bar a\in M_s$ we have $tp(\bar a/\bigcup _{t\subset
s}A^\prime_t)$ implies $tp(\bar a/\bigcup _{t\in I,t\not= s}A^\prime_t)$ \red{(see Lemma \ref{isolatedimplies})}, 
and by the choice of $M_s$ this type is realized in $M$. \gray{Similarly} \red{So}  
without loss of generality, $M_s\subseteq M$. Hence $\bigcup _{t\in
I,t\not= s}A_t^\prime\cup M_s$ is complete.

Now $A^\prime_{s\setminus\{ 0\}} =P^{A^\prime_s}\subseteq M_s\prec
A^\prime _s$, hence for every $\bar a\in A^\prime _s, tp(\bar a/M_s)$ is
stationary and definable. By the first part of the previous lemma 
(as for every $\bar a\in A^\prime _s, tp(\bar a/M_s) \in S_*(M_s)$ since
 $P^{A^\prime_s}\subseteq M_s$)
the same definition works for $tp(\bar a/M_s\cup\bigcup_{t\in I, t\not=
s}A^\prime_t)$. As $M_s\cup\bigcup_{t\in I, t\not=
s}A^\prime_t$ is complete, there is a $\lambda$-saturated $M^\prime,
A_{n_I\setminus\{ 0\}}=P^{M^\prime}\subseteq M_s\cup \bigcup_{t\in I, t\not=
s}A^\prime_t\subseteq M^\prime$. 
We can extend $tp(A^\prime_s/M_s)$ to a complete type over $M^\prime$
using the same definitions. So it necessarily extends
$tp(A^\prime_s/M_s\bigcup_{t\in I, t\not= s}A^\prime_t)$.
So without loss of generality $tp(A^\prime _s/M^\prime)$
is definable over $M_s$.

(ii):  \red{Follows from Lemma \ref{mapextension}}.
\end{proof}

\begin{de}
 We say that $T$ is \emph{$n$-stable} over $P$ if for every good system $\langle A_s:s\in {\cal P}^-(n)\rangle$, its union $B = \bigcup_{s \in {\cal P}^-(n)} A_s$ is stable over $P$. 
\end{de}

\begin{de}
 We say that $T$ has the $n$-existence property if for every good system $\langle A_s:s\in {\cal P}^-(n)\rangle$, there is $A_n$ such that  $\langle A_s:s\in {\cal P}(n)\rangle$ is a good system.
\end{de}

\section{Existence property}

\underline{Hypothesis:} Every good system $\langle A_s:s\in {\cal P}^-(n)\rangle$
is stable. That is, $T$ is $n$-stable over $P$ for all $n<\om$. 

\noindent \underline{Conclusion:} For every nice $I$ and good system
$\langle A_s:s\in I\rangle$, $\bigcup _{s\in I}A_s$ is stable.

The conclusion follows from the Hypothesis by clause (vi) of the definition of a good system. So in particular, Hypothesis \ref{asm:2} (every model is stable over $P$) follows from the Hypothesis of this section.

\begin{theorem}\label{small}\label{45}
If $\langle A_s:s\in I\rangle$ is a good system, $\lambda=\sum_{s\in I} |A_s|,
\lambda > |T|$, then we can find $A^s_\alpha$ for $s\in I,\alpha
<\lambda$ such that
\begin{itemize}
\item[(a)] $|A_\alpha^s| \le |\alpha |+|T|$. 
\item[(b)] $\langle A_\alpha ^s:\alpha <\lambda\rangle$ is increasing continuously.
\item[(c)] For each $\alpha$, $\langle A_\alpha ^s:s\in I\rangle\prec
\langle A_s:s\in
I\rangle$.
\item[(d)] $\langle A_\alpha ^s:s\in I\rangle$ is a good 
system.
\item[(e)] Let $J=I\cup\{ t\cup\{ n_I\}:t\in I\}$, and for $s\in J,
\alpha <\beta$ let
\[ {A^s}_{\{\alpha ,\beta\}}=\left\{ \begin{array}{ll}
                                   A^s_\alpha &\mbox{if $s\in I$}\\
                                   A^t_\beta  &\mbox {if \red{$s\in J$}, $n_\red{I}\in s,
                                                 t=s\setminus\{ n_I\}$}
                                     \end{array}
                             \right. \]

Then $\langle A^s_{\{\alpha ,\beta\}}:s\in J\rangle$ is a good system.

\end{itemize}
\end{theorem}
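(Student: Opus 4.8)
The plan is to build the entire filtration at once by a single Löwenheim--Skolem argument, treating the good system $\langle A_s:s\in I\rangle$ as one structure, and then to read off (e) from the relationship between any two levels of the resulting elementary chain. Concretely, let $\mathfrak A$ be the structure with universe $\bigcup_{s\in I}A_s$, carrying: the $L(T)$-structure induced from $\cC$ (which, since $T$ has been Morley-ized, is determined by the quantifier-free formulas); the predicate $P$; a unary predicate $Q_s$ interpreted by $A_s$, for each $s\in I$; and, for each $s\in I$ and each $L(T)$-formula $\exists\bar y\,\varphi(\bar y,\bar x)$, a Skolem function relativized to $Q_s$ which chooses, when $\bar x\in A_s$ and $\varphi$ has a solution in $A_s$, such a solution inside $A_s$ --- legitimate because clause (iii) of Definition \ref{system} makes $A_s$ an elementary submodel of $\cC$ if $0\in s$ and of $P^\cC$ if $0\notin s$. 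The language of $\mathfrak A$ has size $|T|$. Now take an increasing continuous chain $\langle\mathfrak A_\alpha:\alpha<\lambda\rangle$ of elementary substructures of $\mathfrak A$ with $|\mathfrak A_\alpha|=|\alpha|+|T|$ and union $\mathfrak A$ (possible since $|\bigcup_s A_s|=\lambda>|T|\ge|I|$, by a routine bookkeeping), and put $A^s_\alpha:=Q_s^{\mathfrak A_\alpha}=A_s\cap|\mathfrak A_\alpha|$. Then (a), (b), (c) are immediate, and $\bigcup_\alpha A^s_\alpha=A_s$.

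For (d): because $A^s_\alpha=A_s\cap|\mathfrak A_\alpha|$ and $Q_s,P$ are predicates, the ``intersection'' and ``projection'' requirements are automatic, namely $A^s_\alpha\cap A^t_\alpha=(A_s\cap A_t)\cap|\mathfrak A_\alpha|=A_{s\cap t}\cap|\mathfrak A_\alpha|=A^{s\cap t}_\alpha$ by clause (ii) for $\langle A_s\rangle$, and $A^s_\alpha\cap P^\cC=A^{s\setminus\{0\}}_\alpha$ by clause (iv). Closure of $\mathfrak A_\alpha$ under the relativized Skolem functions gives, by Tarski--Vaught (recall $T$ has no function symbols, so $A^s_\alpha$ is automatically a substructure of $\cC$), that $A^s_\alpha\prec\cC$ when $0\in s$ and $A^s_\alpha\prec P^\cC$ when $0\notin s$. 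Thus $\langle A^s_\alpha:s\in I\rangle$ is a presystem of the right form, and, being an elementary substructure --- in particular elementarily equivalent as a structure --- of the good system $\langle A_s:s\in I\rangle$, it is itself good by Lemma \ref{31}(i); here it is convenient that stability in clause (vi) is a property of the theory of a set, so it is literally preserved, while the remaining clauses are transported along elementary equivalence by the same lemma.

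Now (e). Write $n=n_I$; for $s\in J=I\cup\{t\cup\{n\}:t\in I\}$ we have $A^s_{\{\alpha,\beta\}}=A^s_\alpha$ if $n\notin s$ and $A^s_{\{\alpha,\beta\}}=A^t_\beta$ if $s=t\cup\{n\}$. Niceness of $J$ is combinatorial. Clause (ii) of Definition \ref{system} for the $J$-system comes out as above, now using $|\mathfrak A_\alpha|\subseteq|\mathfrak A_\beta|$ in the mixed case: $A^s_\alpha\cap A^t_\beta=A_{s\cap t}\cap|\mathfrak A_\alpha|=A^{s\cap t}_\alpha$. Clauses (iii) and (iv) are read off from (d) at levels $\alpha$ and $\beta$. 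The sub-system clauses (v) and (viii) unwind to the assertions that $\langle A^s_\alpha:s\in I\rangle$, $\langle A^s_\beta:s\in I\rangle$ and their restrictions to subsets of $\{1,\dots,n-1\}$ are good (which is (d) together with Lemma \ref{31}(iii)) and that the relevant level-$\alpha$ restriction is an elementary substructure of the level-$\beta$ one, which holds because $\mathfrak A_\alpha\prec\mathfrak A_\beta$. Clause (vii) for the top coordinate $n$ reduces exactly to $\bigcup_{s\in I}A^s_\alpha\subseteq_t\bigcup_{s\in I}A^s_\beta$; since $\mathfrak A_\alpha\prec\mathfrak A_\beta$, their $L(T)$-reducts form an elementary extension of substructures of $\cC$, and quantifier-freeness of $L(T)$-formulas (Morley-ization) turns this into a $\subseteq_t$-extension --- an $L(T)$-formula holding in $\cC$ with its ``new'' parameters in $\bigcup_s A^s_\beta$ and its ``kept'' ones in $\bigcup_s A^s_\alpha$ can be re-witnessed with new parameters from $\bigcup_s A^s_\alpha$.

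The one clause of (e) that requires real work, and which I expect to be the main obstacle, is clause (vi) of the $J$-system at a ``top-type'' element $s=u\cup\{n\}$ with $0\in u$: one must show $\bigcup_{t\in\cP^-(s)}A^t_{\{\alpha,\beta\}}=A^u_\alpha\cup\bigcup_{u'\subsetneq u}A^{u'}_\beta$ is stable. I would handle this by a simultaneous induction on $n_I$ and $|I|$. Since the single construction $\langle\mathfrak A_\alpha\rangle$ simultaneously filters every sub-system $\langle A_v:v\subseteq u\rangle$ (which is good by Lemma \ref{31}(iii)) by a filtration satisfying (a)--(d), conclusion (e) for these sub-systems --- available one coordinate down --- makes $\langle A^t_{\{\alpha,\beta\}}:t\in\cP(u\cup\{n\})\rangle$ a good system; reindexing it by the coordinates in $u\cup\{n\}$ via Lemma \ref{31}(iii) presents $\bigcup_{t\in\cP^-(u\cup\{n\})}A^t_{\{\alpha,\beta\}}$ as the union of the ``$\cP^-$-part'' of a good $\cP(m)$-system, hence stable by the standing Hypothesis of this section that every good system is $n$-stable. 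The genuine difficulty is precisely in organizing this induction so that the nested-system clauses (v), (viii) and clause (vi) of the doubled system close up consistently --- which is what the (superficially forbidding) list of conditions in the definition of a good system is designed to make possible; everything else is routine transport of structure along elementary substructures and elementary extensions.
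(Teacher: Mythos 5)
Your construction of the filtration and your verification of (a)--(d) match the paper's proof: L\"owenheim--Skolem applied to the system viewed as a single structure (with predicates $Q_s$ for the $A_s$), followed by Lemma \ref{31}(i) to get goodness of each level. For (e), your overall strategy --- every clause except stability is transported along $\mathfrak A_\alpha\prec\mathfrak A_\beta$, and clause (vi) is supplied by the standing Hypothesis of the section once the relevant $\cP^-$-subsystem is known to be good --- is also the paper's.

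The gap is in how you organize the induction for clause (vi), and it bites exactly at the point you yourself flag as the difficulty. You propose to obtain goodness of $\langle A^t_{\{\alpha,\beta\}}:t\in\cP^-(u\cup\{n\})\rangle$ from ``conclusion (e) one coordinate down'' applied to the subsystem $\langle A_v:v\subseteq u\rangle$. When $u$ is a proper subset of $\{0,\dots,n-1\}$ this is fine, but for the top element of $J$ --- say $I=\cP(n)$ and $u=\{0,\dots,n-1\}$, so $s=u\cup\{n\}$ is the last element of $J=\cP(n+1)$ --- the subsystem $\langle A_v:v\subseteq u\rangle$ \emph{is} the whole $I$-system, so invoking (e) for it is circular; nor is $\cP^-(n+1)=I\cup\{v\cup\{n\}:v\in\cP^-(n)\}$ of the form $I'\cup\{t\cup\{n\}:t\in I'\}$ for any single $I'$, so (e) for a strictly smaller system does not directly produce it. The paper avoids this by inducting not on subsystems but on lexicographic initial segments of $J$: enumerate $J$ as $\langle s^l:l<2^{|I|}\rangle$ in lex order and show by induction on $m$ that $\langle A^{s^l}_{\{\alpha,\beta\}}:l<m\rangle$ is good; since $J$ is nice, all of $\cP^-(s^m)$ precedes $s^m$, so at each step $\langle A^t_{\{\alpha,\beta\}}:t\in\cP^-(s^m)\rangle$ is already known to be good (after reindexing via Lemma \ref{31}(iii)), and the section's Hypothesis gives stability of its union --- including at the top element. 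Replacing your induction on $(n_I,|I|)$ by this one closes the gap, and the rest of your argument goes through essentially as written. One smaller imprecision: clause (vii) for the new coordinate is the coordinatewise statement that a parameter from $A^t_\beta$ can be replaced by one from the \emph{same} $A^t_\alpha$ (keeping the level-$\alpha$ parameters fixed), which is what $\mathfrak A_\alpha\prec\mathfrak A_\beta$ in the language with the $Q_t$ gives you; it is a bit more than $\subseteq_t$ of the unions, which is how you phrase it.
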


\begin{proof}
We can easily define (\blue{L\"{o}wenhem-Skolem})  the $A^s_\alpha$ such that $(a),(b)$ and $(c)$
hold. Now $(d)$ holds by \ref{31} $(i)$. Now, clause $(e)$ follows:

\underline{Claim}
If $\langle A^s_1:s\in I\rangle$ is a good system, $\langle A^s_0:s\in I\rangle\prec
\langle A_1^s:s\in I\rangle$, and $J$ and $A^s_{\{ 0,1\}}$ are defined
as in $(e)$, 
then $\langle A_{\{ 0,1\}}^s:s\in J\rangle$ is a good system.


\emph{Proof} (of claim): 
Let  $\langle s^l:l<2^{|I|}\rangle$ be the lexicographic enumeration of $J$. We
prove by induction on $m$, $|I\cup {\cal P}(\{ 1,\dots ,n_I\})|\leq
m\leq 2^{|I|}$ that $\langle A^{s^l}_{\{ 0,1\}}:l<m\rangle$ is a good system.

For $m=|I\cup {\cal P}(\{ 1,\dots ,n_I\})|$ easy.
For $m+1$ check Definition \ref{system}.

$(i)-(v)$ are easy. $(vi)$ holds as the induction hypothesis implies
that $\langle A_{\{ 0,1\}}^t:t\subset s<m\rangle$ is a good system, hence (\blue{by the hypothesis of the section}) $\bigcup
_{t\subset s_m}A^t_{\{ 0,1\}}$ is stable. Next, $(vii)$ holds as
$\langle A^s_0:s\in I\rangle\prec \langle A^s_1:s\in I\rangle$. Lastly $(viii)$ holds as
$\langle A^s_1:s\in I\rangle$ is a good system (by assumption) and $\langle A^s_0:s\in I\rangle$
is a good system (by \ref{31} $(i)$).
\end{proof}

\begin{theorem}\label{2.5} ($T$ countable) If $I={\cal P}^-(n), \langle A_s:s\in I\rangle$
is a good
system then over $\bigcup _{s\in I}A_s$ there is an 
locally constructible model. Moreover, if $tp(\bar c/\bigcup _sA_s)\in
S_*(\bigcup _sA_s)$, then we can find a 
locally constructible model over $\bigcup _sA_s\cup \bar c$.
\end{theorem}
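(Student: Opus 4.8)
The plan is to prove the statement by induction on the cardinality $\lambda=\sum_{s\in I}|A_s|$ of the system, using Theorem \ref{small} to pass to smaller cardinalities. I would first record two reductions. By the standing Hypothesis of this section together with Corollary \ref{33}(i), the union $B:=\bigcup_{s\in I}A_s$ is a complete \emph{and stable} set; and in the ``moreover'' case, when $\tp(\bar c/B)\in S_*(B)$, the set $B\cup\bar c$ is complete by the definition of $S_*(B)$ and stable by Lemma \ref{lem:fin-stable}. So in every case the task is to build a locally constructible model over a complete stable set. It will be convenient --- and, as the induction reveals, necessary --- to prove simultaneously, for all $n<\om$, the formally stronger statement that the model $M$ produced can be taken to sit at the top of a good $\cP(n)$-system extending the given $\cP^-(n)$-system, i.e.\ that setting $A_n:=M$ yields a good $\cP(n)$-system (in the ``moreover'' case this addendum is dropped).

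The base case $\lambda=\aleph_0$ is precisely Lemma \ref{lem:count-li} applied to $B$ (resp.\ to $B\cup\bar c$), the addendum being arranged by respecting the system structure along the construction of that lemma. For the inductive step, with $\lambda>\aleph_0$, I would apply Theorem \ref{small} to write $\langle A_s:s\in I\rangle$ as an increasing continuous union of good $\cP^-(n)$-subsystems $\langle A^s_\alpha:s\in I\rangle$, $\alpha<\lambda$, each of cardinality $<\lambda$, arranged so that the ``stacked'' systems of its clause (e) are good. Setting $B_\alpha=\bigcup_{s\in I}A^s_\alpha$ we get $B=\bigcup_\alpha B_\alpha$ with each $B_\alpha$ a complete stable set of size $<\lambda$. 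I would then build an increasing continuous chain of models $M_\alpha$ ($\alpha<\lambda$) with $B_\alpha\subseteq M_\alpha$, $P^{M_\alpha}=P^{B_\alpha}$, $M_\alpha$ locally constructible over $B_\alpha$ and sitting at the top of a good $\cP(n)$-system, and --- crucially --- with $M_\alpha\ind_{B_\alpha}B'$ for every piece $B'$ that will later be placed above $M_\alpha$, which we secure by always realizing stationarizations (Lemma \ref{20}); limit stages are unions. The heart of the matter is the successor step: given $M_\alpha$, I would take the good system produced by clause (e) of Theorem \ref{small} with $\beta=\alpha+1$ and extend it by placing $M_\alpha$ at its one missing index $\{0,\dots,n-1\}$; using the level-$\alpha$ good $\cP(n)$-system and $M_\alpha\ind_{B_\alpha}B_{\alpha+1}$ one checks that this is a good $\cP^-(n+1)$-system, of size $<\lambda$, with union exactly $M_\alpha\cup B_{\alpha+1}$. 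The induction hypothesis, applied to this $\cP^-(n+1)$-system, then delivers $M_{\alpha+1}\supseteq M_\alpha\cup B_{\alpha+1}$ locally constructible over $M_\alpha\cup B_{\alpha+1}$ and itself at the top of a good system.

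It then remains to see that $M:=\bigcup_\alpha M_\alpha$ is a model of $T$ locally constructible over all of $B$. It is a model because each $M_{\alpha+1}$ is, and $P^M=P^B$ because local constructions over complete sets do not enlarge the $P$-part (Lemma \ref{lem:isolated_star} and the definition of $S_*$). For local constructibility over $B$ I would concatenate the constructions of the successive $M_{\alpha+1}\setminus M_\alpha$, but in order to make them coherent over all of $B$ rather than over the pieces $M_\alpha\cup B_{\alpha+1}$, I would replace each newly constructed element by a realization of the \emph{stationarization} of its type over the current, larger base; the key fact here is that the stationarization of a locally isolated $S_*$-type over a stable set, taken over a $\subseteq_t$-extension, is again locally isolated, which follows from Theorem \ref{4.1}, Lemma \ref{lem:isolated_star}, and transitivity of $\subseteq_t$. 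For the ``moreover'' part one notes that $\tp(\bar c/B)$ is definable over a subset of $B$ of size $\le|T|$, hence over some $B_{\alpha_0}$ with $\alpha_0<\lambda$ (if $\lambda$ has countable cofinality one instead carries $\bar c$ along the whole chain, after a short check that $\tp(\bar c/B_\alpha)\in S_*(B_\alpha)$); one then starts the chain at $\alpha_0$ with $M_{\alpha_0}$ locally constructible over $B_{\alpha_0}\cup\bar c$ via the induction hypothesis (``moreover'' case, legitimate since $|B_{\alpha_0}|<\lambda$) and proceeds as before, obtaining $M$ locally constructible over $B\cup\bar c$.

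The step I expect to be the main obstacle is the successor step: one must recognize ``the model built so far together with the next level of the decomposition'' as the union of a good system of strictly smaller cardinality, so that the induction hypothesis applies. This is exactly what forces the strengthened induction hypothesis (that $M_\alpha$ sits at the top of a good $\cP(n)$-system) and makes clause (e) of Theorem \ref{small} --- together with the stability of the relevant unions, which is only guaranteed by the Hypothesis of this section --- indispensable. A secondary, more routine but still delicate point is the bookkeeping that keeps the local constructions coherent over the whole base $B$ along the chain, handled by the stationarization trick above.
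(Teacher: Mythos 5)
Your proposal follows essentially the same route as the paper: induction on $\lambda=\sum_{s}|A_s|$, base case via the countable local construction, decomposition by Theorem \ref{small} (clause (e)), and a successor step that recognizes $M_\alpha\cup B_{\alpha+1}$ as the union of a good $\cP^{-}(n+1)$-system of smaller cardinality to which the induction hypothesis applies. The only cosmetic difference is that the paper's Fact \ref{47} renders your ``stationarization replacement'' unnecessary --- the very same construction sequence is already a local construction over the enlarged base, since the types of the constructed elements over it are exactly the stationarizations.
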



We have the following easy fact:
\begin{fact}\label{47}
If $B$ is 
locally constructible over $A$, $A\subseteq _tC$,
then $B$ is 
locally constructible over $C$ by the same
sequence, $A\cup B\subseteq _t C\cup B$, and $\tp(B/A) \vdash \tp(B/C)$.
\end{fact}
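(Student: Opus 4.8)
The plan is to prove all three conclusions at once by transfinite induction along a local construction witnessing that $B$ is locally constructible over $A$. Fix an enumeration $\langle d_i : i<\alpha\rangle$ with $B = A\cup\{d_i:i<\alpha\}$ and $\tp(d_i/A\cup\{d_j:j<i\})$ locally isolated for each $i$. For $i\le\alpha$ put $B_i = A\cup\{d_j:j<i\}$ and $C_i = C\cup\{d_j:j<i\}$. I will show by induction on $i$ that (a) $B_i\subseteq_t C_i$, (b) $\langle d_j:j<i\rangle$ is a local construction over $C$ (i.e. $\tp(d_j/C_j)$ is locally isolated for all $j<i$), and (c) $\tp(\langle d_j:j<i\rangle/A)\vdash\tp(\langle d_j:j<i\rangle/C)$. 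Taking $i=\alpha$ gives exactly the three assertions of the Fact: (b) says $B$ is locally constructible over $C$ by the same sequence, (a) says $A\cup B\subseteq_t C\cup B$, and (c) is the domination statement.

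The two tools I would isolate first are the following. \emph{Lemma A}: if $A\subseteq_t C$ and $p=\tp(\bar d/A)$ is locally isolated over $A$, with $\theta_\varphi(\bar x,\bar a_\varphi)\in p$ (parameters $\bar a_\varphi\in A$) isolating $p\rest\varphi$ for each formula $\varphi$, then the \emph{same} $\theta_\varphi(\bar x,\bar a_\varphi)$ isolates $\tp(\bar d/C)\rest\varphi$; consequently $\tp(\bar d/C)$ is locally isolated over $A$ and $\tp(\bar d/A)\vdash\tp(\bar d/C)$. The key point is that ``$\theta_\varphi(\bar x,\bar a_\varphi)$ is consistent with both $\varphi(\bar x,\bar y)$ and $\neg\varphi(\bar x,\bar y)$'' is expressed by a formula in $\bar y$ with parameters in $A$; by $A\subseteq_t C$ it cannot hold of some $\bar c\in C$ without holding of some $\bar c'\in A$, which is impossible since $\theta_\varphi(\bar x,\bar a_\varphi)$ decides every $\varphi(\bar x,\bar c')$ with $\bar c'\in A$. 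Hence $\theta_\varphi(\bar x,\bar a_\varphi)$ decides $\varphi(\bar x,\bar c)$ for all $\bar c\in C$, necessarily in agreement with $\bar d$ because $\models\theta_\varphi(\bar d,\bar a_\varphi)$; and since every formula of $\tp(\bar d/C)$ is then implied by some $\theta_\varphi(\bar x,\bar a_\varphi)\in\tp(\bar d/A)$, domination follows. \emph{Lemma B}: if $A\subseteq_t C$ and $\tp(\bar d/A)\vdash\tp(\bar d/C)$, then $A\bar d\subseteq_t C\bar d$. Given $\models\psi(\bar c,\bar a,\bar d)$ with $\bar c\in C$, $\bar a\in A$, domination gives $\tp(\bar d/A)\vdash\psi(\bar c,\bar a,\bar x)$, so by compactness some $\chi(\bar x,\bar a')\in\tp(\bar d/A)$ satisfies $\models\forall\bar x(\chi(\bar x,\bar a')\to\psi(\bar c,\bar a,\bar x))$; this formula has free variable in the sort of $C$ and all parameters in $A$, so $A\subseteq_t C$ produces $\bar c'\in A$ with $\models\forall\bar x(\chi(\bar x,\bar a')\to\psi(\bar c',\bar a,\bar x))$, and $\models\chi(\bar d,\bar a')$ then yields $\models\psi(\bar c',\bar a,\bar d)$.

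Granting these, the induction is routine. The base case $i=0$ is the hypothesis $A\subseteq_t C$, with (b), (c) empty. At a limit $i$, (a) holds by the finite character of $\subseteq_t$ and the instances $B_j\subseteq_t C_j$ for $j<i$; (b) is immediate; and (c) holds since any $\varphi\in\tp(\langle d_j:j<i\rangle/C)$ mentions only variables $x_j$ with $j<j'$ for some $j'<i$, hence lies in $\tp(\langle d_j:j<j'\rangle/C)$, which by (c) at $j'$ is implied by $\tp(\langle d_j:j<j'\rangle/A)\subseteq\tp(\langle d_j:j<i\rangle/A)$. For the successor $i\to i+1$: $\tp(d_i/B_i)$ is locally isolated by hypothesis and $B_i\subseteq_t C_i$ by (a), so Lemma A gives that $\tp(d_i/C_i)$ is locally isolated (this is (b) for $i+1$) and that $\tp(d_i/B_i)\vdash\tp(d_i/C_i)$; then Lemma B, applied with $B_i,C_i,d_i$ in the roles of $A,C,\bar d$, gives $B_{i+1}\subseteq_t C_{i+1}$ (this is (a) for $i+1$). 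For (c) at $i+1$: given $\models\chi(\langle d_j:j<i\rangle,d_i,\bar c)$ with $\bar c\in C$, use $\tp(d_i/B_i)\vdash\tp(d_i/C_i)$ to get $\eta(\langle x_j:j<i\rangle,x_i,\bar a)\in\tp(d_i/B_i)$ (with $\bar a\in A$) implying $\chi(\langle d_j:j<i\rangle,x_i,\bar c)$; then $\forall x_i(\eta\to\chi)$ is over $A\cup C$ and true of $\langle d_j:j<i\rangle$, so (c) at $i$ gives $\tp(\langle d_j:j<i\rangle/A)\vdash\forall x_i(\eta\to\chi)$, and adjoining $\eta\in\tp(\langle d_j:j\le i\rangle/A)$ yields $\tp(\langle d_j:j\le i\rangle/A)\vdash\chi$.

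I expect the main obstacle to be the successor step for clause (c): since $\tp(d_i/C_i)$ is only \emph{locally} isolated rather than isolated by a single formula, the one-step domination $\tp(d_i/B_i)\vdash\tp(d_i/C_i)$ has to be fed through the inductive hypothesis on (c) by the implication chase above, and this is the one genuinely non-mechanical point. Everything else is bookkeeping — chiefly checking that in both Lemmas A and B the formula to which $\subseteq_t$ is applied really has its free variable in the sort of $C$ and its parameters in $A$ (in Lemma B the ``outer'' parameters $\bar c$ are harmless, since the definition of $\subseteq_t$ permits parameters from $C$).
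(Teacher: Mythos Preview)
Your proof is correct. The paper gives no argument for this statement at all --- it is stated as an ``easy fact'' and immediately used in the proof of Theorem~\ref{2.5} --- so there is nothing to compare against; your Lemmas A and B together with the transfinite induction along the construction are exactly the natural unpacking.

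One small cosmetic point: your closing parenthetical about Lemma B is slightly misleading. In the formula $\forall\bar x\,(\chi(\bar x,\bar a')\to\psi(\bar y,\bar a,\bar x))$ to which you apply $A\subseteq_t C$, the element $\bar c$ plays the role of the free variable $\bar y$ being ``pulled down'' to $A$, not of a parameter; the genuine parameters $\bar a,\bar a'$ already lie in $A$. So no special permission for parameters from $C$ is needed, and your argument stands as written.
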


\begin{proof} (of Theorem \ref{2.5}) We prove the theorem by induction
on $\lambda=\sum |A_s|$. Remember $I={\cal P}^-(n)$.

For $\lambda\leq\aleph _0$, we inductively define an $\omega$-sequence
$\langle a_i:i<\omega\rangle$ such that $tp(\bar c^\frown\langle a_0,\dots
,a_i\rangle /\cup A_s)$ is
locally isolated.

For $\lambda > \aleph _0$ choose $A^s_\alpha, \alpha <\lambda$ as in
Theorem \ref{45} such that $tp(\bar c/\bigcup _sA_s)$ is definable over
$\bigcup A_0^s$. We define $A^{\{0,\dots ,n-1\}}_\alpha$ by induction
on $\alpha$ such that it is increasing continuously, $A^{\{0,\dots
,n-1\}}_0$ is 
locally constructible over $\bigcup
\{A^s_0:s\in I\}$ and for $\alpha <\lambda, A^{\{0,\dots
,n-1\}}_{\alpha +1}$ is 
locally constructible over
$A^{\{0,\dots ,n-1\}}_\alpha\cup\{ A^s_{\alpha +1}:s\in I\}$. \blue{$|T|$ is countable, so use the case $\lam = \aleph_0$}

Since (Fact \ref{47})

$$ tp_*(A^{\{0,\dots ,n-1\}}_{\alpha +1}/\bigcup _s A^s_{\alpha
+1}\cup A^{\{0,\dots ,n-1\}}_\alpha)\vdash tp_*((A^{\{0,\dots
,n-1\}}_{\alpha +1}/\bigcup _s A_s)$$

and

$$tp_*(A^{\{0,\dots ,n-1\}}_0/\bigcup _s A^s_0\cup\bar c)\vdash
tp_*(A^{\{0,\dots ,n-1\}}_0/\bigcup _s A_s\cup\bar c)$$

we have

$$tp_*(A^{\{0,\dots ,n-1\}}_\alpha /\bigcup _s A^s_\alpha \cup\bar
c)\vdash
tp_*(A^{\{0,\dots ,n-1\}}_\alpha /\bigcup _s A_s\cup\bar c)$$.

Let for $s\in {\cal P}^-(n+1)$,

\[ B^s_\alpha =\left\{ \begin{array}{ll}
  A^s_\alpha &\mbox{if $n\notin s,s\neq\{ 0,\ldots ,n-1\}$}\\
  A^{s\setminus\{ n\}}_{\alpha +1} &\mbox {if $n\in s$}
                         \end{array}
                         \right. \]

Note that in the notation  of Theorem \ref{small} for $I={\cal
P}^-(n)$, $B^s_\alpha= B^s_{\{\alpha ,\alpha +1\}}$.
So $\langle B^s_\alpha:s\in {\cal P}^-(n+1)\rangle$ is a good
system.

We use the induction hypothesis on $\lambda$ to carry the induction
step from $\alpha$ to $\alpha +1$.
\end{proof}

\begin{co} (T countable) Assume that $T$ is $n$-stable over $P$ for all $n < \om$. Then $T$ has $n$-existence for all $n<\om$. Moreover: let  $\langle A_s:s\in {\cal P}^-(n)\rangle$ be a good $\cP^-(n)$-system. Then its union $B = \bigcup_{s \in  \cP^-(n)} A_s$ has the locally constructible existence property, 
\end{co}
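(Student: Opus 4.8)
The plan is to prove Theorem \ref{2.5} by induction on $\lambda = \sum_{s \in I} |A_s|$, mirroring the ``baby example'' from the introduction but carried out uniformly for all $n$ at once. The base case $\lambda \le \aleph_0$ reduces essentially to Lemma \ref{lem:count-li}: the union $B = \bigcup_{s \in I} A_s$ is a countable stable set (it is stable by $n$-stability, or really by the global Hypothesis of the section together with clause (vi)), and $B \bar c$ is again countable and stable — in the ``moreover'' case we first absorb $\bar c$, noting $B\bar c$ is stable by an argument like Lemma \ref{lem:fin-stable}, using that $\tp(\bar c/B) \in S_*(B)$ so $B\bar c$ is complete. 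Then one runs the construction of Lemma \ref{lem:count-li}: enumerate all formulas $\varphi_i(x,\a_i)$ over $B\bar c$ and repeatedly apply Lemma \ref{lem:li}(2) to build a local construction $\langle a_i : i<\omega\rangle$ with $\tp(\bar c^\frown\langle a_0,\dots,a_i\rangle / B) \in S_*(B)$ locally isolated, so the resulting model $M \supseteq B\bar c$ is locally constructible over $B \cup \bar c$ and has $P^M = P^B$.

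For the inductive step with $\lambda > \aleph_0$, I would invoke Theorem \ref{45} to write the given good $\cP^-(n)$-system as an increasing continuous union of good $\cP^-(n)$-systems $\langle A^s_\alpha : s \in I\rangle$ of size $<\lambda$, chosen (as permitted there) so that $\tp(\bar c/\bigcup_s A_s)$ is definable over $\bigcup_s A^s_0$. One then builds, by induction on $\alpha<\lambda$, an increasing continuous chain $A^{\{0,\dots,n-1\}}_\alpha$ with $A^{\{0,\dots,n-1\}}_0$ locally constructible over $\bigcup_s A^s_0 \cup \bar c$ and $A^{\{0,\dots,n-1\}}_{\alpha+1}$ locally constructible over $A^{\{0,\dots,n-1\}}_\alpha \cup \bigcup_s A^s_{\alpha+1}$. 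At each successor stage one applies the induction hypothesis to the good $\cP^-(n+1)$-system $\langle B^s_\alpha : s \in \cP^-(n+1)\rangle$ produced from the $A^s_\alpha$, $A^s_{\alpha+1}$ and $A^{\{0,\dots,n-1\}}_\alpha$ exactly as in clause (e) of Theorem \ref{45} — this is a good system of size $<\lambda$, so the hypothesis supplies the next locally constructible layer. The key point making the layers assemble into a global construction is Fact \ref{47}: since $\bigcup_s A^s_\alpha \subseteq_t \bigcup_s A_s$ (by Remark \ref{32.5}, as the smaller system is $\prec$ the larger), local constructibility of a layer over the $\alpha$-th approximation implies local constructibility over the full union, and the isolation statements $\tp_*(\cdot) \vdash \tp_*(\cdot)$ propagate. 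Taking unions over $\alpha$, $M = \bigcup_\alpha A^{\{0,\dots,n-1\}}_\alpha$ is locally constructible over $\bigcup_s A_s \cup \bar c$ with the correct $P$-part.

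Finally, for the stated Corollary: $n$-existence is immediate, since a locally constructible model $M$ over $B = \bigcup_{s \in \cP^-(n)} A_s$ with $P^M = P^B$ furnishes exactly the top set $A_n$ needed to extend the $\cP^-(n)$-system to a good $\cP(n)$-system — the relevant clauses (iii), (iv) hold by construction, (vi)–(viii) reduce to facts already established (Corollary \ref{33}, the definability in Lemma \ref{32}, and clause (vii) follows from $B$ being complete, hence from completeness of $M \supseteq B$ via Observation \ref{obs:complete}), and (v) is the $\prec$-statement handled by the induction inside Theorem \ref{45}. The ``moreover'' is literally the $\bar c = \emptyset$ case of Theorem \ref{2.5} applied to $\cP^-(n)$.

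The main obstacle I anticipate is the bookkeeping at the successor stage: verifying that the system $\langle B^s_\alpha\rangle$ really is a good $\cP^-(n+1)$-system (this is clause (e) of Theorem \ref{45}, so it is available, but one must correctly identify the indexing $B^s_\alpha = B^s_{\{\alpha,\alpha+1\}}$), and checking that local constructibility transfers cleanly across the $\subseteq_t$ extension at limit and successor stages so that the chain of types $\tp_*(A^{\{0,\dots,n-1\}}_\alpha/\bigcup_s A^s_\alpha \cup \bar c) \vdash \tp_*(A^{\{0,\dots,n-1\}}_\alpha/\bigcup_s A_s \cup \bar c)$ holds at every $\alpha$, not merely in the limit. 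This is precisely where Fact \ref{47} and the transitivity of $\subseteq_t$ do the work, and where one must be careful that ``$P$-part does not grow'' is preserved — which follows because every successive type is in $S_*(\cdot)$, i.e. weakly orthogonal to $P$.
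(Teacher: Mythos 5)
Your proposal is correct and follows essentially the same route as the paper: Theorem \ref{2.5} is proved by induction on $\lambda=\sum_s|A_s|$, with the countable base case handled by the local-isolation construction of Lemma \ref{lem:count-li}/Lemma \ref{lem:li}, the successor step handled by applying the induction hypothesis to the good ${\cal P}^-(n+1)$-system supplied by clause (e) of Theorem \ref{45}, and the layers glued via Fact \ref{47}; the Corollary is then read off exactly as you describe. The only quibble is a citation: $\bigcup_s A^s_\alpha\subseteq_t\bigcup_s A_s$ comes from applying Remark \ref{32.5} to the $J$-system of Theorem \ref{45}(e) (plus transitivity), not directly to the original system, but this does not affect the argument.
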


\begin{co}
 (T countable). Assume that $T$ is $n$-stable over $P$  for all $n < \om$. Then $T$ has the Gaifman property. Moreover, every $N\models T^P$ has the locally constructible existence property.  
\end{co}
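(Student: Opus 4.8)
The plan is to reduce this immediately to the case $n=1$ of the preceding corollary. Given $N\models T^P$, we may assume $N\prec \cC^P = P^\cC$, replacing $N$ by an elementary copy inside the monster model of $T^P$. Then the one-element sequence $\langle A_\emptyset\rangle$ with $A_\emptyset = N$, indexed by $I = \cP^-(1) = \set{\emptyset}$, is a good $\cP^-(1)$-system: the only clause of Definition \ref{system} carrying content for this index set is (iii)(a), namely $A_\emptyset \prec P^\cC$, which holds by assumption, while clauses (vi), (vii), (viii) and the remaining ones are vacuous or immediate. Its union is $N$, which is a complete subset of $\cC$ (e.g.\ by Corollary \ref{33}(i), or directly from Fact \ref{obs:complete_characterization} together with Hypothesis \ref{hyp:1}).

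Now apply the preceding corollary with $n=1$: the union of this good $\cP^-(1)$-system, i.e.\ $N$ itself, has the locally constructible existence property. Hence there is $M\models T$ with $P^M \subseteq N \subseteq M$ and $M$ locally constructible over $N$. This is already the ``moreover'' assertion, once we note that in fact $P^M = N$ and not merely $P^M\subseteq N$: since $N\subseteq M$ and $N\subseteq P^\cC$, we get $N\subseteq M\cap P^\cC = P^M$, which together with $P^M\subseteq N$ gives $P^M = N = P^N$. So $M$ simultaneously witnesses the existence property and the locally constructible existence property for $N$. Since $N\models T^P$ was arbitrary, every model of $T^P$ has the locally constructible existence property, which by Definition \ref{de:existence}(2) is exactly the statement that $T$ has the Gaifman property.

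I expect essentially no obstacle here: all the real work lies in Theorem \ref{2.5} and its corollary, which construct a locally constructible model over the union of an \emph{arbitrary} good $\cP^-(n)$-system, so the Gaifman statement is genuinely just the specialization $n=1$. The only points that deserve a sentence are the (trivial) observation that a single $N\prec P^\cC$ constitutes a good $\cP^-(1)$-system, the completeness of $N$ as a subset of $\cC$, and the one-line remark that $P^M\subseteq N\subseteq M\subseteq\cC$ with $N\subseteq P^\cC$ forces $P^M = N$.
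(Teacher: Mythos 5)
Your proposal is correct and matches the paper exactly: the paper itself disposes of this corollary with the single remark that it is the case $n=1$ of the preceding corollary, and your verification that a single $N\prec P^\cC$ forms a good $\cP^-(1)$-system and that $P^M\subseteq N\subseteq M$ with $N\subseteq P^\cC$ forces $P^M=N$ is just the (correct) filling-in of the details the paper leaves implicit.
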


The last Corollary is just a particular case of the previous one with $n=1$.

\medskip

In light of the results above, we restate our generalized version of Gaifman's Conjecture:

\begin{con}
 (T countable) Assume that $T$ is $n$-unstable over $P$ for some $n$. Then for every regular cardinal $\lam$ big enough, and every $\mu \ge \lam$, $T$ has $2^\lam$ models of cardinality $\mu$, which are non-isomorphic over $P$.  
\end{con}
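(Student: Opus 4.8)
The plan is to run the non-structure machinery of \cite{Sh234} --- where ``many non-isomorphic models over $P$'' was extracted from instability of uniformly locally atomic $n$-systems --- but with \emph{local constructibility} (Theorem~\ref{2.5}) playing the role that u.l.a.\ played there, so that it applies to the general good systems of Section~5. Fix the least $n$ for which $T$ is $n$-unstable over $P$ and a good $\cP^-(n)$-system $\mathcal S=\langle A_s:s\in\cP^-(n)\rangle$ whose union $B=\bigcup_{s}A_s$ is not stable over $P$; by Lemma~\ref{31}(i) we may take $\mathcal S$ (hence $B$) saturated of some size $\lambda>\aleph_0$. Since $B$ is complete (Corollary~\ref{33}(i)) and unstable, Fact~\ref{fct:satstable} gives $|S_*(B)|=2^{\lambda}$; the content of this, in definable terms, is an ``instability configuration weakly orthogonal to $P$'': a formula $\varphi(\bar x,\bar y)$, tuples $\langle\bar b_i:i<\lambda\rangle$ from $B$, and for each cut a type $p\in S_*(B)$ separating the $\bar b_i$ according to that cut. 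The point of phrasing instability through $S_*$ is precisely that these separating configurations can be realised \emph{off} $P$, so the models we build will differ \emph{over} $P$ and not merely as abstract $L$-structures.

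First I would build, for each linear order $\eta$ (of size $\le\mu$), a model $M_\eta\models T$ with a fixed $P$-part $N\models T^P$ of size $\lambda$, by transfinite recursion along $\eta$: at each step the current approximation is (the union of) a good system, and one glues on a fresh copy of the instability configuration indexed by the next point of $\eta$, realising the appropriate $p\in S_*$ over a locally constructible extension. Here the systems formalism does all the bookkeeping: clause~(vii) of Definition~\ref{system} and Lemma~\ref{1.4} make the new material $\subseteq_t$-independent from, and weakly orthogonal to, $P$ over the previous stage, so $P^{M_\eta}=N$ throughout; Corollary~\ref{33}(ii) amalgamates the gluing maps coherently; $n$-existence (the corollaries to Theorem~\ref{2.5}) supplies the required extensions; and local constructibility --- hence the $P$-part staying $N$ --- is preserved in unions by Fact~\ref{47}. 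For $\mu>\lambda$ one takes $\eta$ of size $\mu$ and, if needed, pads with a $\mu$-saturated part orthogonal to $P$.

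Next I would prove a recovery lemma: from $M_\eta$ together with its distinguished $P$-part $N$ one can read off invariants --- in the spirit of the ``dimensions'' of Ehrenfeucht--Mostowski-type models, e.g.\ the family of cuts of the $\bar b_i$-sequence that are actually realised by $S_*$-types inside $M_\eta$ --- that determine $\eta$ up to an equivalence coarse enough to still admit $2^{\lambda}$ pairwise inequivalent witnesses among linear orders of size $\lambda$ (using that there are $2^{\lambda}$ pairwise non-embeddable such orders, or the sharper stationary-set constructions of the first author when $\lambda$ is regular and large, under a set-theoretic hypothesis of the same ``consistent non-structure'' flavour already present in \cite{Sh234}). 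An isomorphism $M_\eta\to M_{\eta'}$ over $N$ then forces $\eta\sim\eta'$, so $\{M_\eta\}$ yields $2^{\lambda}$ models of cardinality $\mu$ pairwise non-isomorphic over $P$.

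The hard part, and the reason this remains a conjecture, is the recovery step. In \cite{Sh234} the u.l.a.\ hypothesis pins the models $A_u$ down tightly enough that the relevant dimensions are canonical; with only local constructibility we have much weaker control over $M_\eta$, so isolating invariants that are genuinely invariant under isomorphism-over-$P$ --- and doing so uniformly for \emph{every} $n$, since the instability may surface only at a high level --- demands a substantially more robust analysis of how $S_*$-types over unions of good subsystems amalgamate and propagate through a locally constructible extension. Carrying this out, and matching it with a large enough supply of indexing structures, is the content of the projected sequel.
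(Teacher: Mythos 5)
The statement you are trying to prove is not a theorem of the paper: it is stated as a \emph{conjecture}, the paper offers no proof of it, explicitly calls proving it ``as stated'' challenging, proposes a strictly weaker variant (and an even weaker ``forcing-extension'' version) as more realistic goals, and defers all of this to subsequent work. So there is no proof in the paper to compare yours against, and your own text concedes the same thing: what you have written is a programme, not an argument. It is, to be fair, essentially \emph{the} programme the authors themselves advertise in the ``Previous work'' subsection --- generalize the non-structure machinery of \cite{Sh234} from u.l.a.\ systems to the general good systems of Section 5 --- so the outline is aligned with their intentions; but alignment with an announced plan is not a proof.

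Beyond the recovery step you already flag, two further gaps deserve naming. First, the passage from ``$B$ is unstable, so $|S_*(B)|=2^{\lambda}$ for saturated $B$'' to a formula-level instability configuration that can be indexed by arbitrary linear orders \emph{and} realised repeatedly while keeping the $P$-part fixed is itself unestablished: weak orthogonality to $P$ of each individual type in $S_*(B)$ does not automatically persist when you stack $\mu$ many realisations on top of one another, and showing that each successive approximation is again (the union of) a good system --- so that Lemma \ref{1.4} and Corollary \ref{33}(ii) apply at limit and successor stages --- is precisely the hard bookkeeping, not something clause (vii) of Definition \ref{system} hands you for free. Second, the conjecture as stated is a ZFC claim for every sufficiently large regular $\lambda$, whereas your fallback to ``stationary-set constructions \ldots under a set-theoretic hypothesis'' would at best yield the consistent (forcing-extension) version, which the paper explicitly separates out as a distinct and weaker goal. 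As it stands your proposal should be read as a reasonable research plan for the open problem, not as a proof of the statement.
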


Proving the last conjecture as stated may be challenging. We therefore state a weaker version of it, which, we believe, may be more attainable:

\begin{con}
 (T countable) Assume that $T$ is $m$-stable over $P$ for all $m<n$, but $n$-unstable for some $n$. Then for every regular cardinal $\lam$ big enough, and every $\mu \ge \ka = \lam^{+n}$, $T$ has $2^\ka$ models of cardinality $\mu$, which are non-isomorphic over $P$.  
\end{con}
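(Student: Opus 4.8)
The plan is to prove this as a non-structure theorem in the spirit of the first author's classical many-models machinery \cite{Sh:c} and of his treatment of unstable u.l.a.\ systems in \cite{Sh234}, but carried out \emph{relative to $P$}. The first step is to upgrade ``$n$-instability'' to a sharp dichotomy using the moreover clause of Fact \ref{fct:satstable}: since $T$ is $n$-unstable over $P$, fix a good $\cP^-(n)$-system whose union is unstable, pass to a saturated $B' \equiv \bigcup_s A_s$ of a suitable size $\theta = \theta^{<\theta} > |T|$, and conclude $|S_*(B')| = 2^\theta$ (it cannot exceed $2^\theta$, and by the moreover clause anything below $2^\theta$ would mean stability). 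The decisive point is that all of these $2^\theta$ types keep $P^{B'}$ fixed and are weakly orthogonal to $P$, so realizing \emph{any} family of them yields extensions with the same $P$-part. Thus instability at level $n$ supplies, over every sufficiently saturated good $\cP^-(n)$-system, a maximal stock of pairwise-incompatible ways to extend while leaving $P$ untouched; this is the ``coding fuel''.

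Second, I would harvest the stability at the lower levels. Because $T$ is $m$-stable over $P$ for all $m<n$, the machinery of Sections 5--6 applies up to level $n-1$: unions of good systems of level $\le n-1$ are complete and stable, so $m$-existence and the locally constructible / $\lam$-primary theory (Theorem \ref{2.5}, Corollary \ref{33}, Lemma \ref{isolatedimplies}) are available for $m<n$, each invocation resting only on stability of good $\cP^-(k)$-systems with $k \le m < n$. This furnishes the amalgamation scaffolding needed both to assemble large good $\cP^-(n)$-systems and, at the very end, to complete them to genuine models of $T$ without enlarging the $P$-part. Using the Löwenheim--Skolem theorem for systems (Theorem \ref{small}), I would build for the target $\ka = \lam^{+n}$ a continuous increasing chain $\langle {\cal S}_i : i < \ka \rangle$ of good $\cP^-(n)$-systems of size $<\ka$ whose union ${\cal S}$ has size $\ka$. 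The exponent ``$+n$'' should be read as the cumulative cost of the $n$ nontrivial coordinates of a $\cP^-(n)$-system: the inductive transfer of Subsection 2.4 (a property of $(k{+}1)$-systems in $\lam$ descending to $k$-systems in $\lam^+$) is run in reverse for non-structure, so an instability witnessed at the $\cP^-(n)$-level in $\lam$ is forced to manifest as many non-isomorphic \emph{models} only after climbing $n$ successor cardinals; the regularity of $\lam$, hence of $\ka$, is exactly what makes the continuous chain and the attendant club/stationary combinatorics go through.

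Third comes the coding. For each $S \subseteq \ka$ I would build a model $M_S \models T$ with a fixed $P$-part $N$ (independent of $S$): along the chain, at stage $i$ the union of ${\cal S}_i$ is a saturated good $\cP^-(n)$-system to which, by Step 1, there attach two incompatible weakly-orthogonal-to-$P$ types $p_i^0, p_i^1$; realize $p_i^1$ if $i \in S$ and $p_i^0$ otherwise, then close off using $m$-existence for $m<n$. As every type used is weakly orthogonal to $P$, the $P$-part is never enlarged, so $P^{M_S} = N$ for all $S$; finally pad by a saturated extension orthogonal to $P$ to reach any prescribed $\mu \ge \ka$ without disturbing the coding. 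The separation is that $S \mapsto M_S$ descends to an injection on $\cP(\ka)$ modulo the nonstationary ideal: for a club of $i$ the bit ``$i \in S$'' is recoverable from $M_S$ up to isomorphism over $P$ (via a back-and-forth invariant detecting which of $p_i^0, p_i^1$ is realized at the $i$-th configuration), so $M_S \cong_P M_{S'}$ forces $S \triangle S'$ nonstationary. Since $\cP(\ka)$ has $2^\ka$ classes pairwise inequivalent modulo the nonstationary ideal, this produces $2^\ka$ models of cardinality $\mu$ pairwise non-isomorphic over $P$.

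The main obstacle is precisely this coding step, in two respects. First, one must convert the counting statement $|S_*(B')| = 2^\theta$ into a genuinely \emph{definable} order (or tree) property over a weakly-orthogonal-to-$P$ configuration that is \emph{compatible with the good-system structure}, so that the ordered/independent family witnessing instability can be threaded through the chain while clauses (ii)--(viii) of Definition \ref{system} are preserved at every stage. Second, one must ensure the $\ka$-many binary choices are truly \emph{independent} and that the recovered bit is an honest isomorphism-over-$P$ invariant, robust under the back-and-forth; this is where the interaction between the top-level ($n$) instability and the lower-level amalgamation is most delicate, and it is exactly the generalization --- from the u.l.a.\ systems of \cite{Sh234} to the general good systems developed here --- that remains to be carried out. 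Securing a clean invariant (rather than merely many models), together with the requisite combinatorics at the regular cardinal $\ka = \lam^{+n}$, is where the real difficulty lies, which is why the statement is offered as a conjecture rather than a theorem.
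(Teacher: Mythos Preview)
The statement you are addressing is a \emph{Conjecture} in the paper, not a theorem: the authors give no proof and explicitly write that they ``plan to address these questions in subsequent work.'' So there is no ``paper's own proof'' to compare your proposal against.

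What you have written is not a proof either, and to your credit you say so in your final paragraph. Your outline is a reasonable heuristic roadmap for the non-structure direction and is broadly consonant with the hints the paper itself gives (adapt the many-models machinery of \cite{Sh:c} and the u.l.a.\ non-structure arguments of \cite{Sh234} to the general good systems defined here, use $m$-stability for $m<n$ to supply the amalgamation scaffolding, and code stationary sets via weakly-orthogonal-to-$P$ types). But several steps are programmatic rather than established. In particular: (a) your appeal to Theorem \ref{2.5} and the locally-constructible existence machinery for $m<n$ is not justified as stated, since the proof of Theorem \ref{2.5} in the paper is carried out under the hypothesis of $n$-stability for \emph{all} $n$, and you would need to check carefully that only levels $\le n-1$ are actually used when building and completing a $\cP^-(n)$-system; (b) upgrading $|S_*(B')|=2^\theta$ to a definable order/tree property that is compatible with the good-system clauses is exactly the hard part, as you note, and nothing in the present paper provides this; and (c) the invariant-recovery step (that the bit ``$i\in S$'' is an isomorphism-over-$P$ invariant on a club) is asserted rather than argued. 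These are precisely the reasons the authors left the statement as a conjecture; your sketch identifies the right ingredients but does not close the gaps.
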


An even more ``reasonable'' goal may be proving that for every regular $\lambda$ (perhaps $\lambda = \lambda^{<\lambda}$), there is a forcing extension of the universe that does not collapse any cardinals, in which we have non-structure as stated in the last Conjecture. We plan to address these questions in subsequent work.



\bibliography{common.bib}

\def\cprime{$'$}
\begin{thebibliography}{BPW23}

\bibitem[Afs14]{MR3343525}
Bijan Afshordel.
\newblock Generic automorphisms with prescribed fixed fields.
\newblock {\em J. Symb. Log.}, 79(4):985--1000, 2014.

\bibitem[BPW23]{benedikt-pred}
Michael Benedikt, C\'{e}cilia Pradic, and Christoph Wernhard.
\newblock Synthesizing nested relational queries from implicit specifications.
\newblock In {\em Proceedings of the 42nd ACM SIGMOD-SIGACT-SIGAI Symposium on
  Principles of Database Systems}, PODS '23, page 33–45, New York, NY, USA,
  2023. Association for Computing Machinery.

\bibitem[Gai]{Ga}
Haim Gaifman.
\newblock Characterisations of uniqueness and rigidity properties.
\newblock {\em unpublished note}.

\bibitem[Gai74]{gaifman}
Haim Gaifman.
\newblock Operations on relational structures, functors and classes. {I}.
\newblock In {\em Proceedings of the {T}arski {S}ymposium ({P}roc. {S}ympos.
  {P}ure {M}ath., {V}ol. {XXV}, {U}niv. {C}alifornia, {B}erkeley, {C}alif.,
  1971)}, Proc. Sympos. Pure Math., Vol. XXV, pages 21--39. Published for the
  Association for Symbolic Logic by the American Mathematical Society,
  Providence, RI, 1974.

\bibitem[GK10]{Goodrick_Kolesnikov_2010}
John Goodrick and Alexei Kolesnikov.
\newblock Groupoids, covers, and 3-uniqueness in stable theories.
\newblock {\em The Journal of Symbolic Logic}, 75(3):905–929, 2010.

\bibitem[GKK11]{goodrick2011amalgamationfunctorshomologygroups}
John Goodrick, Byunghan Kim, and Alexei Kolesnikov.
\newblock Amalgamation functors and homology groups in model theory, 2011.

\bibitem[GKK13]{Goodrick_Kim_Kolesnikov_2013}
John Goodrick, Byunghan Kim, and Alexei Kolesnikov.
\newblock Homology groups of types in model theory and the computation of
  h2(p).
\newblock {\em The Journal of Symbolic Logic}, 78(4):1086–1114, 2013.

\bibitem[Hod99]{Hod-cat1}
Wilfrid Hodges.
\newblock Relative categoricity in abelian groups.
\newblock In {\em Models and computability ({L}eeds, 1997)}, volume 259 of {\em
  London Math. Soc. Lecture Note Ser.}, pages 157--168. Cambridge Univ. Press,
  Cambridge, 1999.

\bibitem[Hod02]{Hod-cat3}
Wilfrid Hodges.
\newblock Relative categoricity in linear orderings.
\newblock In {\em Logic and algebra}, volume 302 of {\em Contemp. Math.}, pages
  235--248. Amer. Math. Soc., Providence, RI, 2002.

\bibitem[Hru24]{hrushovski2024groupoidsimaginariesinternalcovers}
Ehud Hrushovski.
\newblock Groupoids, imaginaries and internal covers.
\newblock {\em arXiv: Logic}, 2024.

\bibitem[HY09]{Hod-cat2}
Wilfrid Hodges and Anatoly Yakovlev.
\newblock Relative categoricity in abelian groups. {II}.
\newblock {\em Ann. Pure Appl. Logic}, 158(3):203--231, 2009.

\bibitem[Kir13]{Kirby2013}
Jonathan Kirby.
\newblock Finitely presented exponential fields.
\newblock {\em Algebra and Number Theory}, 7(4):943--980, 2013.

\bibitem[KZ14]{KiZil}
Jonathan Kirby and Boris Zilber.
\newblock Exponentially closed fields and the conjecture on intersections with
  tori.
\newblock {\em Ann. Pure Appl. Logic}, 165(11):1680--1706, 2014.

\bibitem[Mor65]{Mor}
Michael Morley.
\newblock Categoricity in power.
\newblock {\em Trans. Amer. Math. Soc.}, 114:514--538, 1965.

\bibitem[PS85]{PiSh130}
Anand Pillay and Saharon Shelah.
\newblock Classification theory over a predicate, {I}.
\newblock {\em Notre Dame J. Formal Logic}, 26(4):361--376, 1985.

\bibitem[She86]{Sh234}
Saharon Shelah.
\newblock Classification over a predicate, {II}.
\newblock In {\em Around classification theory of models}, volume 1182 of {\em
  Lecture Notes in Math.}, pages 47--90. Springer, Berlin, 1986.

\bibitem[She90]{Sh:c}
S.~Shelah.
\newblock {\em Classification theory and the number of nonisomorphic models},
  volume~92 of {\em Studies in Logic and the Foundations of Mathematics}.
\newblock North-Holland Publishing Co., Amsterdam, second edition, 1990.

\bibitem[SU22]{ShUs322a}
Saharon Shelah and Alexander Usvyatsov.
\newblock Classification over a predicate - the general case, part 1 -
  structure theory.
\newblock {\em arXiv: Logic}, 2022.

\bibitem[Usv24]{Us24}
Alexander Usvyatsov.
\newblock On the existence property over a predicate.
\newblock {\em arXiv: Logic}, 2024.

\end{thebibliography}
\bibliographystyle{alpha}

\Addresses

\end{document}